\documentclass[a4paper,10pt]{scrartcl}
\pdfoutput=1
\usepackage{ifpdf}
\usepackage[english]{babel}
\usepackage[utf8]{inputenc}
\usepackage[T1]{fontenc}
\usepackage{lmodern}
\usepackage{enumerate}
\usepackage{geometry}
\usepackage{amsfonts,amsmath,amssymb,amsopn}
\usepackage{amsthm}
\usepackage{mathtools}
\usepackage{stmaryrd}
\usepackage{nicefrac}
\usepackage{exscale}
\usepackage{empheq}
\usepackage{ mathrsfs }
\usepackage{comment}

\usepackage[numbers,square]{natbib}
\usepackage{url} 
\usepackage[colorlinks=true,pdfpagelabels,unicode]{hyperref}
\hypersetup{linkcolor=blue, urlcolor=violet, citecolor=blue}
\usepackage[dvipsnames,svgnames,table]{xcolor}
\usepackage{orcidlink} 

\numberwithin{equation}{section}
\allowdisplaybreaks 

\theoremstyle{plain}

\newtheoremstyle{theo}
	{3pt} 
	{3pt} 
	{\itshape} 
	{} 
		{\bfseries} 
	{\\} 
	{ } 
	{\thmname{#1}\thmnumber{ #2.}\thmnote{ - #3}} 
\theoremstyle{theo}

\newtheorem{definition}{Definition}[section]
\newtheorem{lemma}[definition]{Lemma}
\newtheorem{theorem}[definition]{Theorem}
\newtheorem{corollary}[definition]{Corollary}
\newtheorem{proposition}[definition]{Proposition}

\newenvironment{bew}{\begin{proof}[\bfseries Proof:]}{\end{proof}}

\newtheoremstyle{remark}
	{3pt} 
	{3pt} 
	{} 
	{} 
		{\bfseries} 
	{} 
	{ } 
	{\thmname{#1}\thmnumber{ #2.}\thmnote{ - #3}} 
\theoremstyle{remark}

\newcommand{\eps}{\varepsilon}

\newcommand{\R}{\mathbb{R}}
\newcommand{\N}{\mathbb{N}}

\newcommand{\f}[2]{\frac{#1}{#2}}
\newcommand{\weps}{w_\eps}
\newcommand{\wepss}{w_{\eps s}}
\newcommand{\wepsss}{w_{\eps ss}}
\newcommand{\wepst}{w_{\eps t}}
\newcommand{\wepsd}{w_{\eps\nu}}
\newcommand{\wepsds}{w_{\eps \nu s}}
\newcommand{\wepsdss}{w_{\eps\nu ss}}
\newcommand{\wepsdt}{w_{\eps\nu t}}
\newcommand{\vp}{\varphi}
\newcommand{\uvp}{\underline{\varphi}}
\newcommand{\ovp}{\overline{\varphi}}
\newcommand{\urho}{\underline{\rho}}
\newcommand{\wew}{\overset{\wedge}{w}}

\newcommand{\tm}{T_{max}} 
\newcommand{\Om}{\Omega} 
\newcommand{\io}{\int\limits_\Omega} 
\newcommand{\bom}{\overline{\Omega}} 
\newcommand{\uw}{\underline{w}} 
\newcommand{\ow}{\overline{w}} 
\newcommand{\uW}{\underline{W}} 
\newcommand{\uc}{\underline c}
\newcommand{\oc}{\overline c}
\newcommand{\ueta}{\underline \eta}
\newcommand{\oeta}{\overline \eta}
\newcommand{\vertiii}[1]{{\left\vert\kern-0.25ex\left\vert\kern-0.25ex\left\vert #1 
    \right\vert\kern-0.25ex\right\vert\kern-0.25ex\right\vert}}

\makeatletter
\def\@fnsymbol#1{\ensuremath{\ifcase#1\or *\or \ddagger\or \#\or
   \mathsection\or \mathparagraph\or \|\or **\or \dagger\dagger
   \or \ddagger\ddagger \else\@ctrerr\fi}}
\makeatother

\author{
Gregor Flüchter\footnote{gmf@mail.upb.de}\ \orcidlink{0009-0008-3208-3687}\\
{\small Institute of Mathematics, Paderborn University,}\\[-5pt]
{\small 33098 Paderborn, Germany}
}
\title{Formation and Behavior of Dirac Singularities in the Parabolic-Elliptic Keller-Segel System in Dimensions \boldmath$n\geq 3$\unboldmath}
\date{}

\begin{document}
\maketitle
\begin{abstract}
\noindent
{\textbf{Abstract.} We consider nonnegative radially symmetric solutions of the parabolic-elliptic Keller-Segel system
\begin{align*}
\left\lbrace
\begin{array}{r@{}l@{\quad}l}
&u_t=\Delta u-\nabla \cdot \big(u\nabla v\big),\\
&0=\Delta v -\mu + u , \\
\end{array}\right. 
\end{align*}
where $\mu$ is the spatial average of $u$, under homogeneous Neumann boundary conditions in a ball in $\mathbb R^n$ for $n\geq 3$.\\
In two dimensions, it is well established that solutions blowing up in finite time converge to a Dirac profile in the vague topology. In contrast, for $n\geq 3$, blow-up solutions with finite existence time do not appear to exhibit such concentration behavior. By generalizing to measure-valued solutions corresponding to accumulated densities of $u$, we extend the analysis beyond the blow-up time. Within this framework, we establish the existence of a minimal solution
\[
u(t)=\theta(t)\delta_0 + \rho(\cdot,t) dx, \qquad t \geq 0,
\]
where $\rho$ is integrable and $\theta$ is increasing and right-continuous. We further construct a class of initial data for which $\theta(t_0)>0$ for some $t_0>0$, thereby establishing the formation of a Dirac mass at the origin. Unlike in the case $n=2$, the singular mass does not jump to a positive level instantaneously; instead, $\theta$ becomes positive continuously. Moreover, $\theta$ is strictly increasing on $[t_0,\infty)$, and the entire mass is asymptotically absorbed at the origin.
}\medskip

{\noindent\textbf{Keywords:} chemotaxis, Keller-Segel system, finite-time blow-up, measure-valued solutions, Dirac mass}

{\noindent\textbf{MSC (2020):} 35B40, 35B44 (primary); 35A01, 35D30, 35K40, 92C17 (secondary)}

\end{abstract}

\newpage
\section{Introduction}\label{sec1:intro}
Chemotaxis, the directed movement of organisms in response to chemical signals, is a fundamental process in biology, governing phenomena such as bacterial aggregation, immune response and cancer metastasis. In 1970, Keller and Segel proposed a seminal model describing this behavior (\cite{Keller1970}), which has since inspired a wide range of mathematical studies (see \cite{Horstmann2003}, \cite{Bellomo2015} and \cite{Bellomo2022} for surveys).\\
We are concerned with the study of the parabolic-elliptic Keller-Segel system
\begin{equation}\label{KS}
    	\left\{ \begin{array}{rcll}
	u_t &=& \Delta u - \nabla \cdot (u\nabla v), 
	\qquad & x\in\Omega, \ t>0, \\[1mm]
	0 &=& \Delta v - \mu + u,
	\qquad & x\in\Omega, \ t>0, \\[1mm]
	& & \hspace*{-10mm}
	\frac{\partial u}{\partial\nu}=\frac{\partial v}{\partial\nu}=0,
	\qquad & x\in\partial\Om, \ t>0, \\[1mm]
	& & \hspace*{-10mm}
	u(x,0)=u_0(x),
	\qquad & x\in\Omega, 
 	\end{array} \right.
\end{equation}
where $\Om=B_R(0) \subset \R^n$ is a ball of radius $R>0$, $n\geq 3$, and $\mu=\frac1{|\Omega|}\int_\Omega u_0$ is the average of initial mass. The initial data shall be assumed to satisfy
\begin{align}\label{init}
0\leq u_0 \in C^0(\bom)~\text{is radially symmetric and satisfies}~\io u_0 = m
\end{align}
for some $m>0$. Note that the normalization $\mu$ can be explicitly computed as $\mu = \f{mn}{\omega_n R^n}$, where $\omega_n:=n|B_1(0)|$ is the surface area of the unit sphere in $\R^n$.\\
The functions $u$ and $v$ represent the densities of a population of organisms and a chemoattractant, respectively. In many relevant biological applications, the chemical diffuses much faster than the population. Therefore, \eqref{KS} arises as a relevant limit case (\cite{Jaeger1992, Nagai1995}) of the fully parabolic variant introduced in \cite{Keller1970}, modeling an instantaneous response of the chemical to changes in population density.\\
The system \eqref{KS} admits classical solutions
  \begin{eqnarray*}
	\left\{ \begin{array}{l}
	u \in C^0(\bom\times [0,\tm)) \cap C^{2,1}(\bom\times (0,\tm)) \qquad \mbox{and} \\[1mm]
	v\in \bigcap_{q>n} L^\infty_{loc}([0,\tm);W^{1,q}(\Omega)) \cap C^{2,0}(\bom\times (0,\tm)),
	\end{array} \right.
  \end{eqnarray*}
up to some maximal $T_{max}\in(0,\infty]$ (\cite{Fujie2015, Biler1998, Cieslak2008, Djie2010}). In one spatial dimension ($n=1$), all solutions of \eqref{KS} exist globally and remain bounded (\cite{Osaki2001}). By contrast, for $n\geq 2$, there are initial data $u_0\in C^0(\bom)$ such that the corresponding solution blows up in finite time (see, for instance, \cite{Winkler2019, Blanchet2006, Jaeger1992, Nagai1995, Biler1995, Winkler2013}). That is, $T_{max}<\infty$, and thus due to an extensibility criterion necessarily
\[
\limsup_{t \nearrow T_{max}} \|u(\cdot,t)\|_{L^\infty(\Om)} = \infty,
\]
which can be interpreted as an extreme form of aggregation. In 1973, Nanjundiah conjectured that for radially symmetric solutions in two dimensions, $u$ concentrates into a Dirac mass at the blow-up time (\cite{Nanjundiah1973}). Subsequent work confirmed this behavior (see, e.g., \cite{Herrero1996, Senba2001, Senba2004, Biler2008, Kavallaris2009}): There exist $m\geq 8\pi$ and $\rho\in L^1(\Om)$ such that
\[
u(\cdot,t) \rightharpoonup m \delta_0 + \rho \qquad \text{in }\mathcal M(\Om) \qquad \text{as }t \nearrow T_{max}.
\]
In contrast, in dimensions $n\geq 3$, classical radially symmetric blow-up solutions with finite existence time $T_{max}$ seem not to develop an asymptotic Dirac mass. As indicated by blow-up profiles of certain self-similar solutions in $\R^n$ (\cite{Herrero1997, Senba2005}), Souplet and Winkler (\cite{Souplet2018}) proved that more generally, for radially decreasing initial data, the blow-up profile $U$, attained as the $L^1(B_R(0))$-limit of $u(\cdot,t)$ as $t\nearrow T_{max}$, is bounded via
\[
U(x)\leq C |x|^{-2}, \qquad 0<|x|\leq R,
\]
for some $C>0$, and is therefore integrable. Furthermore, if the initial datum $u_0\in C^1(\bom)$ fulfills, in radial variables, \[
r^{n-1}u_{0r} + u_0(r) \int_0^r \Big(u_0(s)-\mu \Big) s^{n-1} ds \geq 0 \qquad \text{for all }r\in(0,R),
\]
then a matching lower bound holds (\cite{Souplet2018}): There exists $c>0$ such that
\[
U(x)\geq c |x|^{-2}, \qquad 0<|x|\leq R.
\]
In the whole-space setting, for radially decreasing initial data satisfying a condition met in particular by $u_0\in L^1(\R^n)$, it was moreover shown that the limit
\[
\lim_{|x|\searrow 0} |x|^2 U(x) = \alpha \in [0,\infty)
\]
exists (\cite{Bai2025}).\\
The objective of this paper is to investigate, for $n\geq 3$, whether \eqref{KS} admits the formation of Dirac-type singularities, albeit perhaps after blow-up time, and, when such singularities occur, to analyze their behavior.
To this end, we introduce a generalized notion of solution that accomodates Radon measures, thereby allowing for singular mass concentration within the solution framework.

\begin{definition}
Let $n\geq 1$ and $\Om=B_R(0)\subset \R^n$ for some $R>0$, $m>0$ and $\mu = \f{mn}{\omega_n R^n}$. Furthermore, suppose that $u_0$ satisfies \eqref{init} and that for almost every $t>0$, $(u(t),v(t))$ is a tuple of radially symmetric measures such that
\[
	\left\{ \begin{array}{l}
	u\in L_{loc}^\infty([0,\infty); \mathcal M_+(\bom)) \qquad \mbox{and}\\[1mm]
	v\in L_{loc}^1([0,\infty);W^{1,1}(\Om))\cap C^{1,0}((\bom\setminus\{0\})\times(0,\infty)).
	\end{array} \right.
\]
We call $(u,v)$ a mass-conserving radially symmetric measure-valued solution of \eqref{KS}, if
\begin{align}\label{wdeffrommeasureacc}
w(s,t):=\f{u(t)(B_{s^\f1n}(0))}{\omega_n} \qquad \text{for }s\in(0,R^n]~\text{and }t\geq 0
\end{align}
defines a classical solution $w \in C^0((0,R^n]\times[0,\infty))\cap C^{2,1}((0,R^n]\times(0,\infty))$ of the incomplete Dirichlet problem
\begin{align}\label{0wacc}
\left\lbrace
\begin{array}{r@{}l@{\quad}l@{\quad}l@{\,}c}
	&w_t = n^2 s^{2-\frac{2}{n}} w_{ss} + nww_s - \mu s w_s,
	\qquad & s\in (0,R^n), \ t>0, \\[1mm]
	&w(R^n,t)=\frac{m}{\omega_n},
	\qquad & t>0, \\[1mm]
	& w(s,0)=w_0(s),
	& s\in (0,R^n),
\end{array}\right.
\end{align}
with $w_s(s,t)\geq 0$ for $s\in(0,R^n)$ and $t\geq 0$, where $w_0$ is given by
\begin{align}\label{w0acc}
	w_0(s):=\int_0^{s^\frac{1}{n}} \rho^{n-1} u_0(\sigma) d\sigma,
	\qquad s\in [0,R^n],
\end{align}
and $v$ satisfies
\begin{align}\label{vdistreqacc}
\int_{\bom} \nabla v(\cdot,t) \cdot \nabla \psi dx = \int_{\bom} \psi d(\mu dx - u(t))
\end{align}
for all radially symmetric $\psi \in C^\infty(\bom)$ and $t\in(0,\infty)$.\\
A mass-conserving radially symmetric measure-valued solution $(u,v)$ of \eqref{KS} is called minimal if, for every other such solution $(\tilde u, \tilde v)$,
\begin{align*}
u(t)(B_r(0)) \leq \tilde u(t)(B_r(0)) \qquad \text{for all }r\in(0,R]~\text{and }t\geq 0
\end{align*}
holds.
\end{definition}

\textbf{Remarks.} \begin{itemize}
\item[(i)] For radially symmetric classical solutions, that is $u\in C^0(\bom\times[0,T_{max}))\cap C^{2,1}(\bom\times(0,T_{max}))$ of \eqref{KS} for some $T_{max}>0$, after a change to radial coordinates $u(r,t):=u(x,t)$, where $r=|x|$, \eqref{0wacc} arises via the transformation
\begin{align*}
w(s,t):=\int\limits_0^{s^\frac{1}{n}} \sigma^{n-1}u(\sigma,t)d\sigma,~~s=r^n \in [0,R^n],~t\in[0,T_{max}), 
\end{align*}
including the boundary condition $w(0,t)=0$ for all $t\in [0,T_{max})$ (\cite{Jaeger1992}). If we interpret $u(t)$ as a measure for $t\in[0,T_{max})$, this definition is equivalent to \eqref{wdeffrommeasureacc}.
\item[(ii)] The boundary condition $w(R^n,t)=\frac{m}{\omega_n}$ implies mass conservation, that is $\int_{\bom} du(t) = \omega_n w(R^n,t) = m$ for all $t\geq 0$.
\item[(iii)] A potential Dirac singularity in the origin is given by $u(t)(\{0\})=w(0,t):=\lim_{s\searrow 0} w(s,t)$ for $t\geq 0$, where the limit is guaranteed to exist as $w_s\geq 0$ in $(0,R^n)\times[0,\infty)$.
\item[(iv)] Since \eqref{KS} admits a unique classical solution up to some $T_{max}>0$, the minimal mass-conserving radially symmetric measure-valued solution $(u,v)$ coincides with it in $[0,T_{max})$ if we identify $u(\cdot,t)$ with its density $\rho(\cdot,t)$ for $t\in[0,T_{max})$. Hence, this concept may be regarded as a continuation of the classical solution beyond its potential blow-up time.
\end{itemize}
Our main theorem establishes that, for $n\geq 3$, there exist initial data $u_0\in C^0(\bom)$ leading to the formation  of a Dirac-type mass in the origin. In contrast to $n=2$, this collapse happens slowly in the sense that $\theta$ contains no jump from zero to a positive mass level. Moreover, the concentration of mass is progressing strictly afterward and asymptotically absorbs the entire mass of the system.

\begin{theorem}\label{maintheoremacc}
Let $n\geq 3$ and $\Om=B_R(0)\subset \R^n$ for some $R>0$, $m>0$, $\mu = \f{mn}{\omega_n R^n}$, and additionally assume that $u_0\in C^0(\bom)$ satisfies \eqref{init}. Then there exists a minimal mass-conserving radially symmetric measure-valued solution $(u,v)$ of \eqref{KS} which is such that
\[
u(t)=\theta(t)\delta_0 + \rho(\cdot,t) dx, \qquad t \geq 0,
\]
where $\rho \in C^0((\bom\setminus\{0\})\times[0,\infty)) \cap C^{1,1}((\bom\setminus\{0\})\times(0,\infty))$ and $\theta \in L^\infty((0,\infty))$ are nonnegative functions, and $\theta$ is monotone increasing and right-continuous.\\
Moreover, for $\gamma\in(0,1-\f2n)$, $c>0$, $\tau>0$ and $\eta>0$ there exists $\delta>0$ with the property that if, additionally, $u_0$ is such that $w_0$ as in \eqref{w0acc} fulfills
\begin{align*}
w_0(s) \geq c \cdot \f{s^2}{s^{2-\gamma}+\delta} \qquad \text{for all }s\in[0,R^n],
\end{align*}
then for some $t_0 \in (0,\tau)$, we have that
\begin{align*}
0<\theta(t_0)<\eta.
\end{align*}
Furthermore, $\theta$ is strictly increasing in $[t_0,\infty)$ and satisfies
\begin{align*}
\lim_{t \to \infty} \theta(t) = m.
\end{align*}
\end{theorem}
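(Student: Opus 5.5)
We plan to construct the minimal solution as a monotone limit of regularized problems. For $\eps\in(0,1)$ we consider the Dirichlet problem for $\weps$ with the same PDE as in \eqref{0wacc}, boundary values $\weps(0,t)=0$ and $\weps(R^n,t)=\f m{\omega_n}$, but with the nonlinearity $n\weps\wepss$ replaced by a truncated version such as $n\min\{\weps,\eps^{-1}\}\wepss$; alternatively, we replace the degenerate diffusion coefficient near $s=0$ by a regularized one. In either case global classical solutions exist. Since the problem is scalar and of second order in the cumulative variable, a comparison principle applies. This gives two things:
\begin{itemize}
\item[(a)] $\wepss\ge0$, by applying the maximum principle to the equation for $\wepss$;
\item[(b)] monotonicity of $\weps$ in $\eps$, so that $w:=\lim_{\eps\searrow0}\weps$ exists pointwise.
\end{itemize}
Away from $s=0$, $w$ satisfies $0\le w\le \f m{\omega_n}$ and $w_s\ge0$. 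Interior parabolic Schauder estimates on $[s_0,R^n]\times[t_1,T]$ give $C^{2,1}$ convergence there, so $w$ solves \eqref{0wacc}. We then set $\theta(t):=w(0,t)=\lim_{s\searrow0}w(s,t)$ and $\rho:=\omega_n n w_s$ in radial variables, and obtain $v$ by integrating \eqref{vdistreqacc} radially. For minimality, we compare any other solution $\tilde w$ (which satisfies $\tilde w\ge0$, $\tilde w_s\ge0$ and the same boundary datum at $R^n$) with $\weps$ on $[s_0,R^n]$ and let $s_0\to0$, using $\weps(0,t)=0\le\tilde w(0,t)$. Monotonicity of $\theta$ should come from the observation that $\underline w(s,t):=\theta(t_1)$ is a subsolution for $t\ge t_1$ in a neighbourhood of $0$. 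More precisely, we apply comparison on $(0,s_0)$ using $w_s\ge0$; this gives $\theta(t)\ge\theta(t_1)$. Right-continuity should then follow from monotonicity together with upper semicontinuity of $t\mapsto\inf_s w(s,t)$, which is an infimum of continuous functions.

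For the formation part we want a subsolution which develops a positive value at $s=0$. Taking $\gamma\in(0,1-\f2n)$, we look for
\[
\underline w(s,t)=a(t)\f{s^2}{s^{2-\gamma}+\delta(t)}
\]
with $\delta(t)$ decreasing and reaching $0$ at some $t_0<\tau$, and with $a(t)$ possibly slightly decreasing. Once $\delta=0$, this becomes $a s^\gamma$, which still vanishes at $0$. So instead we would use a family whose parameter continues past zero, for instance
\[
\underline w = a(t)\big(s^{\gamma}\text{-type profile}\big)+\kappa(t)
\]
with $\kappa$ growing from $0$ continuously; this is consistent with the claim that $\theta$ becomes positive without a jump. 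The key computation is verifying
\[
\underline w_t\le n^2s^{2-2/n}\underline w_{ss}+n\underline w\,\underline w_s-\mu s\underline w_s .
\]
Near $s=0$ the nonlinear term $n\underline w\underline w_s\sim s^{2\gamma-1}$ dominates the diffusion term, which is $\sim s^{\gamma-2/n}$, precisely because $\gamma<1-\f2n$. This is where the restriction on $\gamma$ enters, and it is the main obstacle: balancing $\delta'$ (or $\kappa'$) against these terms uniformly in $s$, including the transition region $s^{2-\gamma}\sim\delta$ and near $R^n$, where we need $\underline w\le\f m{\omega_n}$. We would first try $\delta'(t)=-K$ for a large $K$, choosing $\delta(0)$ small so that $t_0=\delta(0)/K<\tau$, and then $\kappa$ with $\kappa'\le c\,a^2$ and $\kappa(t)<\eta$ on a short interval. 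The initial assumption on $w_0$ gives the ordering at $t=0$, and the minimal solution $w$ dominates $\underline w$ because $\weps\ge\underline w$ for small $\eps$. Hence $\theta(t_0')>0$ for some $t_0'\le$ the first time $\kappa>0$, and smallness $\theta(t_0)<\eta$ follows by choosing $t_0$ as close as needed to $\inf\{\theta>0\}$, by right-continuity and since $\theta$ does not jump there (continuity coming from an upper barrier near $s=0$ built from $w$ at an earlier time).

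Strict increase after $t_0$: when $\theta(t_1)>0$, near $s=0$ we have $w\ge\theta(t_1)$ and $w_s\ge0$. The drift $n w w_s$ transports mass toward $0$; a subsolution of the form $\theta(t_1)+\epsilon(t-t_1)\phi(s)$ with small $\epsilon$ should give $\theta(t)>\theta(t_1)$ for $t>t_1$, using $w_s>0$ by the strong maximum principle unless $w$ is constant, which is impossible by mass conservation unless $\theta=m/\omega_n$. Finally, $\theta\to m$: $\theta$ is bounded and increasing, so it has a limit $\theta_\infty$. Using a Lyapunov-type argument, namely integrating the equation in $s$ against a suitable weight, we get that $\int_0^\infty$ of the flux into the origin is finite. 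This forces any $\omega$-limit to be a steady state with $w(0)=\theta_\infty$ and $w_s\ge0$. For $n\ge3$ the only such steady state consistent with $w(0)>0$ is constant, since the nonlinear term would otherwise blow up, so $\theta_\infty\omega_n=m$ (with the paper's normalization, $\theta\to m$).
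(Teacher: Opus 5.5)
\textbf{There are genuine gaps.} The central one recurs throughout: you repeatedly compare $w$ with a subsolution that is positive at $s=0$.
\begin{itemize}
\item For monotonicity you use the constant $\theta(t_1)$.
\item For formation you use $a(t)(\cdots)+\kappa(t)$.
\item For strict increase you use $\theta(t_1)+\epsilon(t-t_1)\phi(s)$.
\end{itemize}
But $w$ carries no boundary condition at $s=0$ and need not be continuous there. The only available comparison (Lemma \ref{halfcomparison}) therefore requires the subsolution at $s=0$ to lie below a lower bound $a$ of $w$ that is \emph{already known}.

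\textbf{Monotonicity.} Comparing with $\theta(t_1)$ presupposes $w(\cdot,t)\ge\theta(t_1)$ for $t>t_1$, which is the claim itself; $w_s\ge0$ only gives $w\ge w(0,t)$. The paper explicitly flags this. It instead uses the static subsolution $\uW$ of Lemma \ref{staticsubsolacc}, which vanishes at $0$ but has $\uW(s_0)\ge M_0$, and whose minimal evolution is nondecreasing in time (Lemmas \ref{uWstartingdata}, \ref{risingsubsolsol}).

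\textbf{Formation.} Here it is worse. Your $\uw$ with $\uw(0,t)=\kappa(t)>0$ cannot lie below $\weps$, since $\weps(0,t)=0$. Nor can it be compared with $w$ without already knowing $w(0,t)\ge\kappa(t)$. Your $\delta(t)$-family, which does vanish at $0$, only tends to $as^\gamma$ and puts no mass at the origin. The missing idea is the indirect argument of Lemmas \ref{magicprep}--\ref{estwg0acc}.
\begin{itemize}
\item Testing with $s^{-1+\f2n}w^q$ shows: if $w(0,\cdot)\equiv0$ on $(0,\tau)$, then $\int_0^\tau\sup_s s^{-\gamma_0}w\,dt\le C$, with $C$ independent of $w_0$.
\item The subsolution $y(t)\f{s^2}{s^{2-\gamma}+\delta}$ has fixed $\delta$, vanishes at $0$ and has bounded gradient, so it is admissible in the comparison.
\item It forces that integral to be at least $c\,\tau\,\delta^{-\f{\gamma_0-\gamma}{2-\gamma}}$, a contradiction for small $\delta$.
\end{itemize}

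\textbf{Strict increase.} The subsolution must equal the known bound $a_0$ at $s=0$, yet converge as $t\nearrow t_0+\tau_0$ to a value $>a_0$ for \emph{every} $s>0$. The transported profile $\uvp(\psi(s,t))$ of Lemmas \ref{functionpsidiffineqpluginfct} and \ref{strictmonotonacclemma} achieves exactly this. Your $\theta(t_1)+\epsilon(t-t_1)\phi$ either violates the condition at $0$ (if $\phi(0)>0$) or yields nothing there (if $\phi(0)=0$).

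\textbf{Limit $\theta\to m$.} Suppose every $\omega$-limit in $C^0_{loc}((0,R^n])$ is a nondecreasing steady state; your flux bound does not obviously give this, and the paper gets it from the time-monotone solutions of Lemma \ref{risingsubsolsol}. Then it is the constant $\f m{\omega_n}$ by Lemma \ref{staticsollemma}. This still says nothing about $w(0,t)$, because the convergence is not uniform up to $s=0$: you only get $W(0+)\ge\theta_\infty$, not $W(0+)=\theta_\infty$. The paper needs the extra iteration of Lemma \ref{massaccumulatestotallyacc}, which raises $\inf_s w$ step by step once $w(\f{s_0}4,t)\ge\f m{\omega_n}-\eps$.

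\textbf{Smallness $\theta(t_0)<\eta$.} Right-continuity does not exclude a jump from the left at $\inf\{\theta>0\}$, and your upper barrier is not specified. The paper uses the explicit barrier $(y(t)+as)^\gamma$ for $\weps$ from $t=0$ (Lemma \ref{slowcollapseacc}), giving $\theta(t)\le\omega_n ct^\gamma$. It then chooses $\delta$ so small that formation occurs before some $\tau_*$ with $c\tau_*^\gamma<\eta/\omega_n$.

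\textbf{Regularization.} Your first regularization is vacuous. Since $0\le\weps\le\f m{\omega_n}$, the cutoff $\min\{\weps,\eps^{-1}\}$ is inactive and does not prevent gradient blow-up at $s=0$, which is exactly the blow-up of $u$. One must truncate $\wepss$ by some $f_\eps(\xi)\le\xi$ increasing as $\eps\searrow0$. This gives at once global existence, ordering in $\eps$, and the subsolution property needed for minimality. Regularizing the diffusion instead does not give the last two by comparison, since $\nu\,\partial_s^2 w$ has no sign.

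\textbf{What does work.} Your right-continuity argument is correct: an infimum of continuous functions is upper semicontinuous, and combined with monotonicity this gives right-continuity. It is considerably shorter than the supersolution argument of Lemma \ref{rightcontinuityacc}, once monotonicity has actually been established.
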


\subsection{Ideas and results on the level of accumulated densities}\label{subpropos}

We construct solutions $w\in C^0((0,R^n]\times [0,\infty)) \cap C^{2,1}((0,R^n]\times(0,\infty))$ to \eqref{0wacc}, where the initial data $w_0$ satisfies
\begin{align}\label{initacc}
w_0 \in W^{1,\infty}_{loc}((0,R^n]), \quad w_{0s} \geq 0, \quad w_0(0)=0, \quad w_0(R^n)=\f{m}{\omega_n},
\end{align}
as monotone increasing limits of solutions of suitable linear systems. These linear systems are themselves approximated by non-degenerate systems (Lemma~\ref{wepsdlem}--\ref{createdsol}). This also bestows minimality upon these solutions (Lemma \ref{minimalsolacc}).
A key ingredient, not only in this section but throughout the entire paper, is a comparison principle which allows for discontinuities at spatial zero for the supersolution (Lemma \ref{halfcomparison}).\\
By differentiating the singularly weighted functional
\[
\int_0^{R^n} s^{-1+\f2n} w^q ds
\]
for $q\in(0,1)$ with suitable approximations, we shall establish that for any $t_0\geq 0$ and $\tau>0$,
\begin{align*}
\int\limits_{t_0}^{t_0+\tau} \int\limits_0^{R^n} s^{-1+\f2n} w^q w_s ds dt \leq C(1+\tau)
\end{align*}
(Lemma \ref{magicprep}), which we subsequently employ in Lemma \ref{magic} to obtain the crucial estimate
\begin{align}\label{magicintroacc}
\int\limits_{t_0}^{t_0+\tau} s^{-\gamma}w dt \leq C(\gamma)
\end{align}
for all $s\in(0,R^n)$ and $\gamma\in(0,1-\f2n)$ if 
\begin{align}\label{w0at0introacc}
w(0,t)=0 \qquad \text{for all }t\in(t_0,t_0+\tau).
\end{align}
In turn, we construct a subsolution of the form $\uw(s,t):=y(t)\cdot \vp(s)$, where $y>0$ and $\vp(s)$ is chosen sufficiently close to $s^\gamma$ for some $\gamma\in\left(0,1-\f2n\right)$, in such a way as to violate \eqref{magicintroacc} and thereby contradict \eqref{w0at0introacc} (Lemma \ref{compfctmassacc}--\ref{estwg0acc}). At least for the minimal solution, though, there is no immediate jump to a positive mass level, as a comparison argument involving the approximating solutions in Lemma \ref{slowcollapseacc} shows.\\
In Subsection \ref{subsectionmonacc}, Lemma \ref{halfcomparison} cannot be applied directly to conclude that $w(0,\cdot)$ is increasing on $[0,\infty)$. Instead, we show that for any $t_0\geq 0$ and $s_0 \in (0,R^n)$,
\[
w \geq \inf_{s\in(0,R^n)} w(s,t_0) \qquad \text{in }[s_0,R^n]\times[t_0,\infty).
\]
This inequality implies the desired monotonicity upon letting $s_0 \searrow 0$.
To that end, we construct a family of subsolutions that exhibit the key property of being increasing in time, enabling us to control them from below by their initial data (Lemma \ref{uWstartingdata}).\\
With the monotonicity of $w(0,\cdot)$ established, for any $\tau_0>0$ we can then design a subsolution $\uw$ in $[0,s_0]\times[t_0,t_0+\tau_0)$ for some $s_0\in(0,R^n)$ such that 
\[
\lim_{t\nearrow t_0+\tau_0} \uw(s,t) \geq \inf_{s\in(0,R^n)}w(s,t_0) + \eta \qquad \text{for all }s\in(0,s_0]
\]
for some $\eta>0$ (Lemma \ref{uvpacclemma}--\ref{strictmonotonacclemma}). Consequently, $w(0,\cdot)$ is even strictly increasing.\\
A similar argument, incorporating a suitable supersolution for the approximating solutions, further shows that $w(0,\cdot)$ is right-continuous when $w$ is the minimal solution of \eqref{0wacc} (Lemma \ref{ovpacclemma}--\ref{rightcontinuityacc}).\\
Moreover, the only nondecreasing positive steady state of \eqref{0wacc} is $W \equiv \f{m}{\omega_n}$ (Lemma \ref{staticsollemma}).
Consequently, since the time-increasing subsolutions constructed in Lemma \ref{risingsubsolsol} are themselves solutions of \eqref{0wacc} corresponding to smaller initial data, we deduce that $w(\cdot,t)\to W\equiv \f{m}{\omega_n}$ in $C^0_{loc}((0,R^n])$ whenever $w(0,t)$ becomes positive (Lemma \ref{wtomomeganforsgreater0acclemma}).
Thereupon, an iterative application of subsolutions, again relying on Lemma \ref{functionpsidiffineqpluginfct}, yields the convergence $w(0,t) \to \f{m}{\omega_n}$ as $t\to\infty$.\\
Hence, we obtain the following results on the existence and behavior of solutions to \eqref{0wacc}:
\begin{proposition}\label{prop1acc}
Let $n\geq 3$, $R>0$, $m>0$, $\mu = \f{mn}{\omega_n R^n}$, and suppose that $w_0$ complies with \eqref{initacc}.\\
Then there exists a minimal nonnegative classical solution $w\in C^0((0,R^n]\times [0,\infty)) \cap C^{2,1}((0,R^n]\times(0,\infty))$ of \eqref{0wacc} which satisfies $w_s>0$ in $(0,R^n)\times(0,\infty)$; in particular, $w(0,t):=\lim_{s \searrow 0} w(s,t)$ is well-defined for all $t>0$ and defines a right-continuous and increasing function.\\
Moreover, for any choice of $\gamma_i \in(0,1-\f2n)$ for $i\in\{1,2\}$, $\uc > 0,~\oc > 0$, $\tau>0$ and $\eta>0$, there exists $\delta>0$ such that if $w_0$ fulfills the inequalities
\begin{align}\label{prop1accw0bounds}
\uc \cdot \f{s^2}{s^{2-\gamma_1}+\delta} \leq w_0(s) \leq \oc \cdot s^{\gamma_2} \qquad \text{for all }s\in[0,R^n],
\end{align}
then $w$ has the property that
\begin{align*}
0<w(0,t_0)<\eta \qquad \text{for some }t_0\in (0,\tau).
\end{align*}

\end{proposition}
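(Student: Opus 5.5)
The proof assembles the results outlined in Subsection \ref{subpropos}; there are three parts.

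\textbf{Existence, minimality, monotonicity.} First I approximate \eqref{0wacc} by non-degenerate linear problems. In the quasilinear term $nww_s$, the coefficient $w$ is frozen at the previous iterate, and $s^{2-\frac2n}$ is regularized near $s=0$, with Dirichlet data $0$ at an $\eps$-shifted origin. Lemmas \ref{wepsdlem}--\ref{createdsol} give classical solutions. By the comparison principle these increase both in the iteration index and as $\eps\searrow0$. Their monotone limit is bounded by $\frac m{\omega_n}$, and interior parabolic Schauder estimates on compact subsets of $(0,R^n]\times(0,\infty)$ make it a classical solution there. Minimality then follows as in Lemma \ref{minimalsolacc}. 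Any other solution is a supersolution of each approximating problem, even though it may be discontinuous at $s=0$, and Lemma \ref{halfcomparison} is designed for exactly this situation, so every approximant lies below it.

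Each approximant satisfies $w_s\ge0$, so the limit does too. Strict positivity $w_s>0$ for $t>0$ follows from the strong maximum principle applied to the linear equation satisfied by $w_s$, obtained by differentiating in $s$. As a result $w(0,t)=\lim_{s\searrow0}w(s,t)$ exists. Monotonicity of $w(0,\cdot)$ comes from Lemma \ref{uWstartingdata}: there $w\ge\inf_s w(s,t_0)=w(0,t_0)$ on $[s_0,R^n]\times[t_0,\infty)$, and then I let $s_0\searrow0$. Right-continuity is given by Lemmas \ref{ovpacclemma}--\ref{rightcontinuityacc}. These use a supersolution for the approximating problems, started near $w(\cdot,t_0)$, whose value at the origin exceeds $w(0,t_0)$ by an arbitrarily small amount on a short time interval. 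Minimality is what allows this supersolution to be transferred to the limit $w$.

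\textbf{Formation of positive mass.} I argue by contradiction. Suppose $w(0,t)=0$ on all of $(0,\tau)$. Then Lemmas \ref{magicprep} and \ref{magic}, with $t_0=0$, give
\[
\int_0^{\tau}s^{-\gamma_1}w(s,t)\,dt\le C(\gamma_1)
\]
for all $s$. On the other hand, the lower bound in \eqref{prop1accw0bounds} lets me place the subsolution $\uw=y(t)\vp(s)$ from Lemmas \ref{compfctmassacc}--\ref{estwg0acc} beneath $w_0$, with $\vp$ close to $s^{\gamma_1}$ and $y$ obtained from the ODE that makes $\uw$ a subsolution. The factor $\frac{s^2}{s^{2-\gamma_1}+\delta}$ behaves like $s^{\gamma_1}$ away from $s\approx\delta^{1/(2-\gamma_1)}$. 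Taking $\delta$ small therefore makes $\int_0^\tau s^{-\gamma_1}\uw\,dt$ exceed $C(\gamma_1)$ at some small $s$, which is the contradiction. Hence $w(0,t_1)>0$ for some $t_1<\tau$.

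\textbf{Smallness, the main obstacle.} The hard step is showing that $w(0,t_0)<\eta$ at some $t_0\in(0,\tau)$, i.e.\ that there is no instantaneous jump. Here I use the upper bound $w_0\le\oc s^{\gamma_2}$ together with Lemma \ref{slowcollapseacc}. Comparing the approximants with a supersolution of the form $\oc' s^{\gamma_2'}$ plus a small time-growing correction shows they stay bounded by $\eta$ near $s=0$ on a short interval $[0,t_*)$, uniformly in $\eps$. By minimality this bound passes to $w$, so $w(0,t)<\eta$ for $t<t_*$.

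Combining this with monotonicity and right-continuity, let $T^*:=\inf\{t:w(0,t)>0\}$. The formation step, applied to every smaller $\tau$ and re-tuned through $\delta$, gives $T^*<\min\{\tau,t_*\}$. Monotonicity and right-continuity then produce some $t_0\in(T^*,\min\{\tau,t_*\})$ with $0<w(0,t_0)<\eta$.

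The care needed here is to ensure that $t_*$ depends only on $\oc$, $\gamma_2$ and $\eta$, and not on $\delta$, so that $\delta$ can be chosen last.
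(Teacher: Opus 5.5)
\textbf{Gap in the formation step.} The contradiction fails because you apply Lemma~\ref{magic} with the same exponent $\gamma_1$ that appears in the subsolution.

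Since $s^{-\gamma_1}\vp_\delta(s)=\frac{s^{2-\gamma_1}}{s^{2-\gamma_1}+\delta}<1$ for every $s$ and every $\delta>0$, and $y\le y_0$, you only get $\int_0^\tau s^{-\gamma_1}\uw(s,t)\,dt\le y_0\tau$, uniformly in $\delta$. Bringing $\vp_\delta$ closer to $s^{\gamma_1}$ cannot push this past $C(\gamma_1)$, because the weight $s^{-\gamma_1}$ normalizes $s^{\gamma_1}$ itself to $1$.

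The missing idea is to use the room left by the strict inequality $\gamma_1<1-\frac2n$. Fix $\gamma_0\in(\gamma_1,1-\frac2n)$ and apply Lemma~\ref{magic} with $\gamma_0$; its bound is still independent of $w_0$, hence of $\delta$. Now $s^{-\gamma_0}\vp_\delta(s)=\frac{s^{2-\gamma_0}}{s^{2-\gamma_1}+\delta}$ attains a value of order $\delta^{-(\gamma_0-\gamma_1)/(2-\gamma_1)}$ at the $t$-independent point $s_\delta=\big(\frac{(2-\gamma_0)\delta}{\gamma_0-\gamma_1}\big)^{1/(2-\gamma_1)}$, which lies in $(0,R^n)$ for small $\delta$. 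Shrink $\tau$ below the lifespan of $y$ if necessary, so that $y\ge c_1>0$ on $[0,\tau]$. Then $\int_0^\tau s_\delta^{-\gamma_0}w(s_\delta,t)\,dt\ge c\,\tau\,\delta^{-(\gamma_0-\gamma_1)/(2-\gamma_1)}$, which exceeds the $\delta$-independent bound once $\delta$ is small. This is exactly the argument of Lemma~\ref{estwg0acc}.

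\textbf{Two lesser points.} First, the approximation you describe is not what Lemmas~\ref{wepsdlem}--\ref{createdsol} do. The regularized diffusion $\nu+n^2s^{2-\frac2n}$ does match. However, there is no frozen coefficient and no shifted origin. Instead, the nonlinearity is kept but truncated to $n\weps f_\eps(\wepss)$ with $f_\eps(\xi)\le\xi$, the initial data are cut to $\min\{\frac s\eps,w_0\}$, and $\weps(0,t)=0$ is imposed at the true origin. Since you cite those lemmas this is harmless, but the scheme you describe would need its own monotonicity and convergence proof.

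Second, in the last step neither $T^*$ nor right-continuity is needed. Once $w(0,\cdot)<\eta$ on $(0,t_*)$ with $t_*$ independent of $\delta$, applying the formation step with $\min\{\tau,t_*\}$ in place of $\tau$ directly yields $t_0$. This is how Corollary~\ref{corslowcollapseacc} proceeds, and your remark about fixing $t_*$ before $\delta$ is correct.
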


\begin{proposition}\label{prop2acc}
Let $n\geq 3$, $R>0$, $m>0$, $\mu = \f{mn}{\omega_n R^n}$, and assume $w_0$ adheres to \eqref{initacc}. Furthermore, suppose that $w\in C^0((0,R^n]\times [0,\infty)) \cap C^{2,1}((0,R^n]\times(0,\infty))$ is a nonnegative classical solution of \eqref{0wacc} satisfying $w_s\geq 0$ in $(0,R^n)\times(0,\infty)$ which is such that
\begin{align*}
\inf_{s\in(0,R^n)} w(s,t_0) > 0 \qquad \text{for some }t_0\geq 0.
\end{align*}
Then $w(0,t):=\inf_{s\in(0,R^n)} w(s,t)$ is strictly increasing in $[t_0,\infty)$, and
\begin{align*}
w(0,t) \to \f{m}{\omega_n} \qquad \text{as }t\to\infty.
\end{align*}

\end{proposition}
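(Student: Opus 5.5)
The statement to prove is Proposition \ref{prop2acc}. The plan is to combine two facts: a \emph{non-strict} monotonicity of $w(0,\cdot)$, obtained from time-increasing subsolutions; and an \emph{upgrade} to strict monotonicity and to convergence, obtained by inserting localized subsolutions near $s=0$.

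\textbf{Step 1 (monotonicity).} Fix $t_1\geq t_0$ and put $\theta_1:=\inf_{s\in(0,R^n)} w(s,t_1)$. By assumption $\theta_1>0$ for $t_1=t_0$, and later we argue inductively. For $s_0\in(0,R^n)$ I will construct a subsolution $\uW$ of \eqref{0wacc} on $[s_0,R^n]\times[t_1,\infty)$ with three properties:
\begin{itemize}
\item[(a)] $\uW_t\geq 0$;
\item[(b)] $\uW(\cdot,t_1)\leq w(\cdot,t_1)$ and $\uW(s_0,t)\leq \theta_1\leq w(s_0,t)$;
\item[(c)] $\uW(s,t_1)\nearrow\theta_1$ uniformly on compacts as parameters are tuned.
\end{itemize}
The natural candidate is the solution of \eqref{0wacc} on $[s_0,R^n]$ with Dirichlet datum $\theta_1$ at $s_0$ and $m/\omega_n$ at $R^n$, started from a stationary subsolution profile lying below $w(\cdot,t_1)$. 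Time-monotonicity then follows from the maximum principle applied to $\uW_t$, since the initial profile is a subsolution.

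The comparison principle of Lemma \ref{halfcomparison}, or its standard version away from $0$, gives $w\geq \uW(\cdot,t_1)$ for $t\geq t_1$. Letting the profile approach $\theta_1$ and $s_0\searrow0$ yields $w\geq\theta_1$ on $(0,R^n]\times[t_1,\infty)$. Hence $w(0,t)\geq w(0,t_1)$ for all $t\geq t_1$, and positivity propagates to all later times.

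\textbf{Step 2 (strictness).} Given $t_1\geq t_0$ and $\tau_0>0$, I will work on a small strip $[0,s_0]\times[t_1,t_1+\tau_0)$ where $w\geq\theta_1$. There the drift term $nww_s\geq n\theta_1 w_s$ is strong compared with the diffusion $n^2s^{2-2/n}w_{ss}$ near $s=0$, because $2-\frac2n>1$; this is exactly where $n\geq3$ enters. I will build a subsolution of the form $\uw=\theta_1+\eta\,\vp(s/\lambda(t))$, where $\vp$ is increasing, $\vp=0$ beyond a threshold, and $\lambda(t)\searrow0$ as $t\nearrow t_1+\tau_0$. The transport toward $s=0$ then pushes the increment $\eta$ into the origin, and after choosing $\eta$ small the boundary comparison at $s_0$ holds by Step 1 and $w_s>0$. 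This gives $w(0,t_1+\tau_0)\geq\theta_1+\eta$, hence strict increase. I expect the main obstacle to be verifying the subsolution inequality uniformly as $\lambda\to0$: the drift $-\lambda'/\lambda\cdot s\vp'$ must dominate the term $n^2 s^{2-2/n}\lambda^{-2}\vp''$. Since $s\sim\lambda$, this term scales like $\lambda^{-2/n}$, while the drift scales like $\theta_1/\lambda$, so the drift wins for small $\lambda$. I would first try $\lambda(t)=c(t_1+\tau_0-t)$.

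\textbf{Step 3 (limit).} By Steps 1 and 2, $w(0,t)$ increases to some $\theta_\infty\leq m/\omega_n$. Lemma \ref{staticsollemma} states that $W\equiv m/\omega_n$ is the only nondecreasing positive steady state. The subsolutions of Lemma \ref{risingsubsolsol} are time-increasing solutions with smaller data, so they converge to a steady state; this gives $w(\cdot,t)\to m/\omega_n$ in $C^0_{loc}((0,R^n])$ (Lemma \ref{wtomomeganforsgreater0acclemma}). To transfer this to $s=0$, suppose $\theta_\infty<m/\omega_n$. Then for large $t$ we have $w\geq m/\omega_n-\epsilon$ on $[s_0,R^n]$, while $w(0,t)\geq\theta_\infty-\epsilon$. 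Applying the Step 2 construction repeatedly with increments $\eta$ bounded below in terms of this gap, via Lemma \ref{functionpsidiffineqpluginfct}, raises $w(0,\cdot)$ by a fixed amount over each bounded time interval, which contradicts $\theta_\infty<m/\omega_n$. Therefore $w(0,t)\to m/\omega_n$.
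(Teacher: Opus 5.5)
Your Step 1 is circular. Property (b) requires $w(s_0,t)\ge\theta_1$ for all $t\ge t_1$. The alternative you mention, Lemma \ref{halfcomparison} with $a=\theta_1$, requires $w\ge\theta_1$ on $(0,R^n]\times[t_1,T)$ through \eqref{uwowgeqaacc}. In both cases you assume exactly the monotonicity you want to prove; the paper flags this obstruction at the beginning of Subsection \ref{subsectionmonacc}. The missing idea is a stationary subsolution on all of $[0,R^n]$ that \emph{vanishes} at $s=0$, so only $w\ge 0$ is needed at the left end (i.e.\ $a=0$), but that still reaches $M_0=\theta_1-\eta$ at some $s_*\le s_0$. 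This is Lemma \ref{staticsubsolacc}. Its inner piece $\f{as}{s+b}$, with $b$ tiny, is a subsolution on $[0,s_*]$ as soon as $2ns_*^{1-\f2n}+\f\mu n(s_*+s_*^2)\le M_0$, which is achievable precisely because $n\ge3$. This piece is glued to exponential pieces, convex near $R^n$, so that the profile also fits below $w(\cdot,t_1)$ at the right endpoint. Comparing $w$ with it (directly, or via the time-increasing solution it generates, Lemmas \ref{uWstartingdata} and \ref{risingsubsolsol}) gives $w(s_0,t)\ge\theta_1-\eta$ for all $t\ge t_1$, hence Lemma \ref{infmoninclemacc}. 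Your Step 3 already cites Lemma \ref{risingsubsolsol}, which contains exactly this device.

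Step 2 also fails as designed, and not for the reason you anticipate. With $\xi=s/\lambda(t)$ and $0\le\vp\le1$, the subsolution inequality for $\uw=\theta_1+\eta\vp(\xi)$ becomes, after division by $\eta/\lambda$,
\[
|\lambda'|\,\xi\,\vp'\le n^2\lambda^{1-\f2n}\xi^{2-\f2n}\vp''+(n\uw-\mu s)\vp'.
\]
Take $\lambda(t)=c(t_1+\tau_0-t)$, fix $\xi$ with $c\xi>n(\theta_1+\eta)$, and let $\lambda\to0$. The diffusion coefficient vanishes, and the inequality forces $\vp'(\xi)=0$: the self-similar compression moves at speed $c\xi$, which the bounded drift $n\uw$ cannot sustain. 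So $\vp$ must saturate at some finite $\xi_0$. But for any fixed small $\lambda>0$, the same inequality gives $\vp''\ge-K\vp'$ near $\xi_0$ with $K<\infty$. Then $e^{K\xi}\vp'$ is nondecreasing, so $\vp'$ cannot drop to $0$ at $\xi_0$ unless $\vp'\equiv0$; no nontrivial profile works.

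The paper's $\psi$ in Lemma \ref{functionpsidiffineqpluginfct} avoids both problems. First, $\phi$ is evaluated at $s+A(t-t_0)$, a profile translating with bounded speed, which gives $\psi_t\le A\psi_s$ with $A=\f{na_0}4$. Second, $\psi$ does not saturate inside $[0,s_0]$; the top is handled by the boundary comparison at $s_0$. Third, the additive term $\f s2$ gives $\psi_s\ge\f12$, which absorbs the diffusion contributions of size $n^2s_0^{1-\f2n}$ (again using $n\ge3$). Composing with an increasing convex $\uvp\ge a_0$ lying below $w(\cdot,t_0)$ keeps the subsolution property, since $\uvp''\psi_s^2\ge0$ and $n\uvp\ge na_0$. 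With this construction in place of yours, your Step 3 is essentially the paper's Lemma \ref{massaccumulatestotallyacc}.
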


\section{Comparison principle and solution construction}

The foundation of our arguments consists in a comparison principle for \eqref{0wacc}. In line with our objective that $w(0,t_0)>0$ for some $t_0\geq 0$ however, we should neither assume that $w_s$ is bounded nor that $w$ is continuous in $s=0$.
We handle the former by only imposing boundedness on the spatial derivative of the subsolution. For the latter, to compensate for the lack of regularity and therefore missing boundary information on the left side, we utilize a lower bound, that is mostly nonnegativity, as a quasi-boundary condition.
We also refrain from requiring the sub- and supersolution to be twice differentiable at every point in $(0,R^n)$ for fixed $t>0$ (confer \cite[Lemma 5.1]{Bellomo2017}); this relaxation streamlines the constructions in Lemma \ref{staticsubsolacc} and Lemma \ref{functionpsidiffineqpluginfct}.
We thus formulate

\begin{lemma}\label{halfcomparison}
Let $\mu\in\R$, $R>0$, $t_0 \geq 0,~T\in(t_0,\infty]$ and $a\in\R$, and suppose that
\begin{align}\label{comp1}
\uw \in C^0([0,R^n]\times[t_0,T)) \cap L^\infty_{loc}([t_0,T); W^{1,\infty}((0,R^n))) \quad \text{and} \quad
\ow \in C^0((0,R^n]\times[t_0,T)) 
\end{align}
are functions such that
\begin{align}\label{uwowgeqaacc}
\ow(s,t) \geq a \qquad \text{for all }(s,t)\in(0,R^n]\times[t_0,T)
\end{align}
and
\begin{align}\label{comp2}
\uw_t, \uw_s, \ow_t, \ow_s \quad \text{belong to } C^0((0,R^n)\times(t_0,T)),
\end{align}
and that for each $t\in(t_0,T)$ there exists a discrete set $N(t) \subset (0,R^n)$ with the property that
\begin{align}\label{compregt}
\uw(\cdot,t) \in C^2((0,R^n)\setminus N(t)) \quad \text{and} \quad \ow(\cdot,t) \in C^2((0,R^n)\setminus N(t)),
\end{align}
and that
\begin{align}\label{compdiffinequ}
\uw_t(s,t) \leq n^2 s^{2-\f2n} \uw_{ss}(s,t) + n \uw(s,t)\cdot f(\uw_s(s,t)) - \mu s \uw_s(s,t) \qquad \text{for all } t \in (t_0,T)~\text{and}~s \in (0,R^n)\setminus N(t)
\end{align}
as well as
\begin{align}\label{compdiffineqo}
\ow_t(s,t) \geq n^2 s^{2-\f2n} \ow_{ss}(s,t) + n \ow(s,t)\cdot f(\ow_s(s,t)) - \mu s \ow_s(s,t) \qquad \text{for all } t \in (t_0,T)~\text{and}~s \in (0,R^n)\setminus N(t)
\end{align}
for some $f \in W^{1,\infty}_{loc}(\R)$.
If moreover
\begin{align}\label{comp0}
\uw(0,t) \leq a \qquad \text{for all } t \in[t_0,T)
\end{align}
and
\begin{align}\label{compRn}
\uw(R^n,t) \leq \ow(R^n,t)  \qquad \text{for all } t \in[t_0,T),
\end{align}
as well as
\begin{align}\label{compinit}
\uw(s,t_0)\leq \ow(s,t_0) \qquad \text{for all } s \in(0,R^n],
\end{align}
then
\begin{align}\label{comporder}
\uw(s,t)\leq \ow(s,t) \qquad \text{for all } s \in(0,R^n]~\text{and}~t\in[t_0,T).
\end{align}
\end{lemma}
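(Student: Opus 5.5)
We argue by contradiction with an exponentially weighted difference and a maximum-principle argument on a shrinking family of compact subdomains, using the quasi-boundary condition \eqref{uwowgeqaacc}--\eqref{comp0} to keep the maximum of the difference away from $s=0$.

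\textbf{Setup.} Fix $T_0\in(t_0,T)$. By \eqref{comp1} there is $L>0$ with $|\uw_s|\le L$ a.e. in $(0,R^n)\times(t_0,T_0)$, so $|\uw|\le M$ on $[0,R^n]\times[t_0,T_0]$ for some $M$. Let $K$ be the Lipschitz constant of $f$ on $[-L-1,L+1]$ and $F:=\sup_{[-L-1,L+1]}|f|$. For $\eps>0$ and $\lambda>0$ to be chosen, define
\[
d(s,t):=\big(\uw(s,t)-\ow(s,t)\big)e^{-\lambda(t-t_0)}-\eps(1+t-t_0).
\]
We show $d\le0$ on $(0,R^n]\times[t_0,T_0]$ and then let $\eps\searrow0$.

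\textbf{Excluding a positive maximum.} Suppose $d>0$ somewhere and let $t_1$ be the infimum of such times. At $s=R^n$ we have $d<0$ by \eqref{compRn}, and at $t=t_0$ we have $d<0$ by \eqref{compinit}. Near $s=0$, continuity of $\uw$ up to $0$ together with \eqref{comp0} and \eqref{uwowgeqaacc} gives $\uw-\ow\le \uw(s,t)-a\le \uw(s,t)-\uw(0,t)\le Ls$. Hence $\uw-\ow\le \eps/2$ on $(0,s_\eps]\times[t_0,T_0]$ with $s_\eps>0$ small, which is uniform in $t$ thanks to the Lipschitz bound; this is where the lack of continuity of $\ow$ at $0$ is compensated. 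Thus positivity of $d$ can only occur in a compact set $[s_\eps,R^n-\kappa]\times(t_0,T_0]$. Continuity of $d$ then yields a point $(s_1,t_1)$ with $d(s_1,t_1)=0$ and $d\le0$ for $t\le t_1$, which is a first-touching point in the interior.

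\textbf{Derivatives at the touching point, and the main obstacle.} Here $d_s(s_1,t_1)=0$ (both $\uw_s,\ow_s$ are continuous), $d_t\ge0$, and, if $s_1\notin N(t_1)$, $d_{ss}(s_1,t_1)\le0$. Using $\uw_s=\ow_s=:p$ with $|p|\le L$, subtract \eqref{compdiffineqo} from \eqref{compdiffinequ}:
\[
(\uw-\ow)_t\le n^2s^{2-\frac2n}(\uw-\ow)_{ss}+n f(p)(\uw-\ow).
\]
At the touching point $\uw-\ow=\eps(1+t_1-t_0)e^{\lambda(t_1-t_0)}>0$. Then $d_t\ge0$ gives $(\uw-\ow)_t\ge \lambda(\uw-\ow)+\eps e^{\lambda(t_1-t_0)}$. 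Combined with the display, this yields $\lambda(\uw-\ow)+\eps e^{\lambda(t_1-t_0)}\le nF(\uw-\ow)$, which is impossible for $\lambda\ge nF$. The Lipschitz continuity of $f$ is not even needed, since the derivatives coincide.

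The main obstacle is the case $s_1\in N(t_1)$, where second derivatives need not exist. Since $\uw_s$ and $\ow_s$ are continuous and $N(t_1)$ is discrete, I will use a one-sided argument. The function $d(\cdot,t_1)$ has a local maximum at $s_1$ and is $C^1$, and it is $C^2$ on punctured neighbourhoods. We cannot evaluate there directly, so we perturb: replace $d$ by $d-\beta(s-s_1)^2$-type localisations. Alternatively, and more robustly, we note that $d(\cdot,t_1)$ is $C^1$ with $d_s(s_1)=0$, so there are points $s_k\to s_1$, $s_k\notin N(t_1)$, with $d_{ss}(s_k,t_1)\le 1/k$. Otherwise $d_s$ would be strictly increasing near $s_1$, contradicting the maximum. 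However, at $s_k$ we only know $d$ is near $0$ and $d_s$ is near $0$, not $d_t\ge0$. To handle this, I would instead take the touching time slightly earlier: work with the maximum of $d-\beta|s-s_1|^2$ over $[s_\eps,R^n]\times[t_0,t_1]$ and use the continuity of $\uw_t,\ow_t$ together with the $C^1$-in-$s$ structure to pass to the limit $k\to\infty$ in the inequality. This uses the continuity in \eqref{comp2} and the Lipschitz property of $f$ to control $f(\uw_s)-f(\ow_s)$ at the nearby points, where the derivatives differ by $o(1)$. The resulting error terms are $K\,|\uw|\,o(1)$, which vanish in the limit and yield the same contradiction. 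Finally we let $T_0\nearrow T$ and $\eps\searrow0$.
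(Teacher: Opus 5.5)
Your proposal is correct and is essentially the paper's proof. The ingredients match: a penalized difference (your multiplicative weight $e^{-\lambda(t-t_0)}$ with $-\eps(1+t-t_0)$ instead of the paper's additive $\eta e^{\kappa(t-t_0)}$), a cutoff near $s=0$ from $\uw(0,t)\le a\le\ow$, and a first interior touching point. In the exceptional case $s_1\in N(t_1)$ you likewise choose points $s_k\to s_1$ off $N(t_1)$ with $d_{ss}(s_k,t_1)\le 1/k$, evaluate \eqref{compdiffinequ}--\eqref{compdiffineqo} there, and let $k\to\infty$.

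Your detour in that exceptional case can be dropped. You never need $d_t(s_k,t_1)\ge0$: $d_t(\cdot,t_1)$ is continuous by \eqref{comp2}, so $d_t(s_k,t_1)\to d_t(s_1,t_1)\ge0$. The penalization by $\beta|s-s_1|^2$ would not help anyway, since its maximum is still attained at $(s_1,t_1)$ and so does not move the point off $N(t_1)$. Mere continuity of $f$ already suffices for the error term; the Lipschitz property is not needed.
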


\begin{bew}
In order to prepare our contradictory argument, we fix $T_0 \in(t_0,T)$ and then obtain from (\ref{comp1}) and (\ref{comp2}) that $\uw_s$ is bounded and continuous on $(0,R^n)\times (t_0,T_0)$, so that it is possible to fix some large $\kappa>0$ such that
\begin{align}\label{compkappa}
\kappa \geq 2n |f(\uw_s(s,t))| \qquad \text{for all}~s\in(0,R^n)~\text{and}~t\in(t_0,T_0).
\end{align}
We next fix $\eta>0$ and may then use (\ref{comp0}) along with the uniform continuity of $\uw$ in $[0,R^n]\times[t_0,T_0]$ to find some suitably small $s_*\in(0,R^n)$ such that
\begin{align}\label{competa}
\uw(s,t) \leq a+\f\eta 2 \qquad \text{for all}~(s,t)\in[0,s_*]\times[t_0,T_0].
\end{align}
Thereupon, we can introduce
\[
z(s,t):= \ow(s,t) - \uw(s,t) + \eta e^{\kappa(t-t_0)}, \qquad s\in[s_*,R^n],~t\in[t_0,T_0].
\]
Then $z$ belongs to $C^0([s_*,R^n]\times [t_0,T_0])$ and satisfies
\[
z(s,t_0) \geq  \eta > 0 \qquad \text{for all } s\in[s_*,R^n],
\]
due to (\ref{compinit}), where in particular
\[
\bar t := \sup \{t\in(t_0,T_0)~\vert~z>0~\text{in}~[s_*,R^n]\times[t_0,t] \}
\]
is well-defined. Now if we had $\bar t<T_0$, then there would exist $\bar s \in [s_*,R^n]$ such that
\begin{align}\label{comp0cross}
z(\bar s, \bar t) = 0,
\end{align}
where since
\[
z(s_*,t)\geq a-\uw(s_*,t)+\eta e^{\kappa(t-t_0)} \geq -\f\eta 2 + \eta > 0 \qquad \text{for all}~t\in[t_0,T_0]
\]
by \eqref{uwowgeqaacc} and (\ref{competa}), and since
\[
z(R^n,t) \geq \eta e^{\kappa(t-t_0)} \geq \eta > 0 \qquad \text{for all}~t\in[t_0,T_0]
\]
thanks to (\ref{compRn}), we have that actually $\bar s \in (s_*,R^n)$ and hence clearly
\begin{align}\label{compscross}
z_s(\bar s, \bar t) = 0,
\end{align}
that is $\ow_s(s,t)=\uw_s(s,t)$, and
\begin{align}\label{comptcross}
z_t(\bar s, \bar t)\leq 0
\end{align}
by definition of $\bar t$.\\
Now in order to make appropriate use of (\ref{compregt}), (\ref{compdiffinequ}) and (\ref{compdiffineqo}), we note that both in the generic case when $\bar s \notin N(\bar t)$ as well as in the exceptional situation when $\bar s \in N(\bar t)$, by discreteness of $N(\bar t)$ we can find $(s_j)_{j\in\N} \subset (s_*, \bar s)\setminus N(\bar t)$ such that $s_j \nearrow \bar s$ as $j \to \infty$ and
\begin{align}\label{compzss}
\limsup_{j\to\infty} z_{ss}(s_j,\bar t) \geq 0,
\end{align}
for otherwise there would exist $\hat s \in (s_*, \bar s)$ and $c_1>0$ such that $(\hat s, \bar s) \cap N(\bar t) = \emptyset$ and
\[
z_{ss}(s,\bar t) \leq -c_1 \qquad \text{for all } s \in(\hat s, \bar s),
\]
together with (\ref{comp0cross}) and (\ref{compscross}) leading to the absurd conclusion that
\[
z(s, \bar t) \leq -\f{c_1}2(\bar s - s)^2 < 0 \qquad \text{for all } s\in(\hat s, \bar s)
\]
(confer \cite[Lemma 5.1]{Bellomo2017}). Thus having (\ref{compzss}), we may proceed in a standard manner by firstly evaluating (\ref{compdiffinequ}) and (\ref{compdiffineqo}) at $s=s_j$ and $t = \bar t$ to see that
\begin{align*}
z_t(s_j,\bar t) \geq& n^2 s_j^{2-\f2n} z_{ss}(s_j,\bar t) \\
&+ n \ow(s_j, \bar t) \Big\{ f(\ow_s(s_j, \bar t))-f(\uw_s(s_j, \bar t)) \Big\}+ n z(s_j, \bar t) f(\uw_s(s_j, \bar t)) \\
&-\mu s_j z_s(s_j, \bar t) + \Big \{ \kappa - n f(\uw_s(s_j, \bar t)) \Big\}\cdot \eta e^{\kappa (\bar t-t_0)}
\end{align*}
for all $j \in \N$, and by then letting $j \to \infty$ to infer that (\ref{compzss}), (\ref{compscross}), (\ref{comptcross}), (\ref{comp0cross}) and the continuity of $z_t(\cdot,\bar t),~z_s(\cdot, \bar t)$ and $z(\cdot, \bar t)$ allow for the conclusion that
\[
0 \geq z_t(\bar s, \bar t) \geq \Big\{\kappa - n f(\uw_s(\bar s, \bar t)) \Big\} \cdot \eta e^{\kappa (\bar t-t_0)} > 0
\]
due to (\ref{compkappa}).\\
This contradiction shows that indeed $\bar t=T_0$ and that thus
\[
\ow(s,t) \geq \uw(s,t) - \eta e^{\kappa(t-t_0)} \qquad \text{for all } s\in [s_*,R^n]~\text{and}~t\in[t_0,T_0].
\]
Together with (\ref{competa}), again thanks to \eqref{uwowgeqaacc}, this entails that in fact
\[
\ow(s,t) \geq \uw(s,t) - \eta e^{\kappa(T_0-t_0)} \qquad \text{for all } s\in(0,R^n),~t\in[t_0,T_0),
\]
so that since our choice of $\kappa$ did not depend on $\eta$, we may firstly let $\eta \searrow 0$ and then $T_0 \nearrow T$ to end up with (\ref{comporder}).
\end{bew}

In order to construct solutions to (\ref{0wacc}) in a manner such that $w_s$ is nonnegative, as an approximation we consider

\begin{align}\label{0wepsacc}
\left\lbrace
\begin{array}{r@{}l@{\quad}l@{\quad}l@{\,}c}
	&\wepst = n^2 s^{2-\frac{2}{n}} \wepsss + n\weps f_\eps(\wepss) - \mu s \wepss,
	\qquad & s\in (0,R^n), \ t>0, \\[1mm]
	&\weps(0,t)=0, \quad \weps(R^n,t)=\frac{m}{\omega_n},
	\qquad & t>0, \\[1mm]
	& \weps(s,0)=w_{0\eps}(s):= \min \{\f s\eps, w_0(s) \}, \qquad
	& s\in (0,R^n),
\end{array}\right.
\end{align}
for $\eps \in (0,\f{R^n \omega_n}{m})$, where $(f_\eps)_{\eps \in (0,\f{R^n \omega_n}{m})} \subset C^\infty([0,\infty))$ is increasing with respect to $\eps$ and satisfies
\begin{align}\label{fepsacc}
\left\lbrace
\begin{array}{r@{}l@{\quad}l@{\quad}l@{\,}c}
	&f_\eps(\xi) = \xi, \quad \text{for all } \xi < \f1\eps,\\[1mm]
	&f_\eps \leq \f1\eps +1, \\[1mm]
	&f_\eps' \geq 0.
\end{array}\right.
\end{align}
To that end, we first regard yet another class of approximating systems which are non-degenerate.

\begin{lemma}\label{wepsdlem}
Let $n\geq 1$, $R>0$, $m>0$, $\mu := \f{mn}{\omega_n R^n}$, $\eps \in (0,\f{R^n \omega_n}{m})$ and suppose $w_0$ satisfies (\ref{initacc}).\\
For $\nu \in (0,1)$, consider
\begin{align}\label{0wepsdacc}
\left\lbrace
\begin{array}{r@{}l@{\quad}l@{\quad}l@{\,}c}
	&\wepsdt = (\nu+n^2 s^{2-\frac{2}{n}}) \wepsdss + n\wepsd f_\eps(\wepsds) - \mu s \wepsds,
	\qquad & s\in (0,R^n), \ t>0, \\[1mm]
	&\wepsd(0,t)=0, \quad \wepsd(R^n,t)=\frac{m}{\omega_n},
	\qquad & t>0, \\[1mm]
	& \wepsd(s,0)=w_{0\eps\nu}(s), \qquad
	& s\in (0,R^n),
\end{array}\right.
\end{align}
with $f_\eps \in C^\infty([0,\infty))$ as in (\ref{fepsacc}), and where $(w_{0\eps\nu})_{\nu\in(0,1)}\subset C^2([0,R^n])$ is such that
\begin{align*}
w_{0\eps\nu s} \geq 0, \quad w_{0\eps\nu}(0)=0, \quad w_{0\eps\nu}(R^n)=\f m{\omega_n}, \quad \|w_{0\eps\nu}\|_{W^{1,\infty}((0,R^n))} \leq c_0(\eps)
\end{align*}
for some $c_0(\eps)>0$ and for all $\nu\in(0,1)$, and
\begin{align}\label{w0epsdconv}
w_{0\eps\nu} \to w_{0\eps} \qquad \text{in}~L^\infty((0,R^n)) \quad \text{as}~\nu \searrow 0.
\end{align}
Then (\ref{0wepsacc}) has a unique global classical solution $\wepsd \in C^0([0,R^n]\times[0,\infty)) \cap C^\infty((0,R^n]\times(0,\infty))$ which has the property that $\wepsds \geq 0$ in $(0,R^n)\times(0,\infty)$.
\end{lemma}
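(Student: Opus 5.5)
We treat \eqref{0wepsdacc} as a uniformly parabolic semilinear Dirichlet problem on the fixed interval $(0,R^n)$. The diffusion coefficient $\nu+n^2s^{2-\frac2n}$ is smooth on $(0,R^n]$, bounded below by $\nu>0$ and Hölder continuous up to $s=0$. The nonlinearity $n w f_\eps(w_s)$ has at most linear growth in $w$, because $f_\eps$ is bounded by $\frac1\eps+1$; note that $f_\eps$ is only defined on $[0,\infty)$, so one first extends it to $\R$, e.g.\ by $f_\eps(\xi)=\xi$ for $\xi<0$, which is consistent with \eqref{fepsacc}. The plan is to obtain local existence and uniqueness by standard theory, then establish a priori bounds on $w$ and $w_s$, and finally prove $\wepsds\ge0$ by a comparison argument applied to the derivative.

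\textbf{Step 1 (local existence and uniqueness).} The initial data $w_{0\eps\nu}\in C^2([0,R^n])$ satisfy the zeroth-order compatibility conditions $w_{0\eps\nu}(0)=0$ and $w_{0\eps\nu}(R^n)=\frac m{\omega_n}$. Ladyzhenskaya--Solonnikov--Ural'tseva theory, or a Banach fixed point in $C^{1+\alpha,\frac{1+\alpha}2}$, then yields a unique classical solution on a maximal interval $(0,T)$. The extensibility criterion for such quasilinear problems requires only a bound on $\|\wepsd\|_{W^{1,\infty}}$. Interior smoothness in $(0,R^n]\times(0,\infty)$ follows by bootstrapping, since the coefficients are $C^\infty$ away from $s=0$. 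Uniqueness follows from Lemma~\ref{halfcomparison} with $a=0$, used in both directions: each solution is Lipschitz in $s$ on $[t_0,T_0]$, and the symmetric roles of the two solutions give equality.

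\textbf{Step 2 (bounds and monotonicity).} Comparison with the constants $0$ and $\frac m{\omega_n}$ gives $0\le\wepsd\le\frac m{\omega_n}$ once monotonicity is known. More directly, the constant $\frac m{\omega_n}$ is a supersolution only if $f_\eps(0)\cdot w\le0$, and $f_\eps(0)=0$ holds, so this works. For the sign of $z:=\wepsds$ we differentiate the equation:
\[
z_t=(\nu+n^2s^{2-\frac2n})z_{ss}+\big(c_1(s)+n\wepsd f_\eps'(z)-\mu s\big)z_s+\big(nf_\eps(z)-\mu\big)z,
\]
where $c_1(s)=2(n^2-n)s^{1-\frac2n}$ and we have used $n z f_\eps(z)=n(f_\eps(z)/z)\,z^2$. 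Since $f_\eps(z)=z$ near $0$, this is a linear parabolic equation for $z$ with a bounded zeroth-order coefficient $nf_\eps(z)-\mu$. On the boundary, $s=R^n$ and $s=0$ are Dirichlet points for $w$, with $0\le\wepsd\le\frac m{\omega_n}$ inside, $w=0$ at $s=0$ and $w=\frac m{\omega_n}$ at $s=R^n$. Hence $z(0,t)\ge0$ and $z(R^n,t)\ge0$ by the sign of one-sided difference quotients. Together with $z(\cdot,0)\ge0$, the maximum principle gives $z\ge0$. Running this argument needs $z$ to be continuous up to the boundary, i.e.\ $C^{1,0}$ regularity of $w$ at $s=0$ and $s=R^n$, which holds thanks to non-degeneracy.

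\textbf{Step 3 (gradient bound, main obstacle).} We must bound $z$ from above on finite time intervals so that the solution is global. With $z\ge0$, the zeroth-order coefficient satisfies $nf_\eps(z)-\mu\le n(\frac1\eps+1)$. The interior maximum principle therefore yields $z\le e^{n(\frac1\eps+1)t}\max\{\|z(\cdot,0)\|_\infty,\sup_{\text{boundary}}z\}$. The difficulty is controlling $z$ at the boundary. There we build barriers: near $s=0$ we compare $w$ with $\ow(s)=Ks\wedge\frac m{\omega_n}$, and near $s=R^n$ with $\frac m{\omega_n}-K(R^n-s)$. These are super- and subsolutions for large $K$ precisely because $\nu>0$ and $f_\eps$ is bounded. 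This gives $|z|\le K$ at both endpoints, and hence a $W^{1,\infty}$ bound on every $[0,T]$. The extensibility criterion then yields $T=\infty$.
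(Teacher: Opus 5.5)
Your Steps 1 and 2 are essentially the paper's proof. The paper gets the global classical solution directly from standard parabolic theory, using $\nu>0$ and the linear growth of $n\xi_1f_\eps(\xi_2)-\mu s\xi_2$. It then obtains $0\le\wepsd\le\frac m{\omega_n}$ by comparison with the two constants, reads off $\wepsds\ge0$ at $s=0$ and $s=R^n$, and applies the maximum principle to the linear equation satisfied by $z=\wepsds$.

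The problem is Step 3, which you add to prove globality yourself: the barriers you name are not super-/subsolutions. An affine function has vanishing second derivative, so $\nu>0$ contributes nothing, and the first-order terms have the wrong sign.
\begin{itemize}
\item For $\ow=Ks$ the supersolution inequality reads $0\ge Ks\,(nf_\eps(K)-\mu)$. Since $f_\eps(K)\ge\min\{K,\frac1\eps\}$ and $\frac1\eps>\frac\mu n$ (this is where $\eps<\frac{R^n\omega_n}m$ enters), it holds only for $K\le\frac\mu n$. Such a $Ks$ lies above $w_{0\eps\nu}$ only in the special case $w_{0\eps\nu}\le\frac\mu n s$.
\item For $\uw=\frac m{\omega_n}-K(R^n-s)$ the subsolution inequality at $s=R^n$ reads $0\le\frac{nm}{\omega_n}(f_\eps(K)-K)$, which is false for $K>1+\frac1\eps$. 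Near the zero of $\uw$ the right-hand side is about $-\mu sK<0$, so it fails there too.
\end{itemize}
So Step 3 as written gives no boundary gradient bound, and hence no global existence.

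This is repairable.
\begin{itemize}
\item At $s=0$, use the time-dependent barrier $c\,e^{n(1+\frac1\eps)t}s$ with $c\ge\|w_{0\eps\nu s}\|_{L^\infty}$. It is a supersolution precisely because $f_\eps\le1+\frac1\eps$; this is the computation the paper carries out in Lemma \ref{wepsolacc}. It gives $z(0,t)\le c\,e^{n(1+\frac1\eps)t}$.
\item At $s=R^n$ you need convexity, so that $\nu\uw_{ss}$ absorbs $\mu s\uw_s$. For instance, take $\max\{0,\frac m{\omega_n}-\frac AK(1-e^{-K(R^n-s)})\}$ with $A=\frac{2mK}{\omega_n}$, and $K$ large enough that $\nu K\ge\mu R^n$, $K\ge\frac{c_0(\eps)\omega_n}m$ and $KR^n>\ln2$. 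This is a maximum of two subsolutions, lies below $\wepsd$ initially, and yields $z(R^n,t)\le A$.
\end{itemize}
Alternatively, drop Step 3 and cite the global solvability theory for bounded solutions of one-dimensional uniformly parabolic problems, as the paper does.

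A minor inconsistency: with your extension $f_\eps(\xi)=\xi$ for $\xi<0$, the bound $|f_\eps|\le\frac1\eps+1$ that you invoke for linear growth no longer holds on $\R$. This is harmless for the existence theory, but the justification should be adjusted.
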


\begin{bew}
Since $\nu+n^2 s^{2-\frac{2}{n}}\geq \nu > 0$ for all $s\in(0,R^n)$, and since the source term in (\ref{0wepsdacc}) grows only linearly with respect to $(\wepsd,\wepsds)$ in the sense that
\[
|n \xi_1 f_\eps(\xi_2)- \mu s \xi_2| \leq n\Big(1+\f1\eps\Big)|\xi_1| + \mu R^n |\xi_2| \qquad \text{for all }(\xi_1,\xi_2)\in\R^2~\text{and}~s\in(0,R^n)
\]
according to our construction of $f_\eps$, standard parabolic theory (\cite{Ladyzhenskaja1968}) warrants the existence of a global classical solution $\wepsd$ of (\ref{0wepsdacc}) belonging to the indicated regularity class.\\
As $0\leq w_{0\eps\nu}\leq \f m{\omega_n}$ in $(0,R^n)$, two applications of a standard comparison principle to the constant functions mapping $[0,R^n]\times[0,\infty)$ to $0$ and $\f m{\omega_n}$, respectively, show that $0\leq \wepsd \leq \f m{\omega_n}$ in $[0,R^n]\times[0,\infty)$, which in particular entails that $\wepsds(0,t)$ and $\wepsds(R^n,t)$ are both nonnegative for all $t>0$. Now in view of known results on gradient regularity in scalar parabolic equations (\cite{Ladyzhenskaja1968}), the assumed inclusion $w_{0\eps\nu}\in C^2([0,R^n])$ warrants that $z:=w_{\eps\nu s}$ actually lies in $C^0([0,R^n]\times[0,\infty)) \cap C^{2,1}((0,R^n)\times(0,\infty))$.\\
Therefore, upon observing that
\[
z_t = a_1(s,t) z_{ss} + a_2(s,t) z_s + a_3(s,t) z \qquad \text{in}~(0,R^n)\times(0,\infty)
\]
with
\begin{align*}
&a_1(s,t):= \nu + n^2 s^{2-\f2n},\\
&a_2(s,t):= 2n(n-1)s^{1-\f2n} + n \wepsd(s,t) f_\eps'(\wepsds(s,t))-\mu s \qquad \text{and}\\
&a_3(s,t):= n f_\eps(\wepsds(s,t))-\mu s
\end{align*}
for $(s,t)\in (0,R^n)\times(0,\infty)$, we may once more employ a comparison argument to conclude from the nonnegativity of $w_{0\eps\nu s}$ that indeed also $\wepsds \geq 0$ in $(0,R^n)\times(0,\infty)$.
\end{bew}

By means of a limit process utilizing standard compactness arguments, we can now construct solutions of (\ref{0wepsacc}).

\begin{lemma}\label{wepsolacc}
Suppose that the requirements of Lemma \ref{wepsdlem} are met and $\eps \in (0,\f{R^n\omega_n}m)$. Then there exists $(\nu_j)_{j\in\N}$ such that $\nu_j \searrow 0$ for $j\to\infty$, and 
\[
\wepsd \to \weps~\text{in}~C^0([0,R^n]\times[0,\infty)) \cap C^{2,1}_{loc}((0,R^n]\times(0,\infty)) \qquad \text{as }\nu=\nu_j \searrow 0,
\]
where $\weps \in C^0([0,R^n]\times[0,\infty)) \cap C^{2,1}((0,R^n]\times(0,\infty))$ is a classical solution of (\ref{0wepsacc}) fulfilling $\wepss \geq 0$ in $(0,R^n)\times(0,\infty)$.
\end{lemma}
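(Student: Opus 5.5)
We will derive $\nu$-independent estimates for $\wepsd$ and then pass to the limit $\nu\searrow 0$ by Arzelà–Ascoli, with the limit identified through the equation.

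\textbf{Uniform bounds.} From Lemma \ref{wepsdlem} we already know $0\le \wepsd\le \f m{\omega_n}$ and $\wepsds\ge 0$.

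\textbf{Equicontinuity near $s=0$.} The key additional ingredient is a uniform Lipschitz-type bound at the degenerate boundary. We will obtain it by comparison using a barrier. By \eqref{fepsacc} we have $n\wepsd f_\eps(\wepsds)\le n\f m{\omega_n}(\f1\eps+1)=:K$, so $\wepsd$ is a subsolution of $z_t\le(\nu+n^2s^{2-\f2n})z_{ss}-\mu s z_s+K$.

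\emph{Candidate barrier.} We take $\ow(s,t):=\min\{Ls+Kt,\f m{\omega_n}\}$, or better a concave barrier such as $\ow(s)=A s^{\alpha}$ with $\alpha\in(0,1)$ close to $1$. Concavity makes the diffusion term help. For the linear choice $Ls+Kt$ the term $-\mu sL$ has the wrong sign, so we compensate by adding $\mu L R^n t$. For $L\ge c_0(\eps)$ this gives the upper bound $\wepsd(s,t)\le Ls+C(\eps)t$ on $[0,T]$.

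\emph{Getting a modulus of continuity.} This time-growing bound only yields a modulus of continuity locally in time. Combined with the fixed value at $R^n$, it gives equicontinuity at $s=0$ uniformly in $\nu$ on $[0,R^n]\times[0,T]$. We can sharpen it with the stationary concave barrier $\ow(s)=\min\{B s^{\alpha},\f m{\omega_n}\}$. For this barrier the term $n^2 s^{2-\f2n}\alpha(\alpha-1)Bs^{\alpha-2}$ is of order $-s^{\alpha-\f2n}$, and it must dominate $K+\mu\alpha Bs^{\alpha}$. This works when $\alpha<\f2n$ only near $0$, which is sufficient locally. The comparison principle used is Lemma \ref{halfcomparison} with $a=0$ (or a classical one, since $\wepsd$ is continuous). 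The roles are $\uw=\wepsd$ as subsolution and $\ow$ as supersolution. The initial ordering comes from $w_{0\eps\nu}\le c_0(\eps)s$, and boundary ordering from $\ow\ge \f m{\omega_n}$ at the cut.

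\textbf{Equicontinuity near $t=0$.} Here we use \eqref{w0epsdconv} together with similar barriers placed around the initial data, namely $w_{0\eps\nu}\pm(\text{small}+Ct)$-type sub- and supersolutions built from the uniform $W^{1,\infty}$ bound.

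\textbf{Interior estimates.} On compact subsets of $(0,R^n]\times(0,\infty)$ the diffusion coefficient is bounded below by $n^2s_0^{2-\f2n}$ independently of $\nu$. We obtain uniform Hölder bounds via Ladyzhenskaja-type theory, first for the linearised problem with bounded coefficients, and then Schauder bounds in $C^{2+\beta,1+\beta/2}$ up to $s=R^n$ (Dirichlet). For the latter we use that $f_\eps$ is smooth with bounded derivative on the range where the gradient is bounded; the gradient bound itself comes from Bernstein-type or standard quasilinear gradient estimates.

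\textbf{Passage to the limit.} Arzelà–Ascoli and a diagonal argument give $\nu_j\searrow 0$ with convergence in $C^0([0,R^n]\times[0,T])$ for every $T$ and in $C^{2,1}_{loc}((0,R^n]\times(0,\infty))$. The limit $\weps$ satisfies \eqref{0wepsacc} pointwise, including $\weps(0,t)=0$, $\weps(R^n,t)=\f m{\omega_n}$, and $\weps(\cdot,0)=w_{0\eps}$ by \eqref{w0epsdconv}. Also $\wepss\ge0$ is inherited from $\wepsds\ge0$.

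\textbf{Main obstacle.} The step I expect to be hardest is the $\nu$-uniform continuity at the degenerate endpoint $s=0$, that is, the barrier construction near $s=0$. There I would first try the simple $Ls+Ct$ supersolution, which already suffices for the local-in-time $C^0$ convergence claimed.
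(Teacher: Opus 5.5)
\textbf{The failing step.} You treat the bound $\wepsd(s,t)\le Ls+C(\eps)t$ as sufficient, and it is not. It does not give equicontinuity at $s=0$ for positive times: as $s\searrow 0$ its right-hand side tends to $C(\eps)t$, not to $0=\wepsd(0,t)$. So it only controls the corner $(0,0)$, and the boundary value at $s=R^n$ is irrelevant here. The loss comes from replacing $n\wepsd f_\eps(\wepsds)$ by the constant $K$, which discards the fact that this source vanishes wherever $\wepsd$ does.

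\textbf{The paper's fix.} The paper keeps this structure. From $0\le f_\eps\le 1+\f1\eps$ it gets $n\wepsd f_\eps(\wepsds)\le\kappa(\eps)\wepsd$ with $\kappa(\eps)=n(1+\f1\eps)$. Then $\ow_\eps(s,t)=c_0(\eps)e^{\kappa(\eps)t}s$ is a supersolution that vanishes at $s=0$: here $\ow_{\eps ss}=0$, and $-\mu s\ow_{\eps s}\le 0$ helps. This gives the $\nu$-uniform Lipschitz bound $\wepsd(s,t)\le c_0(\eps)e^{\kappa(\eps)T}s$ on $[0,R^n]\times[0,T]$. Incidentally, the drift term also helps for your linear barrier, so the correction $\mu LR^nt$ is unnecessary.

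\textbf{Your concave barrier is the step you need.} It is not an optional sharpening, and it does work once stated correctly.
\begin{itemize}
\item Take $\alpha\in(0,\f2n)$, not close to $1$.
\item Then $Bs^\alpha$ satisfies $0\ge(\nu+n^2s^{2-\f2n})\ow_{ss}-\mu s\ow_s+K$ exactly for $s\le s_1(B):=\big(n^2B\alpha(1-\alpha)/K\big)^{\f n{2-n\alpha}}$.
\item The constant branch $\f m{\omega_n}$ is not a supersolution of the inequality with source $K$, so do not use the minimum.
\item Instead compare on $[0,s_c]$ with $s_c:=\big(\f m{\omega_n B}\big)^{\f1\alpha}$, using $\wepsd(s_c,t)\le\f m{\omega_n}=Bs_c^\alpha$ as lateral data.
\item Choose $B$ large enough that $s_c\le\min\{s_1(B),R^n\}$ and $B\ge c_0(\eps)R^{n(1-\alpha)}$. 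This is possible since $s_c\to0$ and $s_1(B)\to\infty$ as $B\to\infty$.
\end{itemize}
This yields $\wepsd\le Bs^\alpha$ near $s=0$, uniformly in $\nu$ and even in $t$. That Hölder modulus suffices.

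\textbf{Comparison principle.} Lemma \ref{halfcomparison} does not apply verbatim, because the diffusion coefficient contains the extra $\nu$. What you need is a classical maximum principle for the uniformly parabolic $\nu$-problem.

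\textbf{Remaining steps.} The interior Schauder bounds, Hölder continuity up to $t=0$ on $[s_0,R^n]$, extraction, and identification of the limit agree with the paper. For the $t=0$ estimate the paper simply cites \cite{Ladyzhenskaja1968}, rather than using your loosely described barriers.
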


\begin{bew}
Let $s_0\in(0,R^n)$ and $T>0$. Using that $0\leq \wepsd \leq \f m{\omega_n}$ in $(0,R^n)\times(0,\infty)$ by Lemma \ref{wepsdlem}, and that $0\leq f_\eps \leq 1+ \f1\eps$ on $[0,\infty)$, on applying standard parabolic Schauder estimates (\cite{Ladyzhenskaja1968}) to (\ref{0wepsdacc}), we obtain that for each fixed $\eps \in (0,\f{R^n\omega_n}m)$,
\begin{align}\label{wepsdinreg}
(\wepsd)_{\nu\in(0,1)}~ \text{is bounded in}~C^{2+\theta,1+\f\theta 2}_{loc}([s_0,R^n]\times(0,T])
\end{align}
for some $\theta=\theta(s_0)\in(0,1)$. Moreover, by our assumption that we have
\begin{align}\label{w0epsdW1}
c_1(\eps):=\sup_{\nu\in(0,1)} \|w_{0\eps\nu s}\|_{L^\infty((0,R^n))} < \infty \qquad \text{for all } \eps \in \Big(0,\f{R^n\omega_n}m\Big),
\end{align}
possibly after diminishing $\theta$ we can achieve that also
\begin{align}\label{wepsdoutreg}
(\wepsd)_{\nu\in(0,1)}~\text{is bounded in } C^{\theta,\f\theta 2}([s_0,R^n]\times[0,T]).
\end{align}
In order to derive some regularity information near $s=0$, given $\eps \in (0,\f{R^n\omega_n}m)$, we let $\kappa(\eps):=n(1+\f1\eps)$ and
\[
\ow_\eps(s,t):= c_1(\eps)\cdot e^{\kappa(\eps)t}\cdot s, \qquad s\in[0,R^n],~t\geq 0,
\]
and observe that then according to (\ref{w0epsdW1}),
\begin{align*}
w_{0\eps\nu}(s)&=w_{0\eps\nu}(0)+ \int\limits_0^s w_{0\eps\nu s}(\sigma)d\sigma \\
&\leq c_1(\eps) \cdot s\\
&=\ow_\eps (s,0) \qquad \text{for all } s\in[0,R^n],
\end{align*}
whence clearly also $\wepsd \leq \ow_\eps$ throughout $\{0,R^n\}\times[0,\infty)$ by positivity of $\kappa(\eps)$. Since
\begin{eqnarray*}
&&\ow_{\eps t} - (\nu + n^2 s^{2-\f2n})\ow_{\eps ss} - n \ow_\eps f_\eps(\ow_{\eps s})+\mu s \ow_{\eps s}\\
&=&c_1(\eps) \kappa(\eps)e^{\kappa(\eps)t}\cdot s - n c_1(\eps)e^{\kappa(\eps)t}\cdot s \cdot f_\eps(\ow_{\eps s}) + \mu s \ow_{\eps s}\\
& \geq& c_1(\eps) \kappa(\eps)e^{\kappa(\eps)t}\cdot s - n c_1(\eps)e^{\kappa(\eps)t}\cdot s \cdot \Big(1+\f1\eps\Big)\\
&=& 0 \qquad \text{in}~(0,R^n)\times(0,\infty)
\end{eqnarray*}
due to the inequalities $f_\eps \leq 1+\f1\eps$ and $\ow_{\eps s} \geq 0$, by comparison we conclude that $\wepsd \leq \ow_\eps$ in $[0,R^n]\times[0,\infty)$ for all $\nu\in(0,1)$. Hence, we obtain equicontinuity of $(\wepsd)_{\nu\in(0,1)}$ in $s$ at $s=0$ uniformly for $t\in[0,T]$ for each $\eps \in \Big(0,\f{R^n\omega_n}m\Big)$, as
\[
|\wepsd(s,t)-\wepsd(0,t)|=|\wepsd(s,t)|\leq c_1(\eps)e^{\kappa(\eps)T}\cdot s \qquad \text{for all }s\in[0,R^n],~t\in[0,T],~\nu\in(0,1),
\]
which even yields a Lipschitz bound. Together with (\ref{wepsdoutreg}) and the Arzel\`{a}-Ascoli theorem, this warrants that
\[
(\wepsd)_{\nu\in(0,1)}~\text{is relatively compact in}~C^0_{loc}([0,R^n]\times[0,\infty)).
\]
Along with (\ref{wepsdinreg}) and a further application of the Arzel\`{a}-Ascoli theorem, by means of a straightforward extraction procedure this readily yields $(\nu_j)_{j\in\N}\subset(0,1)$ with $\nu_j \searrow 0$ as $j\to\infty$ as well as a limit function $\weps=\lim\limits_{\nu=\nu_j \searrow 0} \wepsd$ approximated in the claimed topology and, as a consequence thereof and of (\ref{0wepsdacc}) and (\ref{w0epsdconv}), solving (\ref{0wepsacc}) classically and satisfying $\wepss\geq 0$.
\end{bew}

These solutions $w_\eps$ have an ordering property with respect to $\eps$. Therefore, the solution we construct for \eqref{0wacc} is obtained as the monotone limit of $w_\eps$ as $\eps \searrow 0$.

\begin{lemma}\label{createdsol}
Let $n\geq 1$, $R>0$, $m>0$, $\mu := \f{mn}{\omega_n R^n}$ and suppose $w_0$ satisfies (\ref{initacc}).\\
Then there exists a nonnegative function
\begin{align}\label{limitregacc}
w\in C^0((0,R^n]\times [0,\infty)) \cap C^{2,1}((0,R^n]\times(0,\infty))
\end{align}
such that
\begin{align}\label{pointwisewepsacc}
w_\eps \nearrow w \qquad \text{in}~(0,R^n]\times[0,\infty)
\end{align}
as well as
\begin{align}\label{wepswlimitacc}
w_\eps \to w \qquad \text{in}~C_{loc}^0((0,R^n]\times[0,\infty))\cap C_{loc}^{2,1}((0,R^n]\times(0,\infty))
\end{align}
as $\eps \searrow 0$. This limit function is a classical solution of (\ref{0wacc}) in $(0,R^n)\times(0,\infty)$ and satisfies 
\begin{align}\label{wsgeq0asaprep}
w_s \geq 0 \qquad \text{in }(0,R^n]\times (0,\infty);
\end{align}
in particular, $w(0,t):= \lim_{s \searrow 0} w(s,t)$ is well-defined for all $t>0$.
\end{lemma}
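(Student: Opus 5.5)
The plan is to show that $\eps\mapsto w_\eps$ is monotone, so the pointwise limit exists, and then to upgrade this to $C^{2,1}_{loc}$ convergence by interior parabolic regularity away from $s=0$.

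\textbf{Monotonicity.} Fix $0<\eps'<\eps$. Since $w_{0\eps}=\min\{\f s\eps,w_0\}\le \min\{\f s{\eps'},w_0\}=w_{0\eps'}$, we have $\weps\le w_{\eps'}$ at $t=0$. For the differential inequality we take $f=f_{\eps'}$ in Lemma \ref{halfcomparison}. Because $\weps\ge 0$ and $\wepss\ge0$ (Lemma \ref{wepsolacc}), and the family is arranged so that $f_\eps\le f_{\eps'}$ on $[0,\infty)$ for $\eps'<\eps$, we get
\[
\wepst=n^2s^{2-\f2n}\wepsss+n\weps f_\eps(\wepss)-\mu s\wepss\le n^2s^{2-\f2n}\wepsss+n\weps f_{\eps'}(\wepss)-\mu s\wepss .
\]
(If "increasing with respect to $\eps$" is meant in the other sense, the choice of $f$ is reversed; the point is only that the smaller $\eps$ gives the larger nonlinearity.) Thus $\uw:=\weps$ is a subsolution of the $f_{\eps'}$-equation and $\ow:=w_{\eps'}$ is a solution of it. Both are Lipschitz in $s$ up to $s=0$ locally in time, which is the Lipschitz bound from the proof of Lemma \ref{wepsolacc}. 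We have $\uw(0,t)=0=:a$ and $\ow\ge0=a$, and both equal $\f m{\omega_n}$ at $s=R^n$. Lemma \ref{halfcomparison} with $N(t)=\emptyset$ then gives $\weps\le w_{\eps'}$. One technical point: $f_{\eps'}\in C^\infty([0,\infty))$ must be extended to $\R$ as a $W^{1,\infty}_{loc}$ function, for instance by $f(\xi)=\xi$ for $\xi<0$. This is harmless, since only values at $\uw_s,\ow_s\ge0$ enter. Together with $0\le\weps\le\f m{\omega_n}$, monotonicity yields the pointwise limit $w_\eps\nearrow w$ in (\ref{pointwisewepsacc}), with $0\le w\le \f m{\omega_n}$ and $w_s\ge0$ in the distributional sense.

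\textbf{Regularity and convergence.} Fix $s_0\in(0,R^n)$ and $T>0$. On $[s_0,R^n]\times[0,T]$ the equation is uniformly parabolic, and its coefficients are bounded independently of $\eps$ except for the nonlinearity $n\weps f_\eps(\wepss)$. The main obstacle is an $\eps$-independent gradient bound for $\wepss$ on compacts away from $0$; once that holds, $f_\eps(\wepss)=\wepss$ for $\eps$ small and the equation becomes the $\eps$-free equation (\ref{0wacc}). I would get it as follows.
\begin{itemize}
\item[(i)] Use the uniform $L^\infty$ bound $0\le\weps\le\f m{\omega_n}$ and monotonicity in $s$. Since $\int_{s_0/2}^{R^n}\wepss\,ds\le \f m{\omega_n}$, there is an $L^1$ bound on $\wepss$ away from $0$.
\item[(ii)] Apply a Bernstein-type interior gradient estimate, or local Hölder estimates for the divergence-free equation $\wepst=a\wepsss+b\wepss$ with the drift $b=n\weps\f{f_\eps(\wepss)}{\wepss}-\mu s$ bounded by $n\f m{\omega_n}+\mu R^n$, since $0\le f_\eps(\xi)\le\xi$.
\end{itemize}
Step (ii) is where care is needed: $f_\eps(\xi)\le\xi$ for $\xi\ge0$ follows from $f_\eps(\xi)=\xi$ below $1/\eps$ together with the bound $f_\eps\le 1+\f1\eps$ and monotonicity. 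Viewing the equation as linear with bounded measurable drift, Krylov–Safonov gives uniform Hölder bounds. $W^{2,1}_p$ estimates then give a gradient bound, and Schauder estimates give $C^{2+\theta,1+\theta/2}$ bounds on $[s_0,R^n]\times[\tau,T]$.

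Near $t=0$, continuity up to $t=0$ on $[s_0,R^n]$ requires barrier arguments using $w_{0\eps}\to w_0$ uniformly on $[s_0,R^n]$ and the local Lipschitz regularity of $w_0$ there. Alternatively, Dini's theorem applies: $\weps$ is monotone in $\eps$, and if the limit is continuous the convergence is locally uniform. Arzelà–Ascoli with these bounds, combined with uniqueness of the pointwise limit, gives convergence of the whole family in (\ref{wepswlimitacc}). Passing to the limit in (\ref{0wepsacc}) with $f_\eps(\wepss)=\wepss$ eventually shows that $w$ solves (\ref{0wacc}) classically with $w(R^n,t)=\f m{\omega_n}$ and $w(\cdot,0)=w_0$. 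Finally, $w_s\ge0$ from the $C^1$ convergence makes $w(0,t)=\lim_{s\searrow0}w(s,t)$ a monotone limit, hence well-defined.
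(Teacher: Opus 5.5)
Your proposal is correct and follows essentially the same route as the paper. You obtain $w_\varepsilon\le w_{\varepsilon'}$ for $\varepsilon'<\varepsilon$ via Lemma~\ref{halfcomparison} applied with $f_{\varepsilon'}$, pass to the pointwise limit using the bound $\frac{m}{\omega_n}$, and conclude with $\varepsilon$-uniform H\"older and Schauder bounds on $[s_0,R^n]$ plus Arzel\`a--Ascoli. The one addition is that, where the paper just cites Ladyzhenskaja et al., you make the $\varepsilon$-independence explicit by treating $n w_\varepsilon f_\varepsilon(w_{\varepsilon s})$ as a drift with uniformly bounded coefficient $n w_\varepsilon f_\varepsilon(w_{\varepsilon s})/w_{\varepsilon s}$ — a useful clarification, since the bound $f_\varepsilon\le 1+\frac1\varepsilon$ alone would not give uniformity.
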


\begin{bew}
For $\eps',\eps \in (0,1)$ with $\eps' < \eps < \f{R^n\omega_n}m$, \eqref{fepsacc} implies that $f_\eps \leq f_{\eps'}$ and therefore $\weps$ as in Lemma \ref{wepsolacc} fulfills
\[
\wepst = n^2 s^{2-\frac{2}{n}} \wepsss + n\weps f_\eps(\wepss) - \mu s \wepss \leq n^2 s^{2-\frac{2}{n}} \wepsss + n\weps f_{\eps'}(\wepss) - \mu s \wepss
\]
for all $s\in(0,R^n)$ and $t>0$. Together with the regularity of $\weps$ established in Lemma \ref{wepsolacc} and the boundary information, Lemma \ref{halfcomparison} asserts that $\weps \leq w_{\eps'}$ in $(0,R^n)\times(0,\infty)$. Moreover, since $(\weps)_{\eps\in(0,1)}$ is bounded by $\f m{\omega_n}$, we may infer the existence of a function $w:(0,R^n)\times(0,\infty)\to \R$ such that \eqref{pointwisewepsacc} holds.\\
Furthermore, once again relying on standard parabolic Schauder estimates \cite[Theorem VII.6.1, Theorem VII.5.1]{Ladyzhenskaja1968}, for any $s_0 \in (0,R^n)$ we may find some $\theta\in(0,1)$ with the property that
\begin{align}
(\weps)_{\eps\in(0,1)}~ \text{is bounded in}~C^{2+\theta,1+\f\theta 2}_{loc}([s_0,R^n]\times(0,\infty))
\end{align}
and, by \cite[Theorem V.1.1]{Ladyzhenskaja1968}, possibly after diminishing $\theta \in (0,1)$,
\begin{align}
(\weps)_{\eps\in(0,1)}~\text{is bounded in } C^{\theta,\f\theta 2}_{loc}([s_0,R^n]\times[0,\infty)).
\end{align}
Thereupon, two applications of the Arzel\`{a}-Ascoli theorem and \eqref{pointwisewepsacc} result in
\begin{align*}
w_\eps \to w \qquad \text{in}~C_{loc}^0([s_0,R^n]\times[0,\infty))\cap C_{loc}^{2,1}([s_0,R^n]\times(0,\infty)),
\end{align*}
which, taking $s_0 \searrow 0$, yields \eqref{wepswlimitacc}, and consequently \eqref{limitregacc}. Recalling that $w_{0\eps}(s)=\min\{\f s\eps, w_0(s)\}$, we observe that $w_{0\eps} \to w_0$ in $L^\infty_{loc}((0,R^n))$ as $\eps \searrow 0$, whereupon the topology of convergence warrants that $w$ solves \eqref{0wacc} classically.\\
Finally, \eqref{wepswlimitacc} in conjunction with $w_{\eps s}\geq 0$ entails \eqref{wsgeq0asaprep}.
\end{bew}

This yields the minimal solution of \eqref{0wacc}; moreover, as a preparation for Lemma \ref{strictmonotonacclemma} and Lemma \ref{massaccumulatestotallyacc}, we observe that $w_s$ is positive throughout the interior.

\begin{lemma} \label{minimalsolacc}
Let $n\geq 1$, $R>0$, $m>0$, $\mu := \f{mn}{\omega_n R^n}$, assume \eqref{initacc}, and for $T\in(0,\infty]$ suppose that $w\in C^0((0,R^n]\times [0,T))\cap C^{2,1}((0,R^n]\times (0,T))$ is a nonnegative classical solution of \eqref{0wacc} in $(0,R^n)\times(0,T)$.\\
Then
\begin{align}\label{wbiggerthancreatedlimitsolacc}
w \geq \lim_{\eps \searrow 0} w_\eps \qquad \text{in }(0,R^n]\times[0,T),
\end{align}
where $w_\eps$ is as constructed in Lemma \ref{wepsolacc}, and, if $w_s$ is nonnegative,
\begin{align}\label{wsrealgreaterzeroacc}
w_s>0 \qquad \text{in }(0,R^n)\times(0,T).
\end{align}
\end{lemma}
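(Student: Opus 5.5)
The plan has two parts: minimality \eqref{wbiggerthancreatedlimitsolacc} via Lemma \ref{halfcomparison}, and strict positivity \eqref{wsrealgreaterzeroacc} via a strong maximum principle for $z:=w_s$.

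\emph{Minimality.} Fix $\eps\in(0,\f{R^n\omega_n}m)$ and compare $\uw:=\weps$ with $\ow:=w$ using Lemma \ref{halfcomparison}. We take $f:=f_\eps$ and $a:=0$, and we may choose $t_0=0$ or, if needed for regularity of $\uw_s$ near $t=0$, some small $t_0>0$ followed by a limit. The hypotheses are checked as follows.

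\begin{itemize}
\item[(a)] From Lemma \ref{wepsolacc}, $\weps\in C^0([0,R^n]\times[0,\infty))$. A Lipschitz bound of the form $\weps\le c_1(\eps)e^{\kappa(\eps)t}s$ passes to the limit $\nu\searrow 0$. Together with boundedness of $\wepss$ (to be justified by the gradient estimates behind \eqref{wepsdinreg}), this gives $\uw\in L^\infty_{loc}([0,T);W^{1,\infty})$.
\item[(b)] $\uw(0,t)=0=a$ and $\ow\ge 0=a$, since $w$ is nonnegative.
\item[(c)] Both functions equal $\f m{\omega_n}$ at $s=R^n$.
\item[(d)] Initially, $w_{0\eps}=\min\{s/\eps,w_0\}\le w_0$.
\item[(e)] The subsolution inequality for $\uw$ holds with equality.
\item[(f)] For $\ow$ we need $w_t\ge n^2s^{2-\f2n}w_{ss}+nwf_\eps(w_s)-\mu sw_s$. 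This follows from $w f_\eps(w_s)\le w w_s$, which holds because $f_\eps(\xi)\le\xi$ for $\xi\ge0$ (note $f_\eps(\xi)=\xi$ below $1/\eps$, $f_\eps'\ge0$, and $f_\eps\le 1+1/\eps$). This step needs $w_s\ge0$ and $w\ge0$.
\end{itemize}

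Item (f) is the delicate point. Without assuming $w_s\ge0$, I would instead run the comparison with $f(\xi)=\xi$ and use that $\weps$ is a subsolution for $f=\mathrm{id}$: indeed $\weps f_\eps(\wepss)\le \weps\wepss$, because $\weps,\wepss\ge0$. So I take $f(\xi)=\xi$, with the subsolution being $\weps$. Then Lemma \ref{halfcomparison} yields $\weps\le w$, and letting $\eps\searrow0$ gives \eqref{wbiggerthancreatedlimitsolacc}. The main technical check in this part is the $W^{1,\infty}$ bound on $\weps$ near $s=0$ for positive times, which I obtain from the linear barrier of Lemma \ref{wepsolacc} together with interior estimates.

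\emph{Strict positivity.} Assume $w_s\ge0$. Differentiating \eqref{0wacc} in $s$ (valid by interior parabolic regularity, since the equation is nondegenerate on $(0,R^n)$) gives a linear equation
\[
z_t=n^2s^{2-\f2n}z_{ss}+\big(2n(n-1)s^{1-\f2n}+nw-\mu s\big)z_s+(nz-\mu)z .
\]
This is uniformly parabolic with locally bounded coefficients on compact subsets of $(0,R^n)\times(0,T)$. Since $z\ge0$, the strong maximum principle applies. If $z(s_1,t_1)=0$ at an interior point, then $z\equiv0$ on $(0,R^n)\times(0,t_1]$. In that case $w(\cdot,t)$ is constant in $s$ for $t\le t_1$, so $w\equiv\f m{\omega_n}$ there, and by continuity also at $t=0$. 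This contradicts $w_0(s)=w(s,0)\to 0$ as $s\searrow0$ (from \eqref{initacc}, since $w_0\in C^0$ with $w_0(0)=0$ and $m>0$). I expect this step to be routine; the main obstacle overall is verifying the regularity hypotheses \eqref{comp1}–\eqref{comp2} for $\weps$ up to $s=0$.
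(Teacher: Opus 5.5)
Your proof is correct. The minimality part is the paper's own argument, and the strict-positivity part takes a different route. As in the paper, you use $\weps\ge0$, $\wepss\ge0$ and $f_\eps(\xi)\le\xi$ on $[0,\infty)$ to make $\weps$ a subsolution for $f=\mathrm{id}$, then apply Lemma \ref{halfcomparison} with $a=0$ from $t_0=0$. The paper likewise takes the needed $W^{1,\infty}$-regularity of $\weps$ from Lemma \ref{wepsolacc} without further comment.

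Two small remarks on this part:
\begin{itemize}
\item Your fallback of starting at some $t_0>0$ would not work, since $\weps(\cdot,t_0)\le w(\cdot,t_0)$ is then unknown. It is not needed.
\item The properties of $f_\eps$ you list give $f_\eps(\xi)\le\xi$ only for $\xi\notin(\f1\eps,\f1\eps+1)$. One should also impose $f_\eps'\le1$ when constructing $f_\eps$; the paper makes the same tacit assumption.
\end{itemize}

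For \eqref{wsrealgreaterzeroacc} you take a genuinely different, somewhat leaner route. The paper works on subintervals $[s_0,s_1]$. It chooses $s_0$ with $w_0(s_0)<\f m{\omega_n}$, so that the initial trace $z(\cdot,0)=w_{0s}$ is nontrivial there. It then invokes an abstract minimum principle with initial data, which requires $z$ to be controlled down to $t=0$ on $[s_0,s_1]$, and finally exhausts $(0,R^n)$.

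You instead use only the classical interior strong maximum principle on the whole open cylinder. Locally bounded coefficients and local uniform parabolicity suffice, and the sign of $nz-\mu$ is irrelevant because the minimum value is $0$. This propagates a hypothetical interior zero back to $(0,R^n)\times(0,t_1]$. You then derive the contradiction from $w$ rather than $w_s$: $w\equiv\f m{\omega_n}$ for $t\in(0,t_1]$, so by the assumed continuity of $w$ on $(0,R^n]\times[0,T)$, also $w_0\equiv\f m{\omega_n}$ on $(0,R^n]$. This is incompatible with $w_0(0)=0$.

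What each route buys:
\begin{itemize}
\item Your version needs no information on $w_s$ near $t=0$, where only $w_{0s}\in L^\infty_{loc}$ is available.
\item The paper's version gets positivity in one step from a minimum principle with nontrivial data, at the cost of working with $z$ up to the initial time.
\end{itemize}
Both use continuity of $w_0$ at $s=0$ in the same way.
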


\begin{bew}
Let $\eps \in (0,1)$ be such that $\eps<\f{R^n\omega_n}m$. We observe that \eqref{fepsacc} implies $f_\eps(\xi) \leq \xi $ for all $\xi \geq 0$. Consequently, the function $\weps$ as in Lemma \ref{wepsolacc} satisfies
\[
\wepst = n^2 s^{2-\frac{2}{n}} \wepsss + n\weps f_\eps(\wepss) - \mu s \wepss \leq n^2 s^{2-\frac{2}{n}} \wepsss + n\weps \wepss - \mu s \wepss
\]
for all $s\in(0,R^n)$ and $t\in(0,T)$. Combined with the regularity of $\weps$ established in Lemma \ref{wepsolacc} and the boundary information, particularly $w_{0\eps} \leq w_0$ in $[0,R^n]$ and $\weps(0,t)=0$ for all $t\in(0,T)$, we may apply Lemma \ref{halfcomparison} to conclude that $\weps \leq w$ in $(0,R^n]\times[0,T)$ for all $\eps \in (0,1)$ with $\eps<\f{R^n\omega_n}m$. Taking the limit $\eps \searrow 0$, this immediately results in \eqref{wbiggerthancreatedlimitsolacc}.\\
In order to confirm \eqref{wsrealgreaterzeroacc}, by standard parabolic theory, we may infer that for any $s_0,s_1 \in (0,R^n)$ with $s_0<s_1$, we have that $z:=w_s$ lies in $L^\infty_{loc}([s_0,s_1]\times[0,T) \cap C^{2,1}([s_0,s_1]\times(0,T))$ and solves
\begin{align*}
z_t &= n^2 s^{2-\f2n} z_{ss} + n^2 \Big(2-\f2n\Big)s^{1-\f2n} z_s + n wz_s + n z^2 - \mu s z_s - \mu z\\
&= n^2 s^{2-\f2n} z_{ss} + \bigg(n^2 \Big(2-\f2n\Big)s^{1-\f2n} + nw - \mu s\bigg)z_s + (nz-\mu)z
\end{align*}
for all $(s,t)\in(s_0,s_1)\times(0,T)$, where the right-hand side is elliptic due to $s\geq s_0>0$. We now let $s_0$ be sufficiently small such that $w_0(s_0)<\f m{\omega_n}$; then we have $z(\cdot,0)=w_s(\cdot,0) \not\equiv 0$ in $[s_0,s_1]$. In light of $w_s$ being nonnegative however, the minimum principle in \cite[Proposition 13.1]{Daners1992AbstractEE} asserts that necessarily $z > 0$ in $(s_0,s_1)\times(0,T)$. Finally letting $s_1 \nearrow R^n$ and $s_0 \searrow 0$, we thus find that indeed $w_s>0$ in $(0,R^n)\times(0,T)$.
\end{bew}

Therefore, we shall call $w:=\lim_{\eps \searrow 0} w_\eps$ the minimal nonnegative solution of \eqref{0wacc}.

\section{Collapse into Dirac singularity}

The key observation toward the collapse into a Dirac singularity will be that $w$ cannot become substantially bigger than $Ks^{-1+\f2n}$ with some $K>0$ in a suitable sense while simultaneously satisfying $w(0,t)=0$. The following lemma lays the groundwork for this conclusion.

\begin{lemma}\label{magicprep}
Let $n\geq 3$, $R>0$, $m>0$ and $\mu \geq 0$. Then for all $q \in(0,1)$, there exists $C=C(q,m,\mu,R,n)>0$ with the property that if for some $t_0\geq 0$ and $\tau>0$, $w\in C^0((0,R^n]\times[t_0,t_0+\tau]) \cap C^{2,1}((0,R^n]\times (t_0,t_0+\tau))$ is a nonnegative classical solution of the boundary value problem in (\ref{0wacc}) with $w_s\geq 0$ in $(0,R^n)\times(t_0,t_0+\tau)$, the inequality
\begin{align}\label{singweight}
\int\limits_{t_0}^{t_0+\tau} \int\limits_0^{R^n} s^{-1+\f2n} w^q(s,t)w_s(s,t) ds dt \leq C(1+\tau)
\end{align}
holds.
\end{lemma}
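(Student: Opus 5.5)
The plan is to test the equation in \eqref{0wacc} against the singular weight $s^{-1+\f2n}$ applied to a power of $w$ chosen so that the nonlinear term $nww_s$ produces exactly the integrand of \eqref{singweight}. Since $w(0,t)$ may vanish, I will regularize $w$ by a shift and cut off near $s=0$. Concretely, for $\eps\in(0,1)$, $\delta\in(0,R^n)$ and $t_0<t_1<t_2<t_0+\tau$, I consider
\[
F_{\eps,\delta}(t):=\f1q\int_\delta^{R^n} s^{-1+\f2n}(w+\eps)^q(s,t)\,ds .
\]
Since $0\le w\le \f m{\omega_n}$ (because $w_s\ge 0$ and $w(R^n,t)=\f m{\omega_n}$) and $\int_0^{R^n}s^{-1+\f2n}ds<\infty$, this functional is bounded by some $C_1(q,m,R,n)$ uniformly in $\eps,\delta,t$. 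Integrating its time derivative over $(t_1,t_2)$ therefore gives a quantity of size at most $2C_1$.

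Next I compute $F_{\eps,\delta}'$ term by term using the PDE. The chemotactic term gives $n\int_\delta^{R^n} s^{-1+\f2n}(w+\eps)^{q-1}ww_s\,ds$, which is the good term. The diffusion term $n^2\int_\delta^{R^n} s(w+\eps)^{q-1}w_{ss}\,ds$ is integrated by parts once and yields:
\begin{itemize}
\item $n^2(1-q)\int_\delta^{R^n} s(w+\eps)^{q-2}w_s^2\,ds\ge 0$;
\item $-\f{n^2}q\big[(w+\eps)^q\big]_\delta^{R^n}\ge -C$;
\item the boundary term $n^2R^n(\f m{\omega_n}+\eps)^{q-1}w_s(R^n,t)\ge0$;
\item the dangerous term $-n^2\delta(w+\eps)^{q-1}w_s(\delta,t)$, which is bounded below by $-n^2\eps^{q-1}\delta w_s(\delta,t)$.
\end{itemize}
The drift term is $-\f\mu q\int_\delta^{R^n}s^{\f2n}\big((w+\eps)^q\big)_s\,ds$. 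After integration by parts it becomes $-\f\mu q[s^{\f2n}(w+\eps)^q]_\delta^{R^n}$ plus the nonnegative quantity $\f{2\mu}{nq}\int_\delta^{R^n} s^{\f2n-1}(w+\eps)^q\,ds$, so it is bounded below by $-C(q,m,\mu,R,n)$. Solving for the good term and integrating in time, I obtain
\[
n\int_{t_1}^{t_2}\!\!\int_\delta^{R^n} s^{-1+\f2n}(w+\eps)^{q-1}ww_s\,ds\,dt\le 2C_1+C\tau+n^2\eps^{q-1}\int_{t_1}^{t_2}\delta w_s(\delta,t)\,dt .
\]

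The main obstacle is the last term, which I remove by a well-chosen sequence $\delta_k\searrow0$ (depending on $\eps$ only through the constant in front). Since $\int_0^{a}w_s(\sigma,t)\,d\sigma\le \f m{\omega_n}$, Fubini gives
\[
\int_0^a\f1\sigma\Big(\int_{t_1}^{t_2}\sigma w_s(\sigma,t)\,dt\Big)d\sigma\le \f{m\tau}{\omega_n}<\infty .
\]
Because $\f1\sigma$ is not integrable at $0$, this forces $\liminf_{\delta\searrow0}\int_{t_1}^{t_2}\delta w_s(\delta,t)\,dt=0$. Along such $\delta_k$, monotone convergence removes the cutoff on the left-hand side.

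It then remains to let $\eps\searrow0$ by Fatou's lemma. We have $(w+\eps)^{q-1}w\to w^q$ pointwise; at points with $w=0$ both expressions vanish. Finally I let $t_1\searrow t_0$ and $t_2\nearrow t_0+\tau$. This yields \eqref{singweight} with $C$ depending only on $q,m,\mu,R,n$.

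A minor technical point is that $F_{\eps,\delta}$ is $C^1$ on $[t_1,t_2]$ and the integrations by parts are legitimate there. This holds because $w\in C^{2,1}([\delta,R^n]\times[t_1,t_2])$ and $w+\eps\ge\eps>0$.
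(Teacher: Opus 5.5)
Your proof is correct and follows essentially the same route as the paper: the same shifted functional $\frac1q\int_{\delta}^{R^n} s^{-1+\frac2n}(w+\eps)^q\,ds$ with a cutoff near $s=0$, the same integrations by parts, and the same removal of the boundary term $n^2\eps^{q-1}\delta\int w_s(\delta,t)\,dt$ along a sequence $\delta_k\searrow0$. You obtain that sequence via Tonelli, while the paper uses the logarithmic divergence of $\int \vp_s$ for $\vp=\int w\,dt$; this is the same fact. The argument ends, as in the paper, with Fatou/monotone convergence in the shift parameter. Restricting to $[t_1,t_2]\subset(t_0,t_0+\tau)$ is harmless extra care for the time regularity.
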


\begin{bew}
As a preparation, we first observe that according to our assumptions on $w$,
\begin{align}\label{magicprepphi}
\vp(s):=\int\limits_{t_0}^{t_0+\tau} w(s,t) dt, \qquad s\in(0,R^n],
\end{align}
defines a nonnegative and nondecreasing function $\vp \in C^1((0,R^n])$. Necessarily, then there must exist $(s_j)_{j \in \N} \subset (0,R^n)$ such that $s_j \searrow 0$ and
\begin{align}\label{magicprepsj}
s_j \vp_s(s_j) \to 0 \qquad \text{as } j \to \infty,
\end{align}
for otherwise we could find $s_* \in (0,R^n)$ and $c_1>0$ such that $\vp_s(s) \geq \f{c_1}s$ for all $s \in (0,s_*)$, leading to the absurd conclusion that $\vp(s) \leq \vp(s_*) - \ln \f{s_*}s$ for all $s\in(0,s_*)$ and hence $\vp(s)\to -\infty$ as $s \searrow 0$.\\
Upon this choice of $(s_j)_{j\in\N}$, for $j \in \N$ and $\delta \in (0,1)$ we now use the differential equation in (\ref{0wacc}) to obtain the identity
\begin{align}\label{magicprepdiff}
\f1q \f d{dt} \int\limits_{s_j}^{R^n} s^{-1+\f2n} (w+\delta)^q = &n^2 \int\limits_{s_j}^{R^n} s(w+\delta)^{q-1} w_{ss} + n \int\limits_{s_j}^{R^n} s^{-1+\f2n} w(w+\delta)^{q-1}w_s \notag\\
&- \mu \int\limits_{s_j}^{R^n} s^\f2n (w+\delta)^{q-1} w_s \qquad \text{for all } t\in(t_0,t_0+\tau).
\end{align}
Here, three integrations by parts show that thanks to the boundary condition $w(R^n, \cdot) \equiv \f m{\omega_n}$ as well as the nonnegativity of both $w$ and $w_s$,
\begin{eqnarray*}
n^2 \int\limits_{s_j}^{R^n} s(w+\delta)^{q-1} w_{ss} &= &n^2 (1-q) \int\limits_{s_j}^{R^n} s(w+\delta)^{q-2} w_s^2 \\
&&+ n^2 R^n \Big(\f m{\omega_n} + \delta\Big)^{q-1} w_s\Big(\f m{\omega_n},t\Big)-n^2 s_j(w(s_j,t)+\delta)^{q-1} w_s(s_j,t)\\
&& - n^2 \int\limits_{s_j}^{R^n} (w+\delta)^{q-1} w_s \\
& = &n^2 (1-q) \int\limits_{s_j}^{R^n} s(w+\delta)^{q-2} w_s^2 \\
&&+ n^2 R^n \Big(\f m{\omega_n} + \delta\Big)^{q-1} w_s\Big(\f m{\omega_n},t\Big)-n^2 s_j(w(s_j,t)+\delta)^{q-1} w_s(s_j,t)\\
&& - \f{n^2}q\Big(\f m{\omega_n} + \delta\Big)^{q} + \f{n^2}q (w(s_j,t)+ \delta)^q\\
&\geq& -n^2 s_j (w(s_j,t)+\delta)^{q-1} w_s(s_j,t) - \f{n^2}q \Big(\f m{\omega_n}+\delta \Big)^q \qquad \text{for all } t \in (t_0,t_0+\tau)
\end{eqnarray*}
and
\begin{eqnarray*}
-\mu \int\limits_{s_j}^{R^n} s^\f2n (w+\delta)^{q-1} w_s &=& \f{2\mu}{nq} \int\limits_{s_j}^{R^n} s^{-1+\f2n}(w+\delta)^q - \f\mu q R^2 \Big(\f m{\omega_n}+\delta\Big)^q + \f\mu q s_j^\f2n (w(s_j,t)+\delta)^q\\
&\geq& - \f\mu q R^2 \Big(\f m{\omega_n}+\delta\Big)^q \qquad \text{for all } t \in (t_0,t_0+\tau).
\end{eqnarray*}
Therefore, upon an integration in time, (\ref{magicprepdiff}) entails that
\begin{eqnarray}\label{magicpreptimeint}
\hspace*{-10mm} & &n \int\limits_{t_0}^{t_0+\tau} \int\limits_{s_j}^{R^n} s^{-1+\f2n} w(s,t)(w(s,t)+\delta)^{q-1} w_s(s,t) ds dt \\
&\leq& \f1q \int\limits_{s_j}^{R^n} s^{-1+\f2n} (w(s,t_0+\tau)+\delta)^q ds - \f1q \int\limits_{s_j}^{R^n} s^{-1+\f2n} (w(s,t_0)+\delta)^q ds \notag\\
&& + n^2 s_j \int_{t_0}^{t_0+\tau} (w(s_j,t)+\delta)^{q-1} w_s(s_j,t) dt + \f{n^2}q \Big(\f m{\omega_n}+\delta\Big)^q \cdot \tau + \f{\mu R^2}{q} \Big(\f m{\omega_n}+\delta\Big)^q \cdot \tau 
\end{eqnarray}
for all $j\in\N$. Here since $w\leq \f m{\omega_n}$,
\begin{align*}
\f1q \int\limits_{s_j}^{R^n} s^{-1+\f2n} (w(s,t_0+\tau)+\delta)^q ds &\leq \f1q \Big(\f m{\omega_n}+\delta\Big)^q \cdot \int\limits_0^{R^n} s^{-1+\f2n} ds\\
&= \f{nR^2}{2q}\Big(\f m{\omega_n}+\delta\Big)^q \qquad \text{for all } j \in \N,
\end{align*}
whereas combining the nonnegativity of $w$ with the fact that $q\leq 1$ yields
\begin{align*}
n^2 s_j \int\limits_{t_0}^{t_0+\tau} (w(s_j,t)+\delta)^{q-1} w_s(s_j,t) dt &\leq n^2 \delta^{q-1} s_j \cdot \int\limits_{t_0}^{t_0+\tau} w_s(s_j,t) dt\\
&= n^2 \delta^{q-1} \cdot s_j \vp_s(s_j) \qquad \text{for all } j\in\N
\end{align*}
according to our definition (\ref{magicprepphi}) of $\vp$. In light of (\ref{magicprepsj}), from (\ref{magicpreptimeint}) on neglecting  the second summand therein and taking $j\to \infty$ we thus infer that as a consequence of Fatou's lemma,
\begin{eqnarray*}
\hspace*{-10mm} & & n \int\limits_{t_0}^{t_0+\tau} \int\limits_{0}^{R^n} s^{-1+\f2n} w(s,t)(w(s,t)+\delta)^{q-1} w_s(s,t) ds dt \\
&\leq & \f{nR^2}{2q}\Big(\f m{\omega_n}+\delta\Big)^q + \f{n^2}q \Big(\f m{\omega_n}+\delta\Big)^q \cdot \tau + \f{\mu R^2}{q} \Big(\f m{\omega_n}+\delta\Big)^q \cdot \tau 
\end{eqnarray*}
for any $\delta  \in (0,1)$. Again due to Fatou's lemma, letting $\delta \searrow 0$ hence results in (\ref{singweight}).
\end{bew}

Now we can prove the previously indicated statement, which in its precise form reads as follows:

\begin{lemma}\label{magic}
Let $n\geq 3$, $R>0$, $m>0$, $\mu \geq 0$, $\gamma \in (0,1-\f2n)$, $t_0\geq 0$ and $\tau>0$. Then there exists $C(m,\gamma,\tau,R,n)>0$ with the property that any nonnegative solution $w\in C^0((0,R^n]\times[t_0,t_0+\tau]) \cap C^{2,1}((0,R^n]\times (t_0,t_0+\tau))$ of (\ref{0wacc}) with $w_s\geq 0$ in $(0,R^n)\times(t_0,t_0+\tau)$ satisfies
\begin{align}\label{w00acc}
w(0,t)=0 \qquad \text{for all } t \in (t_0,t_0+\tau),
\end{align}
then
\begin{align}\label{magicbound}
\int\limits_{t_0}^{t_0+\tau} \vertiii{w(\cdot,t)}_{\gamma} dt \leq C(m,\gamma,\tau, R, n),
\end{align}
where $\vertiii{\varphi}_\gamma \coloneq \sup\limits_{s\in(0,R^n)} \{s^{-\gamma}|\varphi(s)|\}$ for $\varphi \in C^0((0,R^n]) \cap L^\infty((0,R^n))$.
\end{lemma}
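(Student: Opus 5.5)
We derive \eqref{magicbound} from Lemma \ref{magicprep} by turning the weighted integral of $w^q w_s$ into a pointwise lower bound on $w$ via monotonicity, and using \eqref{w00acc} only to compute the $s$-integral exactly.

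\textbf{Exact integration in $s$.} For fixed $t\in(t_0,t_0+\tau)$ and $s\in(0,R^n)$, since $w_s\geq 0$ and $s^{-1+\f2n}$ is decreasing, we have
\[
\int_0^{s} \sigma^{-1+\f2n} w^q w_s(\sigma,t)\, d\sigma \;\geq\; s^{-1+\f2n}\int_0^s w^q w_s\, d\sigma \;=\; \f{s^{-1+\f2n}}{q+1}\, w^{q+1}(s,t).
\]
The last identity is exactly where \eqref{w00acc} enters: the lower endpoint contributes $w(0,t)^{q+1}=0$. Taking the supremum over $s$ gives
\[
\sup_{s\in(0,R^n)} s^{-1+\f2n}\, w^{q+1}(s,t) \leq (q+1)\, I(t), \qquad I(t):=\int_0^{R^n}\sigma^{-1+\f2n}w^qw_s\,d\sigma .
\]
Hence $s^{-\frac{1-2/n}{q+1}}\, w(s,t)\leq \big((q+1)I(t)\big)^{\f1{q+1}}$ for all $s$.

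\textbf{Choice of $q$ and time integration.} Given $\gamma\in(0,1-\f2n)$, choose $q\in(0,1)$ with $\f{1-2/n}{q+1}=\gamma$, that is $q=\f{1-2/n}{\gamma}-1$. This $q$ lies in $(0,1)$ provided $\gamma>\f{1-2/n}{2}$. The bound above then reads
\[
\vertiii{w(\cdot,t)}_\gamma \leq \big((q+1)I(t)\big)^{\f1{q+1}}.
\]
Since $\f1{q+1}<1$, Hölder's (Jensen's) inequality in time gives
\[
\int_{t_0}^{t_0+\tau}\vertiii{w(\cdot,t)}_\gamma\,dt \leq \tau^{\f q{q+1}}\Big((q+1)\int_{t_0}^{t_0+\tau} I\,dt\Big)^{\f1{q+1}}.
\]
By Lemma \ref{magicprep} (with $\mu=\f{mn}{\omega_nR^n}$ fixed by $m,R,n$), $\int I\,dt \leq C(q)(1+\tau)$, so the right-hand side is bounded by a constant depending only on $m,\gamma,\tau,R,n$.

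\textbf{Small $\gamma$.} For $\gamma\leq\f{1-2/n}{2}$, fix any admissible $\gamma'\in(\f{1-2/n}2,1-\f2n)$. Because $s\leq R^n$, we have $\vertiii{\cdot}_\gamma\leq R^{n(\gamma'-\gamma)}\vertiii{\cdot}_{\gamma'}$, so this case reduces to the previous one.

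The main obstacle is the first step. The pointwise bound is sharp only because \eqref{w00acc} eliminates the boundary term at $s=0$; if $w(0,t)>0$ that term would be uncontrolled. A minor technical point is that $w$ is only continuous on $(0,R^n]$. Writing $\int_{s'}^{s}$ and letting $s'\searrow 0$ handles this, with the limit $w(s',t)\to 0$ justified by \eqref{w00acc} and monotonicity. Measurability of $t\mapsto\vertiii{w(\cdot,t)}_\gamma$ follows because it is a supremum over rational $s$ of continuous functions of $t$.
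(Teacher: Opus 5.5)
Your argument is correct, but it is organized differently from the paper's, and the difference matters. The paper fixes $s$ and integrates in time first. Using the regularized weight $(s+\eta)^{-1+\f2n}$ and \eqref{w00acc}, it obtains $\int_{t_0}^{t_0+\tau}s^{-1+\f2n}w^{q+1}(s,t)\,dt\le K$ for each $s$. It then splits $[t_0,t_0+\tau]$ into the times where $w(s,t)\le s^\gamma$ and those where $w(s,t)>s^\gamma$, which gives $\int_{t_0}^{t_0+\tau}s^{-\gamma}w(s,t)\,dt\le\tau+K$ uniformly in $s$.

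That is a bound on $\sup_s\int_{t_0}^{t_0+\tau}s^{-\gamma}w\,dt$. In \eqref{magicbound}, however, the supremum sits inside the integral. So the paper's closing remark that uniformity in $s$ ``readily results in'' \eqref{magicbound} interchanges supremum and integral in the wrong direction.

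You instead fix $t$ first. You use that $\sigma\mapsto\sigma^{-1+\f2n}$ is decreasing, which replaces the $\eta$-regularization, to get $\vertiii{w(\cdot,t)}_\gamma\le\big((q+1)I(t)\big)^{\f1{q+1}}$. Only then do you integrate in time via H\"older, and this gives \eqref{magicbound} exactly as stated. H\"older is not even needed: the paper's level-set splitting applied at fixed $t$ gives $s^{-\gamma}w\le\max\{1,s^{-1+\f2n}w^{q+1}\}\le 1+(q+1)I(t)$.

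The paper's weaker, $s$-uniform form does suffice for its only application, Lemma \ref{estwg0acc}, because the maximizer of $\Phi$ used there does not depend on $t$. Still, your version is the one that proves the lemma as formulated.

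As in the paper, your constant also depends on $\mu$ through Lemma \ref{magicprep}. This is harmless once $\mu=\f{mn}{\omega_nR^n}$.
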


\begin{bew}
By applying Lemma \ref{magicprep}, for all $q\in(0,1)$ we obtain the existence of a constant $C=C(q,m,\mu,R,n)>0$ such that
\begin{align*}
\int\limits_{t_0}^{t_0+\tau} \int\limits_0^{R^n} s^{-1+\f2n} w^q(s,t)w_s(s,t) ds dt \leq C(1+\tau).
\end{align*}
Together with (\ref{w00acc}), for any $\eta>0$ and $s\in(0,R^n)$, we have
\begin{align*}
\int\limits_{t_0}^{t_0+\tau} (s+\eta)^{-1+\f2n}  w^{q+1}(s,t) dt &= \eta^{-1+\f2n} \int\limits_{t_0}^{t_0+\tau} w^{q+1}(0,t) dt +\int\limits_{t_0}^{t_0+\tau}\int\limits_0^s \partial_\sigma \{(\sigma+\eta)^{-1+\f2n} w^{q+1}(\sigma,t) \}d\sigma dt\\
&\leq (q+1) \int\limits_{t_0}^{t_0+\tau}\int\limits_0^s (\sigma + \eta)^{-1+\f2n} w^q w_s d\sigma dt\\
&\leq (q+1) C(1+\tau),
\end{align*}
which, upon letting $\eta \searrow 0$, results in
\[
\int\limits_{t_0}^{t_0+\tau} s^{-1+\f2n} w^{q+1}(s,t) dt \leq K,
\]
where $K=K(q,R,m,\mu,n,\tau):= (q+1) C(1+\tau)$.\\
In order to appropriately utilize this inequality to acquire (\ref{magicbound}), we note that for any $\gamma \in (0,1-\f2n)$, the inequality $w > s^{\gamma}$ is equivalent to $w^{\f{1-\f2n-\gamma}{\gamma}}>s^{1-\f2n-\gamma}$, and thus for $\gamma>\frac{n-2}{2n}$, with $q=q(\gamma,n):=\f{1-\f2n-\gamma}{\gamma}<1$, we infer
\begin{align*}
\int\limits_{t_0}^{t_0+\tau} s^{-\gamma}w(s,t) dt &= \int\limits_{\substack{t_0\\\{w\leq s^\gamma\}}}^{t_0+\tau} s^{-\gamma}w(s,t) dt + \int\limits_{\substack{t_0\\\{w> s^\gamma\}}}^{t_0+\tau} s^{-\gamma}w(s,t) dt\\
&\leq \int\limits_{\substack{t_0\\\{w\leq s^\gamma\}}}^{t_0+\tau} 1 dt + \int\limits_{\substack{t_0\\\{w> s^\gamma\}}}^{t_0+\tau} s^{-\gamma}w(s,t) s^{-1+\f2n+\gamma} w^{\f{1-\f2n-\gamma}{\gamma}} dt\\
&\leq \tau +  \int\limits_{\substack{t_0\\\{w> s^\gamma\}}}^{t_0+\tau} s^{-1+\f2n}w^{q+1}(s,t)  dt\\
&\leq \tau +  \int\limits_{t_0}^{t_0+\tau} s^{-1+\f2n}w^{q+1}(s,t)  dt\\
&\leq \tau + K.
\end{align*}
For $\gamma\in(0,\f{n-2}{2n}]$, we may trivially reach the same conclusion by estimating 
\[
s^{-\gamma} \leq C \cdot s^{-\alpha}
\]
for some $\alpha \in (\f{n-2}{2n},1-\f2n)$ and $C=C(R,n)$. Since $K$ is independent of $s$, this readily results in (\ref{magicbound}).
\end{bew}

Consequently, in order to violate \eqref{magicbound}, we seek subsolutions which are sufficiently close to $cs^{-\gamma}$ for some $c>0$ and $\gamma \in(0,1-\f2n)$. One such family is given by

\begin{lemma}\label{compfctmassacc}
Let $n\geq 3$, $R>0$, $\gamma \in(0,1-\f2n)$, $m>0$, $\mu\geq 0$ and $y_0>0$, and let
\begin{align}\label{betadefacc}
\beta \coloneq 1 - \f2{n-2-n\gamma}
\end{align}
Then there exists $C>0$ such that for all $\delta>0$, writing $T \coloneq \f{y_0^{1-\beta}}{(1-\beta)C} \in (0,\infty]$, we have the following:\\
The function $\uw_\delta$ defined on $[0,R^n]\times[0,T)$ by letting
\begin{align*}
\uw_\delta(s,t):= y(t) \cdot \varphi_\delta(s), \qquad s\in[0,R^n],~t\in [0,T),
\end{align*}
where
\begin{align*}
\varphi_\delta(s) \coloneq \f{s^2}{s^{2-\gamma}+\delta}, \qquad s\in[0,R^n],
\end{align*}
and $y\in C^2([0,T))$ is denoting the solution of
\begin{align}\label{ydefacc}
\left\lbrace
\begin{array}{r@{}l@{\quad}l@{\quad}l@{\,}c}
&y'(t) = -C y^\beta(t), \quad t\in[0,T),\\
&y(0)=y_0,
\end{array}\right.
\end{align}
that is,
\begin{align}\label{yexplacc}
y(t)= \{y_0^{1-\beta}-(1-\beta)Ct\}^{\f1{1-\beta}}, \qquad t\in[0,T),
\end{align}
belongs to $C^2([0,R^n]\times[0,T))$ and satisfies
\begin{align}\label{uwdiffineqacc}
\uw_{\delta t} \leq n^2 s^{2-\f2n}\uw_{\delta ss} + n \uw_\delta \uw_{\delta s} - \mu s \uw_{\delta s} \qquad \text{in}~[0,R^n]\times(0,T).
\end{align}
\end{lemma}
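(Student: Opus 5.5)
The plan is to substitute $\uw_\delta=y\varphi_\delta$ into \eqref{uwdiffineqacc}, divide by $y$, and reduce the claim to a pointwise inequality in $s$ that must hold for every value $y\in(0,y_0]$. Since $y'=-Cy^\beta$, the inequality \eqref{uwdiffineqacc} is equivalent to
\begin{align*}
0\leq n^2 s^{2-\f2n}\varphi_\delta'' -\mu s\varphi_\delta' + n y\,\varphi_\delta\varphi_\delta' + C y^{\beta-1}\varphi_\delta \qquad \text{in }(0,R^n),
\end{align*}
to be verified for all $y=y(t)\in(0,y_0]$. The function $y$ is decreasing, so $y\leq y_0$ on $[0,T)$; note that $T$ may be infinite when $\beta\le 0$, in which case $y$ is still decreasing and positive. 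The regularity $\uw_\delta\in C^2([0,R^n]\times[0,T))$ is immediate: $\varphi_\delta$ is smooth on $[0,R^n]$ for $\delta>0$, since $\varphi_\delta(s)=s^2/(s^{2-\gamma}+\delta)$ with $s^{2-\gamma}\in C^1$ and the quotient behaves like $s^2/\delta$ near $0$. More care is needed at $s=0$, where $s^{2-\gamma}$ is only $C^{1,1-\gamma}$; this is checked directly.

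First I would compute
\begin{align*}
\varphi_\delta'=\f{s(\gamma s^{2-\gamma}+2\delta)}{(s^{2-\gamma}+\delta)^2}>0.
\end{align*}
Positivity of $\varphi_\delta'$ makes the quadratic term $ny\varphi_\delta\varphi_\delta'$ a helpful, nonnegative contribution. Two terms can be negative: the drift term $-\mu s\varphi_\delta'$ and the diffusion term in the region where $\varphi_\delta''<0$, which is where $s^{2-\gamma}\gtrsim\delta$. For the drift, I would show $s\varphi_\delta'\leq 2\varphi_\delta$, which follows directly from the formula above. With $y\le y_0$ and $\beta<1$ we have $y^{\beta-1}\geq y_0^{\beta-1}$, so the drift is absorbed by $Cy^{\beta-1}\varphi_\delta$ once $C\geq 2\mu y_0^{1-\beta}$.

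For the diffusion term I would use the scaling $\varphi_\delta(s)=\delta^{\f\gamma{2-\gamma}}\Phi(x)$ with $x=s\delta^{-\f1{2-\gamma}}$ and $\Phi(x)=\f{x^2}{x^{2-\gamma}+1}$. Writing $p:=1-\beta=\f2{n-2-n\gamma}>0$, Young's inequality gives
\begin{align*}
n y\varphi_\delta' + Cy^{-p}\varphi_\delta \geq c_p\,(n\varphi_\delta')^{\f p{1+p}}(C\varphi_\delta)^{\f1{1+p}} .
\end{align*}
It then suffices to show that $n^2 s^{2-\f2n}(\varphi_\delta'')_-$ is bounded by this expression times $\varphi_\delta$. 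The $\delta$-powers match exactly when $p=\f2{n-2}$. Our $p$ is larger, so I would first trade $y^{-p}\geq y_0^{p'-p}y^{-p'}$ with $p'=\f2{n-2}$ before applying Young. After the scaling, the required inequality becomes $\delta$-independent:
\begin{align*}
x^{2-\f2n}\Phi''(x)_- \leq c\,C^{\f1{1+p'}}\,\Phi'(x)^{\f{p'}{1+p'}}\Phi(x)^{\f{1+p'}{1+p'}} \qquad \text{for all } x>0 .
\end{align*}
Actually one factor $\Phi$ is already pulled out, so this should be read as a comparison of $x^{2-\f2n}\Phi''_-$ with $\Phi'^{\f{p'}{1+p'}}\Phi^{\f1{1+p'}}\cdot\Phi$.

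This inequality I would check in three regimes. Near $x=0$ we have $\Phi''>0$, since $\Phi\sim x^2$, so there is nothing to prove. On compact $x$-sets all quantities are positive and continuous, so a large $C$ suffices. As $x\to\infty$, we have $\Phi\sim x^\gamma$, $\Phi'\sim\gamma x^{\gamma-1}$ and $x^{2-\f2n}\Phi''\sim\gamma(\gamma-1)x^{\gamma-\f2n}$. With $p'=\f2{n-2}$ the exponents on both sides coincide, and there is a positive constant ratio, so the inequality holds for $C$ large. Choosing $C$ as the maximum of the resulting constants yields \eqref{uwdiffineqacc}, with $C$ independent of $\delta$ as required.

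The main obstacle is this asymptotic matching. One must verify that the exponent exchange between $y^{-p}$ and $y^{-p'}$ is legitimate only because $y\le y_0$. One must also make sure that the unscaled case, in which $s$ is restricted to $(0,R^n]$ so that $x\le R^n\delta^{-\f1{2-\gamma}}$, does not introduce any hidden $\delta$-dependence. If the matching with $p'$ turned out to be off, I would instead redo the Young split using the exact $p$ and compare the powers of $x$ at infinity. That variant only requires $p\ge\f2{n-2}$, which holds precisely because $\gamma>0$.
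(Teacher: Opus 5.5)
The gap is in the exponent bookkeeping, and that bookkeeping is the whole content of this lemma.

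Your reduction itself is fine. After dividing by $y$ you must show $n^2s^{2-\frac2n}\varphi_\delta''-\mu s\varphi_\delta'+ny\varphi_\delta\varphi_\delta'+Cy^{-p}\varphi_\delta\ge0$ for all $y\in(0,y_0]$. The drift is handled via $s\varphi_\delta'\le2\varphi_\delta$. The negative part of the diffusion has to be absorbed by $\varphi_\delta\,(ny\varphi_\delta'+C_2y^{-p})$, minimized over $y$. Note that this product is what is available, not $ny\varphi_\delta'+Cy^{-p}\varphi_\delta$, and that $C$ must be split between drift and diffusion.

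Now take $\lambda:=\delta^{1/(2-\gamma)}$, $\varphi_\delta(s)=\lambda^\gamma\Phi(s/\lambda)$ and $q:=\frac p{1+p}$. The term $n^2s^{2-\frac2n}(\varphi_\delta'')_-$ carries the factor $\lambda^{\gamma-\frac2n}$, while $\varphi_\delta(\varphi_\delta')^q$ carries $\lambda^{\gamma-(1-\gamma)q}$. These agree iff $(1-\gamma)q=\frac2n$, i.e.\ iff $p=\frac2{n-2-n\gamma}=1-\beta$. So the given exponent is already the matching one, which is exactly why $\beta$ is defined by \eqref{betadefacc}. The exponent $p'=\frac2{n-2}$ would match only for $\gamma=0$.

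Your trade $y^{-p}\ge y_0^{p'-p}y^{-p'}$ therefore destroys uniformity in $\delta$. The scaled inequality you would actually need is
\[
x^{2-\frac2n}\Phi''(x)_-\le c\,C_2^{\frac{n-2}n}\,\delta^{\frac{2\gamma}{n(2-\gamma)}}\,\Phi(x)\Phi'(x)^{\frac2n}.
\]
Fix any $x$ with $\Phi''(x)<0$. There the minimizing $y$ tends to $0$ as $\delta\searrow0$, so it is attained by $y(t)$, and the inequality forces $C_2\gtrsim\delta^{-\frac{2\gamma}{(n-2)(2-\gamma)}}$.

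Your claim that the powers of $x$ at infinity coincide for $p'$ is also false. The right-hand side grows faster by $x^{2\gamma/n}$; combined with the dropped factor this is $s^{2\gamma/n}$, which is small for small $s$. Likewise, ``$p\ge\frac2{n-2}$'' is the wrong criterion for your fallback, since it would equally endorse the failing $p'$ route. The correct criterion is $p\ge\frac2{n-2-n\gamma}$, which holds here with equality.

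Your fallback with the exact $p$ does close the argument, and it needs no scaling. From the explicit formulas, $(\varphi_\delta'')_-\le\frac{3(1-\gamma)s^{1-\gamma}}{s^{2-\gamma}+\delta}\varphi_\delta'$ and $\varphi_\delta'\le2s^{\gamma-1}$. Using $q<1$ and $(1-\gamma)q=\frac2n$, this gives
\[
n^2s^{2-\frac2n}(\varphi_\delta'')_-\le3n^2(1-\gamma)\,s^{1-\frac2n-\gamma}\varphi_\delta\varphi_\delta'\le3n^2(1-\gamma)\,2^{1-q}\,\varphi_\delta(\varphi_\delta')^q .
\]
On the other side, $\inf_{y>0}(ny\varphi_\delta'+C_2y^{-p})=c_p\,(n\varphi_\delta')^q\,C_2^{1-q}$. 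Hence $C=2\mu y_0^{1-\beta}+C_2$, with $C_2=C_2(n,\gamma)$ large, works for every $\delta>0$.

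This is a genuinely different but equivalent way of striking the balance that the paper strikes by a case split. The paper absorbs the diffusion into $ny^2\varphi_\delta\varphi_\delta'$ for $s\le s_0(t)\sim y(t)^{1/(1-\frac2n-\gamma)}$, and into $\frac12y'\varphi_\delta$ for $s>s_0(t)$. Its identity $s_0^{-2/n}y^{1-\beta}=\mathrm{const}$ is precisely $(1-\gamma)q=\frac2n$. Young's inequality in $y$ simply replaces the time-dependent split point.

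Minor: $T$ is always finite, since $1-\beta>0$.
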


\begin{bew}
For all $s\in(0,R^n)$, we firstly compute
\begin{align*}
\vp_{\delta s}(s) = \f{2s(s^{2-\gamma}+\delta)-s^2(2-\gamma)s^{1-\gamma}}{(s^{2-\gamma}+\delta)^2}=\f{\gamma s^{3-\gamma}+2\delta s}{(s^{2-\gamma}+\delta)^2}
\end{align*}
and
\begin{align*}
\vp_{\delta ss} (s) &= \f{(\gamma(3-\gamma)s^{2-\gamma}+2\delta)(s^{2-\gamma}+\delta)^2 - (\gamma s^{3-\gamma}+2\delta s)\cdot 2(s^{2-\gamma}+\delta)(2-\gamma)s^{1-\gamma}}{(s^{2-\gamma}+\delta)^4}\\
&= \f{\gamma(3-\gamma)s^{4-2\gamma}+\gamma(3-\gamma)\delta s^{2-\gamma}+2\delta s^{2-\gamma} + 2\delta^2 - 2 \gamma (2-\gamma)s^{4-2\gamma} - 4(2-\gamma)\delta s^{2-\gamma}}{(s^{2-\gamma}+\delta)^3}\\
&= \f{\{\gamma(3-\gamma)-2\gamma(2-\gamma)\}s^{4-2\gamma}+\{\gamma(3-\gamma)+2-4(2-\gamma)\}\delta s^{2-\gamma} +2\delta^2 }{(s^{2-\gamma}+\delta)^3}\\
&= \f{-\gamma(1-\gamma)s^{4-2\gamma}-(1-\gamma)(6-\gamma)\delta s^{2-\gamma} + 2\delta^2 }{(s^{2-\gamma}+\delta)^3},\\
\end{align*}
by which, via continuous extension, $\vp \in C^2([0,R^n])$, because $\gamma<2$. 
Picking 
\begin{equation}\label{Cdefacc}
C:=\max \Big\{4\mu y_0^{1-\beta}, 6n^2(1-\gamma) (3n(1-\gamma))^{\f{2}{n-2-n\gamma}}\Big\},
\end{equation}
we observe that then
\[
4\mu y \leq Cy^\beta
\]
in $(0,T)$ due to (\ref{yexplacc}) and hence, by (\ref{ydefacc}),
\begin{align}\label{mutermacc}
\f12 y' \vp_\delta +  \mu s \uw_{\delta s} &= \f12 y' \f{s^2}{s^{2-\gamma}+\delta} +  \mu s y \f{\gamma s^{3-\gamma}+2\delta s}{(s^{2-\gamma}+\delta)^2} \notag\\
&=\f{\f12 y's^2(s^{2-\gamma}+\delta)+ \mu s y(\gamma s^{3-\gamma}+2\delta s)}{(s^{2-\gamma}+\delta)^2} \notag\\
&=\f{s^2(-\f12 Cy^\beta s^{2-\gamma}-\f12 Cy^\beta \delta+ \mu  y\gamma s^{2-\gamma}+2\delta \mu y )}{(s^{2-\gamma}+\delta)^2} \notag\\
&\leq \f{s^2(-2\mu y s^{2-\gamma}-2\mu y \delta+ \mu  y\gamma s^{2-\gamma}+2\delta \mu y )}{(s^{2-\gamma}+\delta)^2}\notag\\
&= \f{-\mu y s^2 (2-\gamma) s^{2-\gamma}}{(s^{2-\gamma}+\delta)^2} \notag\\
&\leq 0
\end{align}
for all $(s,t)\in[0,R^n]\times(0,T)$.\\
Now fixing any $t>0$ and thereupon defining
\begin{equation}\label{s0defacc}
s_0:=\Big(\f1{3n(1-\gamma)}\Big)^\f1{1-\f2n-\gamma} \cdot y^\f1{1-\f2n-\gamma}(t),
\end{equation}
we may establish that for $s \leq s_0$
\begin{align*}
\f{-nys^{2-\f2n}\vp_{\delta ss}}{y^2 \vp_\delta \vp_{\delta s}} &= \f{-ns^{2-\f2n}(-\gamma(1-\gamma)s^{4-2\gamma}-(1-\gamma)(6-\gamma)\delta s^{2-\gamma} + 2\delta^2)}{ys^2(\gamma s^{3-\gamma}+2\delta s)}\\
&\leq \f{ns^{2-\f2n}(\gamma(1-\gamma)s^{4-2\gamma}+(1-\gamma)(6-\gamma)\delta s^{2-\gamma})}{ys^2(\gamma s^{3-\gamma}+2\delta s)}\\
&=\f{ns^{1-\f2n-\gamma}}{y}\cdot \f{\gamma(1-\gamma)s^{5-\gamma}+(1-\gamma)(6-\gamma)\delta s^{3}} {\gamma s^{5-\gamma}+2\delta s^3}\\
&= \f{ns^{1-\f2n-\gamma}(1-\gamma)}{y}\cdot \f{\gamma s^{2-\gamma}+(6-\gamma)\delta} {\gamma s^{2-\gamma}+2\delta}\\
&\leq \f{ns^{1-\f2n-\gamma}(1-\gamma)}{y}\cdot \f{3\gamma s^{2-\gamma}+6\delta} {\gamma s^{2-\gamma}+2\delta}\\
&= \f{ns^{1-\f2n-\gamma}(1-\gamma)}{y}\cdot 3\\
&\leq \f{3n(1-\gamma)}{y}\cdot s_0^{1-\f2n-\gamma}\\
&= 1,
\end{align*}
and consequently, together with (\ref{mutermacc}),
\begin{align}\label{smallsineqacc}
\uw_{\delta t} - n^2 s^{2-\f2n}\uw_{\delta ss} - n \uw_\delta \uw_{\delta s} + \mu s \uw_{\delta s} &=y' \vp_\delta - n^2 s^{2-\f2n}\uw_{\delta ss} - n \uw_\delta \uw_{\delta s} + \mu s \uw_{\delta s} \notag\\
&\leq - n^2 s^{2-\f2n}\uw_{\delta ss} - n \uw_\delta \uw_{\delta s} \notag\\
&= n(-nys^{2-\f2n}\vp_{\delta ss}-y^2 \vp_\delta \vp_{\delta s}) \notag\\
&\leq 0.
\end{align}
If on the other hand $s>s_0$, we estimate
\begin{align*}
\f{n^2 s^{2-\f2n}\uw_{\delta ss}}{\f12 y' \vp_\delta} &=2n^2 s^{2-\f2n}y^{1-\beta}\f{\gamma(1-\gamma)s^{4-2\gamma}+(1-\gamma)(6-\gamma)\delta s^{2-\gamma} - 2\delta^2 }{Cs^2(s^{2-\gamma}+\delta)^2}\\
&\leq 2n^2 s^{2-\f2n}y^{1-\beta}\f{\gamma(1-\gamma)s^{4-2\gamma}+(1-\gamma)(6-\gamma)\delta s^{2-\gamma}}{Cs^2(s^{2-\gamma}+\delta)^2}\\
&= \f{2n^2(1-\gamma)}C s^{-\f2n}y^{1-\beta}\f{\gamma s^{4-2\gamma}+(6-\gamma)\delta s^{2-\gamma}}{s^{4-2\gamma}+2\delta s^{2-\gamma}+\delta^2}\\
&\leq \f{2n^2(1-\gamma)}C s^{-\f2n}y^{1-\beta}\f{3 s^{4-2\gamma}+6\delta s^{2-\gamma}}{s^{4-2\gamma}+2\delta s^{2-\gamma}}\\
&= \f{6n^2(1-\gamma)}C s^{-\f2n}y^{1-\beta}\\
&= \f{6n^2(1-\gamma)}C s_0^{-\f2n}y^{1-\beta}\\
&= \f{6n^2(1-\gamma)}C \Big(\f1{3n(1-\gamma)}\Big)^{-\f{2}{n-2-n\gamma}} \cdot y^{-\f{2}{n-2-n\gamma}+1-\beta}\\
&= \f{6n^2(1-\gamma)}C \Big(\f1{3n(1-\gamma)}\Big)^{-\f{2}{n-2-n\gamma}}\\
&\leq 1
\end{align*}
in for all $s \in [0,R^n]$ and $t\in(0,T)$ by means of (\ref{s0defacc}), (\ref{betadefacc}) and (\ref{Cdefacc}). In view of the nonpositivity of $y'$, this implies that $\f12 y' \vp_\delta \leq n^2 s^{2-\f2n}\uw_{\delta ss}$, and accordingly, once more relying on (\ref{mutermacc}),
\begin{align}\label{largesineqacc}
\uw_{\delta t} - n^2 s^{2-\f2n}\uw_{\delta ss} - n \uw_\delta \uw_{\delta s} + \mu s \uw_{\delta s} &=y' \vp_\delta - n^2 s^{2-\f2n}\uw_{\delta ss} - n \uw_\delta \uw_{\delta s} + \mu s \uw_{\delta s} \notag\\
&\leq \f12 y' \vp_\delta - n^2 s^{2-\f2n}\uw_{\delta ss}  \notag\\
&\leq 0
\end{align}
in $[0,R^n]\times(0,T)$. We may now simply combine (\ref{smallsineqacc}) and (\ref{largesineqacc}) to arrive at (\ref{uwdiffineqacc}).
\end{bew}

By choosing $\delta>0$ sufficiently small, we obtain a contradiction to \eqref{magicbound} for $\gamma_0\in(\gamma,1-\f2n)$ and thereby establish that Dirac formation can not only occur but happen arbitrarily quickly.

\begin{lemma}\label{estwg0acc}
Let $n\geq 3$, $R>0$, $m>0$, $\gamma \in (0,1-\f2n)$, $\tau>0$ and $y_0>0$. Then one can find $\delta_* = \delta_*(n,m,\gamma,\tau,y_0) \in (0,1)$ with the property that whenever 
$w_0$ satisfies \eqref{initacc} as well as
\begin{align}\label{initialconditiondiracmassw}
w_0(s) \geq y_0\cdot \frac{s^2}{s^{2-\gamma}+\delta_*} \qquad \text{for all }s\in[0,R^n],
\end{align}
any nonnegative solution $w\in C^0((0,R^n]\times [0,\infty)) \cap C^{2,1}((0,R^n]\times(0,\infty))$ of \eqref{0wacc} with $w_s \geq 0$ in $(0,R^n]\times(0,\tau)$ has the property that
\begin{align}\label{diracmassw}
w(0,t_0)>0 \qquad \text{for some}~t_0\in(0,\tau).
\end{align}
\end{lemma}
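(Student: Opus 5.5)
We argue by contradiction. Suppose $w(0,t)=0$ for all $t\in(0,\tau)$. Lemma \ref{magic}, applied with $t_0=0$ and some $\gamma_0\in(\gamma,1-\f2n)$, then yields a constant $C_1=C(m,\gamma_0,\tau,R,n)$, independent of $w_0$ and hence of $\delta_*$, such that
\begin{align*}
\int_0^\tau \vertiii{w(\cdot,t)}_{\gamma_0}\,dt \leq C_1 .
\end{align*}
Our goal is to show that a lower barrier built from Lemma \ref{compfctmassacc} makes this integral arbitrarily large once $\delta$ is small.

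The first step is to fix the constant $C$ from Lemma \ref{compfctmassacc} associated with $\gamma,y_0,m,\mu$. We then choose $\tau_1\in(0,\tau)$ so small that $\tau_1<T=\f{y_0^{1-\beta}}{(1-\beta)C}$ and $y(t)\geq y_0/2$ on $[0,\tau_1]$; note that $\beta<1$ by \eqref{betadefacc}. Next we compare $\uw_\delta=y\varphi_\delta$ with $w$ on $[0,\tau_1]$ via Lemma \ref{halfcomparison}, using $f(\xi)=\xi$ (truncated outside the bounded range of $\uw_{\delta s}$ so that $f\in W^{1,\infty}_{loc}$) and $a=0$. The hypotheses check out as follows.
\begin{itemize}
\item $\uw_\delta\in C^2([0,R^n]\times[0,\tau_1])$, so its spatial derivative is bounded.
\item $\uw_\delta(0,t)=0\leq a$, and $w\geq0$.
\item At $s=R^n$ we have $\uw_\delta\leq y_0 R^{n\gamma}$, which is at most $\f m{\omega_n}$ provided $y_0$ is small. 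If $y_0$ is not small, we can replace $y_0$ by $\min\{y_0,\f m{\omega_n}R^{-n\gamma}\}$, since the hypothesis \eqref{initialconditiondiracmassw} is monotone in $y_0$.
\item Initially, $\uw_\delta(\cdot,0)\leq w_0$ by \eqref{initialconditiondiracmassw}, with $\delta=\delta_*$.
\end{itemize}
Comparison then gives $w\geq \uw_{\delta_*}$ on $(0,R^n]\times[0,\tau_1]$.

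With this bound in hand, for every $t\in(0,\tau_1)$ we estimate
\begin{align*}
\vertiii{w(\cdot,t)}_{\gamma_0}\geq \sup_{s} s^{-\gamma_0}\f{y_0}{2}\f{s^2}{s^{2-\gamma}+\delta_*}\geq \f{y_0}{4}\,\delta_*^{-\f{\gamma_0-\gamma}{2-\gamma}},
\end{align*}
by evaluating at $s=\delta_*^{1/(2-\gamma)}$, which lies in $(0,R^n)$ for small $\delta_*$. Integrating over $(0,\tau_1)$ gives
\begin{align*}
\int_0^\tau \vertiii{w}_{\gamma_0}\,dt\geq \f{\tau_1 y_0}{4}\,\delta_*^{-\f{\gamma_0-\gamma}{2-\gamma}},
\end{align*}
and the right-hand side tends to $\infty$ as $\delta_*\searrow0$. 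Choosing $\delta_*$ so small that it exceeds $C_1$ contradicts the bound from Lemma \ref{magic}.

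The only delicate point is that $\delta_*$ must depend only on the listed parameters. This holds because $C_1$, $\tau_1$ and $C$ do not depend on $w$ or $\delta_*$; the constant $\mu=\f{mn}{\omega_nR^n}$ enters through $m$ and $R$. One should also check that the hypotheses of Lemma \ref{magic} hold on $[0,\tau]$: the solution is continuous up to $t=0$ by assumption, and $w_s\geq0$ holds there. Everything else is direct.
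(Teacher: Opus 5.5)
Your argument is correct and is essentially the paper's proof: contradiction via Lemma~\ref{magic} with some $\gamma_0\in(\gamma,1-\frac2n)$, the barrier $y\varphi_\delta$ from Lemma~\ref{compfctmassacc} inserted through Lemma~\ref{halfcomparison}, and a lower bound for $\vertiii{w(\cdot,t)}_{\gamma_0}$ of order $\delta^{-\frac{\gamma_0-\gamma}{2-\gamma}}$; evaluating at $s=\delta_*^{1/(2-\gamma)}$ instead of the exact maximizer only changes the constant. Your adjustment of $y_0$ for the condition at $s=R^n$ is unnecessary, because $y\le y_0$ and the hypothesis at $s=R^n$ already give $y(t)\varphi_{\delta_*}(R^n)\le w_0(R^n)=\frac m{\omega_n}$; likewise $f(\xi)=\xi$ already lies in $W^{1,\infty}_{loc}(\R)$ and needs no truncation.
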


\begin{bew}
We first set $\beta$ as in (\ref{betadefacc}), $T \coloneq \f{y_0^{1-\beta}}{(1-\beta)C}$, and let $y \in C^2([0,T))$ denote the solution of
\begin{align*}
\left\lbrace
\begin{array}{r@{}l@{\quad}l@{\quad}l@{\,}c}
&y'(t) = -C y^\beta(t), \quad t\in[0,T),\\
&y(0)=y_0.
\end{array}\right.
\end{align*}
Thereupon, we fix $\tau\in(0,T)$ and $c_1 \in(0,y_0)$ such that $y(t)\geq c_1$ for all $t\in[0,\tau]$.\\
Let $w \in C^0((0,R^n]\times [0,\infty)) \cap C^{2,1}((0,R^n]\times(0,\infty))$ be a nonnegative solution to (\ref{0wacc}) with $w_0$ satisfying \eqref{w0acc}.
If we now suppose that furthermore
\begin{align}\label{zerohypothesisw}
w(0,t)=0 \qquad \text{for all }t\in[0,\tau],
\end{align}
then after fixing any $\gamma_0 \in (\gamma,1-\f2n)$, Lemma \ref{magic} becomes applicable so as to warrant that
\begin{align}\label{wnormintest}
\int\limits_{0}^{\tau} \vertiii{w(\cdot,t)}_{\gamma_0} dt \leq c_2
\end{align}
for some $c_2 = c_2(m,\gamma_0,\tau, R, n)$ but, importantly, independent of the particular choice of $w_0$.\\
Defining
\begin{align}\label{c3defgammabuacc}
c_3:=c_1 (\gamma_0-\gamma)^{-\frac{\gamma_0-\gamma}{2-\gamma}}\cdot \frac{(2-\gamma_0)^\frac{2-\gamma_0}{2-\gamma}}{2-\gamma},
\end{align}
we pick $\delta>0$ such that
\begin{align}\label{deltachoice34acc}
 \delta < \min\bigg\{\frac{\gamma_0-\gamma}{2-\gamma_0}R^{n(2-\gamma)}, \Big(\f{c_3 \tau}{c_2}\Big)^{\frac{2-\gamma}{\gamma_0-\gamma}}\bigg\},
\end{align}
and assume that furthermore, $w_0$ satisfies the inequality
\[
w_{0}(s) \geq y_0\cdot \frac{s^2}{s^{2-\gamma}+\delta} \qquad \text{for all }s\in[0,R^n].
\]
Since moreover, $w_{0}(0)=0$ and $w_{0s}$ is bounded, we may draw upon Lemma \ref{compfctmassacc} and our comparison principle \ref{halfcomparison} to establish that $w$ satisfies the inequality
\begin{equation*}
w \geq y \cdot \frac{s^2}{s^{2-\gamma}+\delta} \geq c_1 \cdot \frac{s^2}{s^{2-\gamma}+\delta} \qquad \text{in }(0,R^n]\times[0,\tau].
\end{equation*}
Consequently, we may infer that
\begin{align}\label{vertiiiest1}
\vertiii{w(\cdot,t)}_{\gamma_0} &= \sup_{s\in(0,R^n)} \{s^{-\gamma_0} w_\delta(s,t) \}\notag\\
&\geq c_1 \sup_{s\in(0,R^n)} \Big\{s^{-\gamma_0} \frac{s^2}{s^{2-\gamma}+\delta} \Big\}\notag\\
&= c_1 \sup_{s\in(0,R^n)} \frac{s^{2-\gamma_0}}{s^{2-\gamma}+\delta}
\end{align}
for $t\in [0,\tau]$. A search for local extrema then yields that $\Phi(s):=\frac{s^{2-\gamma_0}}{s^{2-\gamma}+\delta}$ attains a local maximum at $s_0:=\Big(\frac{(2-\gamma_0)\delta}{\gamma_0-\gamma}\Big)^\frac1{2-\gamma}$, so that since $\delta < \frac{\gamma_0-\gamma}{2-\gamma_0}R^{n(2-\gamma)}$ by \eqref{deltachoice34acc}, we have $s_0\in(0,R^n)$ and thus
\begin{align}\label{vertiiiest2}
\sup_{s\in(0,R^n)} \frac{s^{2-\gamma_0}}{s^{2-\gamma}+\delta} &\geq \frac{s_0^{2-\gamma_0}}{s_0^{2-\gamma}+\delta}\notag\\
&=\frac{\Big(\frac{(2-\gamma_0)\delta}{\gamma_0-\gamma}\Big)^\frac{2-\gamma_0}{2-\gamma}}{\frac{(2-\gamma_0)\delta}{\gamma_0-\gamma}+\delta}\notag\\
&=\frac{\Big(\frac{2-\gamma_0}{\gamma_0-\gamma}\Big)^\frac{2-\gamma_0}{2-\gamma}}{\frac{2-\gamma}{\gamma_0-\gamma}}\cdot \delta^{-\frac{\gamma_0-\gamma}{2-\gamma}}\notag\\
&=(\gamma_0-\gamma)^{-\frac{\gamma_0-\gamma}{2-\gamma}}\cdot \frac{(2-\gamma_0)^\frac{2-\gamma_0}{2-\gamma}}{2-\gamma}\cdot \delta^{-\frac{\gamma_0-\gamma}{2-\gamma}}.
\end{align}
Combining (\ref{vertiiiest1}) and (\ref{vertiiiest2}) results in
\begin{align*}
\vertiii{w(\cdot,t)}_{\gamma_0} \geq c_3 \delta^{-\frac{\gamma_0-\gamma}{2-\gamma}} \qquad \text{for all }t \in[0,\tau]
\end{align*}
with $c_3$ as in \eqref{c3defgammabuacc} and hence, together with (\ref{wnormintest}),
\begin{align*}
c_2 \geq \int\limits_{0}^{\tau} \vertiii{w(\cdot,t)}_{\gamma_0} dt \geq c_3 \tau \delta^{-\frac{\gamma_0-\gamma}{2-\gamma}}.
\end{align*}
This implies that
\[
\delta \geq \Big(\f{c_3 \tau}{c_2}\Big)^{\frac{2-\gamma}{\gamma_0-\gamma}},
\]
contradicting \eqref{deltachoice34acc}. Therefore, (\ref{zerohypothesisw}) must be false.
Accordingly, if (\ref{initialconditiondiracmassw}) is satisfied, we may conclude (\ref{diracmassw}).
\end{bew}

On the other hand, we can acquire an upper bound for $w(0,t)$ by comparing to the approximating solutions $\weps$ and utilizing the monotone convergence property $\weps \nearrow w$ as $\eps \searrow 0$. Note that \eqref{initialconditionslowcollapsew} can be satisfied for any $w_0\in C^1([0,R^n])$ with $w_0(0)=0$ if we pick $a>0$ adequately large.

\begin{lemma}\label{slowcollapseacc}
Let $n\geq 1$, $R>0$, $m>0$, $\mu := \f{mn}{\omega_n R^n}$, $a>0$, $\gamma \in (0,1]$ and suppose $w_0$ satisfies (\ref{initacc}) and is bounded via
\begin{align}\label{initialconditionslowcollapsew}
w_0(s) \leq (as)^\gamma \qquad \text{for all }s\in[0,R^n].
\end{align}
Then for any $\tau_*>0$ there exists $c=c(\tau_*)>0$ such that $w$ as obtained in Lemma \ref{createdsol} satisfies
\begin{align}\label{slowcollapsew}
w(0,t) \leq ct^\gamma \qquad \text{for all}~t \in (0, \tau_*).
\end{align}
\end{lemma}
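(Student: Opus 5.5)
Since $w=\lim_{\eps\searrow0}\weps$ with $\weps\nearrow w$ pointwise, it suffices to produce, for each small $\eps$, an upper bound $\weps(s,t)\le \ow(s,t)$ in which $\ow$ does not depend on $\eps$. The bound should satisfy $\ow(s,t)\to c\,t^\gamma$ as $s\searrow0$ uniformly in $t\in(0,\tau_*)$. Passing first to $\eps\searrow0$ and then to $s\searrow0$, using monotonicity in $s$, then gives \eqref{slowcollapsew}. The natural candidate is a supersolution of the form
\[
\ow(s,t):=\big(A(s+bt)\big)^\gamma,
\]
possibly with a time-exponential factor $e^{\kappa t}$ and with constants $A\ge a$, $b>0$ chosen in terms of $\tau_*$, $R$, $m$, $n$. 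At $t=0$ we have $\ow(s,0)=(As)^\gamma\ge w_0(s)\ge w_{0\eps}(s)$ by \eqref{initialconditionslowcollapsew}. At $s=0$ we have $\ow\ge0=\weps(0,t)$. At $s=R^n$, we need $\ow(R^n,t)\ge \f m{\omega_n}$, which holds because $(AR^n)^\gamma\ge (aR^n)^\gamma\ge w_0(R^n)=\f m{\omega_n}$.

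The supersolution inequality for \eqref{0wepsacc} has to hold uniformly in $\eps$. Since $f_\eps(\xi)\le\xi$ for $\xi\ge0$ and $\ow_s\ge0$, it is enough to check
\[
\ow_t\ \ge\ n^2 s^{2-\f2n}\ow_{ss}+n\ow\,\ow_s-\mu s\ow_s .
\]
With $\sigma:=s+bt$ we have $\ow=A^\gamma\sigma^\gamma$, $\ow_t=bA^\gamma\gamma\sigma^{\gamma-1}$, $\ow_s=A^\gamma\gamma\sigma^{\gamma-1}$, and $\ow_{ss}=-A^\gamma\gamma(1-\gamma)\sigma^{\gamma-2}\le0$. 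The diffusion and drift terms therefore have a good sign, and the inequality reduces to $b\ge nA^\gamma\sigma^\gamma$. Here $\sigma\le R^n+b\tau_*$ is bounded on the time interval, so the remaining task is to choose $b$ large enough that $b\ge nA^\gamma(R^n+b\tau_*)^\gamma$.

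This choice of $b$ is the main obstacle. For $\gamma<1$ the right-hand side grows sublinearly in $b$, so a suitable $b$ exists. For $\gamma=1$ the condition becomes $b(1-nA\tau_*)\ge nAR^n$, which forces $\tau_*$ to be small.

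For $\gamma=1$, or if one prefers a uniform treatment of all $\gamma$, I would instead bound $n\ow\ow_s$ by $n\frac m{\omega_n}\ow_s$. This would use a truncated supersolution $\min\{\ow,\f m{\omega_n}\}$, which is allowed by the kinks permitted in Lemma \ref{halfcomparison}. Where the truncation is active, the constant $\f m{\omega_n}$ is itself a supersolution. On the region where $\ow<\f m{\omega_n}$, one has $n\ow\ow_s\le n\f m{\omega_n}\ow_s$, and the condition becomes the linear requirement $b\ge n\f m{\omega_n}$, independent of $\tau_*$.

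At a kink of the minimum, the comparison argument in the proof of Lemma \ref{halfcomparison} needs the touching to occur at a smooth point. Since the kink is concave, the first-order crossing argument still works. If this causes trouble, I would instead use a smooth concave cap. Lemma \ref{halfcomparison} is then applied with $a=0$, $f=f_\eps$, $\uw=\weps$ (which is Lipschitz in $s$ by the bound from Lemma \ref{wepsolacc}) and the constructed $\ow$ on $[0,\tau_*)$.

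The conclusion is $\weps\le\min\{(A(s+bt))^\gamma,\f m{\omega_n}\}$. Letting $\eps\searrow0$ and then $s\searrow0$ gives $w(0,t)\le (Ab)^\gamma t^\gamma$, so \eqref{slowcollapsew} holds with $c=(Ab)^\gamma$.
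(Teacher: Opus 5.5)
Your argument is correct and is essentially the paper's proof. The paper also compares $\weps$ with a supersolution $(y(t)+as)^\gamma$ via Lemma \ref{halfcomparison} and $f_\eps(\xi)\le\xi$, but it takes $y$ to solve $y'=an(y+aR^n)^\gamma$, $y(0)=0$, instead of your linear $y=Abt$; this absorbs $n\ow\ow_s$ on all of $[0,\tau_*]$ for every $\gamma\in(0,1]$ (for $\gamma=1$ through exponential growth of $y$), so neither a restriction on $\tau_*$ nor a truncation is needed. Your truncation at $\frac m{\omega_n}$ does work, because a concave corner cannot be touched from below by the $C^1$ function $\weps$, although this extends Lemma \ref{halfcomparison}, which as stated requires $\ow_s$ to be continuous. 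The $\gamma=1$ issue could also be handled more cheaply by combining a short-time bound with $w\le\frac m{\omega_n}$.
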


\begin{bew}
Let $y\in C^1([0,\infty))$ be the solution of
\begin{align*}
\left\lbrace
\begin{array}{r@{}l@{\quad}l@{\quad}l@{\,}c}
&y'(t) = an(y(t)+aR^n)^\gamma, \quad t>0,\\
&y(0) = 0.
\end{array}\right.
\end{align*}
Thereupon we introduce $\ow \in C^0([0,R^n]\times[0,\infty)) \cap C^{2,1}((0,R^n]\times[0,\infty))$ defined by
\[
\ow(s,t):=(y(t)+as)^\gamma, \qquad s\in[0,R^n],~t\geq 0.
\]
Recalling \eqref{fepsacc}, for any $\eps \in (0,1)$ a straightforward calculation now yields
\begin{eqnarray*}
\ow_t - n^2 s^{2-\f2n} \ow_{ss} - n \ow f_\eps(\ow_s) + \mu s \ow_s &=& \gamma (y+as)^{\gamma-1} y' + n^2 a^2 \gamma (1-\gamma) s^{2-\f2n} (y+as)^{\gamma-2}\\
&& - na\gamma (y+as)^\gamma f_\eps((y+as)^{\gamma-1}) + \mu a \gamma s (y+as)^{\gamma-1}\\
&\geq & \gamma (y+as)^{\gamma-1} y' + n^2 a^2 \gamma (1-\gamma) s^{2-\f2n} (y+as)^{\gamma-2}\\
&& - na\gamma (y+as)^{2\gamma-1} + \mu a \gamma s (y+as)^{\gamma-1}
\end{eqnarray*}
for all $s\in(0,R^n)$ and $t>0$, where the second and the fourth summand on the right-hand side are nonnegative. Consequently, we may further estimate
\begin{eqnarray*}
\ow_t - n^2 s^{2-\f2n} \ow_{ss} - n \ow f_\eps(\ow_s) + \mu s \ow_s &\geq & \gamma (y+as)^{\gamma-1} y'  - na\gamma (y+as)^{2\gamma-1}\\
&=& na\gamma (y+as)^{\gamma-1} (y+aR^n)^\gamma - na\gamma (y+as)^{2\gamma-1}\\
&\geq& 0.
\end{eqnarray*}
Obviously, $\ow$ is nonnegative and for $\eps\in (0,\f{R^n \omega_n}{m})$, the corresponding solutions $\weps$ of \eqref{0wepsacc} from Lemma \ref{wepsolacc} lie in $C^0([0,R^n]\times[0,\infty)) \cap C^{2,1}((0,R^n]\times(0,\infty))$ and satisfy $\weps(0,t)=0$ for all $t \geq 0$ as well as 
\[
w_{0\eps}(s)=\min\Big\{\f s\eps, w_0(s)\Big\} \leq w_0(s) \leq (as)^\gamma = \ow(s,0) \qquad \text{for all }s\in[0,R^n]
\]
by \eqref{initialconditionslowcollapsew}.
Hence we may utilize Lemma \ref{halfcomparison} to infer that
\[
\weps(s,t)\leq \ow(s,t) \qquad \text{for all }s\in(0,R^n] \times[0,\infty)
\]
and for any $\eps \in (0,1)$ with $\eps<\f{R^n \omega_n}{m}$. Passing to the limit $\eps \searrow 0$ subsequently results in
\begin{align}\label{slowcollapsecomp}
w(s,t)\leq \ow(s,t) \qquad \text{for all }s\in(0,R^n] \times[0,\infty).
\end{align}
Now for any fixed $\tau_*>0$, due to the continuity of $y$ there exists $c_1>0$ such that $y(t)\leq c_1$ in $[0,\tau_*]$, and thus 
\[
y'(t)\leq an(c_1+aR^n)^\gamma =: c_2 \qquad \text{in }[0,\tau_*].
\]
Accordingly, as $y(0)=0$, we have $y(t)\leq c_2 t$ for all $t\in[0,\tau_*]$.\\
Setting $c:=c_2^\gamma$, together with \eqref{slowcollapsecomp} we arrive at \eqref{slowcollapsew}.
\end{bew}

Hence, in stark contrast to known mass accumulation phenomena in two dimensions (\cite{Biler2008}), no instantaneous aggregation of some positive mass but rather a slow collapse into a Dirac singularity takes place here.

\begin{corollary}\label{corslowcollapseacc}
Let $n\geq 3$, $R>0$, $m>0$, $\mu = \f{mn}{\omega_n R^n}$, $\gamma_i \in(0,1-\f2n)$ for $i\in\{1,2\}$, $\uc > 0,~\oc > 0$, $\tau>0$ and $\eta>0$. Then there exists $\delta>0$ such that if $w_0$ is as in \eqref{initacc} and complies with
\begin{align}\label{doubleboundcoracc}
\uc \cdot \f{s^2}{s^{2-\gamma_1}+\delta} \leq w_0(s) \leq \oc \cdot s^{\gamma_2} \qquad \text{for all }s\in[0,R^n],
\end{align}
then $w$ as constructed in Lemma \ref{createdsol} has the property that
\begin{align}\label{collapsebutslowacc}
0<w(0,t_0)<\eta \qquad \text{for some }t_0\in (0,\tau).
\end{align}
\end{corollary}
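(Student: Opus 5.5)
The plan is to combine Lemma \ref{estwg0acc} (positivity of $w(0,\cdot)$ at some time) with Lemma \ref{slowcollapseacc} (the upper bound $w(0,t)\le ct^{\gamma_2}$ for the minimal solution). The key is to shrink the time window first, so that the upper bound forces smallness, and only then choose $\delta$.

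\textbf{Step 1: choose a short time window.} The upper bound in \eqref{doubleboundcoracc} reads $w_0(s)\le \oc s^{\gamma_2}=(as)^{\gamma_2}$ with $a:=\oc^{1/\gamma_2}$. So Lemma \ref{slowcollapseacc}, applied with $\tau_*:=\tau$, gives some $c=c(\tau)>0$, depending only on $n,R,m,\oc,\gamma_2,\tau$, such that
\[
w(0,t)\le c\,t^{\gamma_2}\qquad \text{for all } t\in(0,\tau).
\]
Importantly, this constant does not depend on $\delta$ or on any further feature of $w_0$, as the proof of that lemma shows. We then fix $\tau_1\in(0,\tau)$ so small that $c\,\tau_1^{\gamma_2}<\eta$.

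\textbf{Step 2: choose $\delta$ and obtain positivity.} Apply Lemma \ref{estwg0acc} with $\gamma:=\gamma_1$, $y_0:=\uc$, and with the time $\tau_1$ in place of $\tau$. This yields $\delta:=\delta_*(n,m,\gamma_1,\tau_1,\uc)$. If $w_0$ satisfies the lower bound in \eqref{doubleboundcoracc} with this $\delta$, then the minimal solution $w$ from Lemma \ref{createdsol} satisfies $w(0,t_0)>0$ for some $t_0\in(0,\tau_1)$. This $w$ is nonnegative, classical, and has $w_s\ge0$ by \eqref{wsgeq0asaprep}, so the lemma applies to it.

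\textbf{Step 3: combine.} From Step 1, at the same time $t_0$ we get $w(0,t_0)\le c\,t_0^{\gamma_2}<c\,\tau_1^{\gamma_2}<\eta$, which is \eqref{collapsebutslowacc}.

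\textbf{Checks.} The main point to verify is the order of dependencies: $c$ must be independent of $\delta$, and $\delta$ may depend on $\tau_1$. Both hold as sketched.

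A second technical point is that Lemma \ref{estwg0acc} internally requires its time horizon to lie below the blow-down time $T$ of $y$; I would interpret "fix $\tau\in(0,T)$" in its proof accordingly. Since $\tau_1$ may be taken arbitrarily small, this causes no difficulty, and we can replace $\tau_1$ by $\min\{\tau_1,T/2\}$ if needed.

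Finally, the two bounds in \eqref{doubleboundcoracc} must be compatible with \eqref{initacc} near $s=0$. This is a question of whether admissible data exist, not part of the implication being proved, since both bounds vanish at $0$ and $\gamma_1>\gamma_2$ is not needed.
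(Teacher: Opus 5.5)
Your proposal is correct and follows the paper's proof. The paper likewise uses Lemma \ref{slowcollapseacc} to make $w(0,\cdot)<\eta$ on a short window $(0,\tau_*)$, and only then takes $\delta$ from Lemma \ref{estwg0acc} with that window to force $w(0,t_0)>0$ for some $t_0\in(0,\tau_*)$. Your ordering of constants is slightly cleaner than the paper's phrasing: you fix $c=c(\tau)$ first and then shrink to $\tau_1$, whereas the paper picks $\tau_*$ small while $c=c(\tau_*)$ itself depends on $\tau_*$.
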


\begin{bew}
Due to the second inequality in \eqref{doubleboundcoracc}, for any given $\eta>0$ we may draw upon Lemma \ref{slowcollapseacc} to pick $\tau_* \leq \tau$ small enough to obey
\begin{align}\label{corcollapseproof1}
w(0,t) \leq ct^\gamma < \eta \qquad \text{for all}~t \in (0, \tau_*).
\end{align}
Thereupon, now utilizing the lower bound imposed on $w_0$ in \eqref{doubleboundcoracc}, therein we pick $\delta = \delta(n,m,\gamma_1,\tau_*,\uc) \in (0,1)$ as in Lemma \ref{estwg0acc} in such a manner that
\begin{align}\label{corcollapseproof2}
w(0,t_0) > 0 \qquad \text{for some }t_0 \in (0,\tau_*).
\end{align}
Merging \eqref{corcollapseproof1} and \eqref{corcollapseproof2}, we arrive at \eqref{collapsebutslowacc}.
\end{bew}

\section{Monotonicity and right-continuity of $w(0,\cdot)$}

\subsection{Monotonicity}\label{subsectionmonacc}

Our first aim in this section is to prove that $w(0,\cdot)$ is monotone increasing, thereby establishing the irreversibility of mass concentration. If for some $t_0\geq 0$ however, $w(0,t_0)$ is positive, we have no means to simply compare it to a positive constant from below, as that would require precisely the conclusion we are trying to prove as an a priori information in \eqref{uwowgeqaacc} of Lemma \ref{halfcomparison}. Instead, we shall rely on the following construction.
Let $n\geq 1$, $m>0$, 
\begin{lemma}\label{staticsubsolacc}
Let $n\geq 3$, $R>0$, $m>0$ and $\mu := \f{mn}{\omega_n R^n}$. Then for any choice of positive numbers $s_0,s_1,M_0$ and $M_1$ fulfilling
\begin{align}\label{s0s1M0M1}
0<s_0<s_1<R^n \quad \text{and} \quad 0<M_0<M_1<\f m{\omega_n},
\end{align}
one can find $s_*\leq s_0$ and a function 
\[
\uW = \uW^{(s_0,s_1,M_0,M_1)} \in W^{2,\infty}((0,R^n)) \cap C^\infty([0,R^n]\setminus\{s_*,s_1\})
\]
such that $\uW(0)=0$ and $\uW(R^n)=\f m{\omega_n}$, that
\begin{align}\label{s0M0s1M1}
\uW(s_0) \geq M_0 \quad \text{and} \quad \uW(s_1)\leq M_1,
\end{align}
that $\uW_s>0$ on $[0,R^n]$, that $\uW_{ss}\geq 0$ in $(s_1,R^n)$, and that
\begin{align}\label{diffineqexcepts*s1acc}
-n^2 s^{2-\f2n}\uW_{ss} - n \uW\uW_s + \mu s \uW_s \leq 0 \qquad \text{for all }s\in(0,R^n)\setminus\{s_*,s_1\}.
\end{align}
\end{lemma}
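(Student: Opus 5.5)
We will build $\uW$ piecewise from three pieces glued in a $C^1$ way at $s_*$ and $s_1$, so that $\uW\in W^{2,\infty}$ with jumps of $\uW_{ss}$ only at these two points. The inequality \eqref{diffineqexcepts*s1acc} reads $n^2 s^{2-\f2n}\uW_{ss}\ge \uW_s\,(\mu s - n\uW)$, which gives a natural split. Where $n\uW\ge\mu s$, any convex increasing function works, and concave pieces are allowed as long as the concavity is controlled by $n\uW\uW_s$. Where $\uW$ is small compared to $\mu s$, we need $\uW_{ss}\ge c\, s^{-1+\f2n}\uW_s$, i.e.\ strong convexity.

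\emph{Inner piece on $[0,s_*]$.} Take a steep linear-type profile $\uW(s)=As$ for $s\in[0,s_*]$, with $As_*=M_0'$ for some value $M_0'$ slightly above $M_0$. Then $\uW_{ss}=0$, and the inequality reduces to $n\uW\ge \mu s$, i.e.\ $nA\ge\mu$. This holds once $A$ is large, which we arrange by choosing $s_*$ small, since $A=M_0'/s_*$. It also gives $\uW_s>0$ near $0$ and $\uW(0)=0$.

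\emph{Middle piece on $[s_*,s_1]$.} Here we need to stay above $M_0$ at $s_0$ but remain $\le M_1$ at $s_1$, so $\uW$ should be nearly flat. Take a concave increasing piece, for instance
\[
\uW(s)=M_0'+A\,\Lambda(s-s_*),\qquad \Lambda(\sigma)=\f{1-e^{-\lambda\sigma}}{\lambda},
\]
which matches value and slope $A$ at $s_*$. Since $\uW\ge M_0'$ there, it suffices to have $n^2 s^{2-\f2n}\lambda\le nM_0'-\mu s_1$. This is possible if $nM_0'>\mu s_1$.

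The last condition need not hold, and this is the main obstacle, since the interval $[s_*,s_1]$ is long while $\uW$ is only of order $M_0$. Instead we let the middle piece satisfy an ODE exactly. We take $\uW$ solving
\[
n^2 s^{2-\f2n}\uW_{ss}=\uW_s(\mu s-n\uW)
\]
on $[s_*,s_1]$, with data $(M_0',A)$ at $s_*$. Then $\uW_s=A\exp\bigl(\int_{s_*}^s \f{\mu\sigma-n\uW}{n^2\sigma^{2-\f2n}}d\sigma\bigr)>0$. Where $n\uW<\mu s$ the solution is convex, but with bounded exponent since $s\ge s_*$. The difficulty is to ensure $\uW(s_1)\le M_1$ while $A$ is huge. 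For this we use that $n\uW/(n^2 s^{2-\f2n})$ is large near $s_*$: once $\uW$ exceeds $\mu s/n$, the factor decays like $\exp(-c\int \sigma^{-2+\f2n}d\sigma)$, which is of size $\exp(-c' s_*^{-1+\f2n})$ over a short range because $n\ge3$. This kills the slope $A=M_0'/s_*$ faster than any power, so the increase of $\uW$ over $[s_*,s_1]$ becomes arbitrarily small as $s_*\searrow0$. The total increase stays below $M_1-M_0'$ once $s_*$ is small. It is precisely here that we expect $n\ge3$ to enter (the case $n=2$ gives a logarithm).

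Monotonicity ensures $\uW(s_0)\ge M_0'\ge M_0$. The equality case of the inequality holds on the middle piece.

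\emph{Outer piece on $[s_1,R^n]$.} From $(\uW(s_1),\uW_s(s_1))$ with $\uW(s_1)\le M_1<\f m{\omega_n}$, we continue convexly up to $\f m{\omega_n}$ at $R^n$. For instance take $\uW_s(s)=\uW_s(s_1)e^{K(s-s_1)}$, choosing $K$ so that $\uW(R^n)=\f m{\omega_n}$; such $K$ exists by the intermediate value theorem, since $\uW(R^n)$ increases from a small value to $\infty$ as $K$ grows. The inequality needs $n^2s^{2-\f2n}K\ge \mu s$, which holds since $s\ge s_1$ and $K$ is large when $\uW_s(s_1)$ is tiny. If necessary we lower $M_0'$ slightly so that $\uW_s(s_1)$ is small enough, and we check positivity of $\uW_s$ and $\uW_{ss}\ge0$ on this piece. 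Smoothness away from $\{s_*,s_1\}$ and the $W^{2,\infty}$ regularity follow from the $C^1$ matching, and the three inequalities together give \eqref{diffineqexcepts*s1acc}.
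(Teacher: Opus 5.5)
Your construction works, but it distributes the work differently from the paper. The paper puts the saturation into the \emph{inner} piece. On $[0,s_*]$ it uses the concave profile $\uW=\f{as}{s+b}$ with $b=\delta s_*^2$ and $\uW(s_*)=M_0$. Its concavity is absorbed by $n\uW\uW_s$ as soon as $2ns_*^{1-\f2n}+\f\mu n(s_*+s_*^2)\le M_0$, which is where $n\ge3$ enters. The extra parameter $\delta$ makes the slope at $s_*$ as small as desired, independently of $s_*$. The middle piece $ce^{\lambda s}+k$ and the outer piece $de^{\kappa s}+l$ are convex exponentials. They satisfy the inequality by convexity alone, discarding $-n\uW\uW_s$, once $\lambda\ge\f{\mu}{n^2 s_*^{1-2/n}}$ and $\kappa\ge\f{\mu}{n^2 s_1^{1-2/n}}$. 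The tiny initial slope is what keeps $\uW(s_1)\le M_1$.

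You instead take a trivial linear inner piece with the huge slope $A=M_0'/s_*$ and let the exact ODE on $[s_*,s_1]$ do the saturation through the nonlinear damping; your outer piece is essentially the paper's. The paper's route is fully explicit and reduces everything to algebra. Yours avoids designing a saturation profile and gives equality on the middle piece. The price is a quantitative estimate of the ODE solution, which your sketch only gives heuristically.

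To make that step precise, use $\uW\ge M_0'$ on the middle piece together with $\int_{s_*}^{s}\f{\mu\sigma}{n^2\sigma^{2-2/n}}\,d\sigma\le\f{\mu R^2}{2n}$. What matters is that this bound is uniform in $s_*$, not merely finite for each fixed $s_*$. With $\beta:=1-\f2n$ this gives
\[
\uW_s(s)\le \f{M_0'}{s_*}\,e^{\f{\mu R^2}{2n}}\exp\Bigl(-\f{M_0'}{n-2}\bigl(s_*^{-\beta}-s^{-\beta}\bigr)\Bigr),\qquad s\in[s_*,s_1].
\]
The slope is killed faster than any power only on $[2s_*,s_1]$, whose contribution to $\int_{s_*}^{s_1}\uW_s$ is $O\bigl(s_*^{-1}e^{-cs_*^{-\beta}}\bigr)$. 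On the initial layer $[s_*,2s_*]$ the slope stays of order $A$ over a width of order $s_*^{2-\f2n}$. That layer contributes $O\bigl(ns_*^{1-\f2n}\bigr)$, essentially the same quantity the paper must make small in its inner piece.

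Both contributions vanish as $s_*\searrow0$. Hence $\uW(s_1)\le M_1$, and also $\uW_s(s_1)\to0$, which forces $K\ge\f{\mu}{n^2 s_1^{1-2/n}}$ on the outer piece. Drop the remark about lowering $M_0'$: that weakens the damping and conflicts with $M_0'\ge M_0$. The only lever you need is $s_*\searrow0$, together with $s_*\le s_0$.
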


\begin{bew}
Given $m>0$, $\mu = \f{mn}{\omega_n R^n}$ and $s_0, s_1, M_0,M_1$ as in \eqref{s0s1M0M1}, we may pick $s_* \in (0,R^n)$ small enough such that
\begin{align}\label{s*smallers0acc}
s_* \leq s_0
\end{align}
as well as
\begin{align}\label{s*smallenoughacc}
2n s_*^{1-\f2n} + \f{\mu}n (s_* +s_*^2)\leq M_0,
\end{align}
where in achieving the latter, we make use of our assumption that $n\geq 3$. Thereupon, we fix $\lambda>0$ satisfying
\begin{align}\label{lambdas*acc}
\lambda \geq \f{\mu}{n^2 s_*^{1-\f2n}},
\end{align}
and with
\begin{align}\label{kappa0defacc}
\kappa_0:= \f\mu{n^2 s_1^{1-\f2n}}
\end{align}
and set
\begin{align}\label{deltarestracc}
\delta := \min \bigg\{\f{M_1-M_0}{M_0(s_1-s_*)}e^{\lambda (s_*-s_1)}, \f{\f{m}{\omega_n}-M_1}{M_0} \cdot \f{\kappa_0 e^{\kappa_0 s_1}\cdot e^{\lambda (s_*-s_1)}}{e^{\kappa_0 R^n}-e^{\kappa_0 s_1}}, 1 \bigg\}>0.
\end{align}
Next, we successively write
\begin{align}\label{bandaacc}
b:=\delta s_*^2 \qquad \text{and} \qquad a:=M_0 \cdot \f{s_*+b}{s_*},
\end{align}
and select
\begin{align}\label{candkacc}
c:= \f{ab}{\lambda (s_*+b)^2} e^{-\lambda s_*} \quad \text{as well as} \quad k:= M_0 -ce^{\lambda s_*}.
\end{align}
Now using the fundamental theorem of calculus, \eqref{candkacc} as well as \eqref{bandaacc} and the first restriction in \eqref{deltarestracc}, we may estimate
\begin{align}\label{Wests1acc}
ce^{\lambda s_1} + k &=ce^{\lambda s_*} + k + \int_{s_*}^{s_1} \lambda c e^{\lambda s} ds \notag\\
&\leq ce^{\lambda s_*} + k + (s_1-s_*) \lambda c e^{\lambda s_1} \notag\\
&= M_0 + (s_1-s_*) \f{ab}{(s_*+b)^2} e^{\lambda (s_1-s_*)} \notag\\
&= M_0 + (s_1-s_*)\f{M_0 b}{s_*(s_*+b)} e^{\lambda (s_1-s_*)} \notag\\
&= M_0 + (s_1-s_*)\f{M_0 \delta s_*^2}{s_*^2+\delta s_*^3} e^{\lambda (s_1-s_*)} \notag\\
&\leq M_0 + (s_1-s_*)M_0 \delta e^{\lambda (s_1-s_*)} \notag\\
&\leq M_0 + (s_1-s_*)M_0 \f{M_1-M_0}{M_0(s_1-s_*)}e^{\lambda (s_*-s_1)} e^{\lambda (s_1-s_*)} \notag\\
&= M_0 + M_1 -M_0 \notag\\
&= M_1
\end{align}
and, similarly, by \eqref{candkacc}, \eqref{bandaacc} and the second restriction in \eqref{deltarestracc},
\begin{align}\label{Wsests1acc}
\lambda c e^{\lambda s_1} &= \f{ab}{(s_*+b)^2} e^{\lambda (s_1-s_*)} \notag\\
&= \f{M_0 b}{s_*(s_*+b)} e^{\lambda (s_1-s_*)} \notag\\
&= \f{M_0 \delta s_*^2}{s_*(s_*+\delta s_*^2)} e^{\lambda (s_1-s_*)} \notag\\
&\leq M_0 \delta e^{\lambda (s_1-s_*)} \notag\\
&\leq M_0 \cdot \f{\f{m}{\omega_n}-M_1}{M_0} \cdot \f{\kappa_0 e^{\kappa_0 s_1}\cdot e^{\lambda (s_*-s_1)}}{e^{\kappa_0 R^n}-e^{\kappa_0 s_1}} e^{\lambda (s_1-s_*)} \notag\\
&= \Big(\f{m}{\omega_n}-M_1\Big) \f{\kappa_0 e^{\kappa_0 s_1}}{e^{\kappa_0 R^n}-e^{\kappa_0 s_1}}.
\end{align}
Now \eqref{Wests1acc} and \eqref{Wsests1acc} allow us to obtain
\begin{align*}
\f{\f{m}{\omega_n}-ce^{\lambda s_1}-k}{\lambda c e^{\lambda s_1}} & \geq \f{\f{m}{\omega_n}-M_1}{\lambda c e^{\lambda s_1}} \\
&\geq \f{e^{\kappa_0 R^n}-e^{\kappa_0 s_1}}{\kappa_0 e^{\kappa_0 s_1}},
\end{align*}
whence we may rely on
\[
\f{e^{x R^n}-e^{x s_1}}{x e^{x s_1}} \to \infty \qquad \text{for }x \to \infty
\]
and the intermediate value theorem to assert the existence of $\kappa>0$ such that
\begin{align}\label{kappasubsolacc}
\kappa \geq \kappa_0 \quad \text{and} \quad \f{e^{\kappa R^n}-e^{\kappa s_1}}{\kappa e^{\kappa s_1}}=\f{\f{m}{\omega_n}-ce^{\lambda s_1}-k}{\lambda c e^{\lambda s_1}}.
\end{align}
Thereupon, we finally choose
\begin{align}\label{dsubsolacc}
d:= \f{\lambda c e^{\lambda s_1}}{\kappa e^{\kappa s_1}}
\end{align}
as well as
\begin{align}\label{lsubsolacc}
l:= \f{m}{\omega_n} - de^{\kappa R^n},
\end{align}
and define $\uW= \uW^{(s_0,s_1,M_0,M_1)}$ via
\begin{align*}
\uW(s)=
\left\lbrace
\begin{array}{r@{}l@{\quad}l@{\quad}l@{\,}c}
	&\f{as}{s+b}, \qquad &0\leq s\leq s_*,\\[1mm]
	&ce^{\lambda s} + k, \qquad &s_*< s < s_1,\\[1mm]
	&de^{\kappa s} + l, \qquad &s_1 \leq s \leq R^n.
\end{array}\right.
\end{align*}
We observe that the respective second declarations in \eqref{bandaacc} and \eqref{candkacc} ensure that
\begin{align*}
\uW(s_*)-\lim_{s \searrow s_*} \uW(s) &= \f{as_*}{s_*+b} - (ce^{\lambda s_*} + k) \\
&= M_0 - ce^{\lambda s_*} - M_0 + ce^{\lambda s_*}\\
&= 0,
\end{align*}
whereas by \eqref{dsubsolacc}, \eqref{lsubsolacc} and \eqref{kappasubsolacc}
\begin{align*}
\uW(s_1)-\lim_{s \nearrow s_1} \uW(s) &= de^{\kappa s_1} + l - (ce^{\lambda s_1} + k) \\
&= \f{\lambda c e^{\lambda s_1}}{\kappa e^{\kappa s_1}} e^{\kappa s_1} + \f{m}{\omega_n} - \f{\lambda c e^{\lambda s_1}}{\kappa e^{\kappa s_1}} e^{\kappa R^n} - (ce^{\lambda s_1} + k) \\
&= \f{\lambda c e^{\lambda s_1}}{\kappa e^{\kappa s_1}} (e^{\kappa s_1}-e^{\kappa R^n}) + \f{m}{\omega_n} - (ce^{\lambda s_1} + k) \\
&= \lambda c e^{\lambda s_1}\cdot \f{e^{\kappa s_1}-e^{\kappa R^n}}{\kappa e^{\kappa s_1}} + \f{m}{\omega_n} - ce^{\lambda s_1} - k \\
&= - \Big(\f{m}{\omega_n} - ce^{\lambda s_1} - k\Big) + \f{m}{\omega_n} - ce^{\lambda s_1} - k \\
&= 0.
\end{align*}
Hence $\uW$ is continuous on $[0,R^n]$. Moreover, computing
\begin{align*}
\uW_s(s)=
\left\lbrace
\begin{array}{r@{}l@{\quad}l@{\quad}l@{\,}c}
	&\f{ab}{(s+b)^2}, \qquad &0\leq s < s_*,\\[1mm]
	&\lambda ce^{\lambda s}, \qquad &s_*< s < s_1,\\[1mm]
	&\kappa de^{\kappa s}, \qquad &s_1 < s \leq R^n,
\end{array}\right.
\end{align*}
we may utilize the definition of $c$ in \eqref{candkacc} to obtain
\begin{align*}
\lim_{s\nearrow s_*} \uW_s(s)-\lim_{s \searrow s_*} \uW_s(s) &= \f{ab}{(s_*+b)^2} - \lambda ce^{\lambda s_*}\\
&= \f{ab}{(s_*+b)^2} - \lambda \cdot \f{ab}{\lambda (s_*+b)^2} e^{-\lambda s_*} e^{\lambda s_*}\\
&= 0,
\end{align*}
which together with
\begin{align*}
\lim_{s\nearrow s_1} \uW_s(s)-\lim_{s \searrow s_1} \uW_s(s) &= \lambda ce^{\lambda s_1} - \kappa de^{\kappa s_1}\\
&= \lambda ce^{\lambda s_1} - \kappa \f{\lambda c e^{\lambda s_1}}{\kappa e^{\kappa s_1}}e^{\kappa s_1}\\
&= 0,
\end{align*}
once more established by invoking \eqref{dsubsolacc}, yields the conclusion that actually $\uW \in C^1([0,R^n])$ and $\uW_s>0$ on $[0,R^n]$.\\
Since 
\[
\uW(s_*)=\f{as_*}{s_*+b}=M_0
\]
by \eqref{bandaacc} and $s_*\leq s_0$ by \eqref{s*smallers0acc}, the monotonicity of $\uW$ implies that $\uW(s_0)\geq M_0$. On the other hand, \eqref{Wests1acc} asserts $\uW(s_1)\leq M_1$, and thus \eqref{s0M0s1M1}.\\
We also easily obtain $\uW(0)=0$ and, once more due to \eqref{lsubsolacc},
\[
\uW(R^n)=de^{\kappa R^n} + l = \f{m}{\omega_n}.
\]
As
\begin{align*}
\uW_{ss}(s)=
\left\lbrace
\begin{array}{r@{}l@{\quad}l@{\quad}l@{\,}c}
	&\f{-2ab}{(s+b)^3}, \qquad &0\leq s < s_*,\\[1mm]
	&\lambda^2 ce^{\lambda s}, \qquad &s_*< s < s_1,\\[1mm]
	&\kappa^2 de^{\kappa s}, \qquad &s_1 < s \leq R^n,
\end{array}\right.
\end{align*}
we observe that indeed $\uW_{ss}\geq 0$ in $(s_1,R^n)$, and that additionally, once more falling back on \eqref{deltarestracc} to estimate $b=\delta s_*^2 \leq s_*^2$, we acquire
\begin{align}\label{innerregionacc}
-n^2 s^{2-\f2n}\uW_{ss} - n \uW \uW_s + \mu s \uW_s &= \f{2n^2 ab s^{2-\f2n}}{(s+b)^3} - \f{n a^2 b s}{(s+b)^3}+ \f{\mu ab s}{(s+b)^2} \notag\\
&= \f{nabs}{(s+b)^3} \cdot \Big\{2ns^{1-\f2n}-a+\f{\mu}n (s+b) \Big\} \notag\\
&\leq \f{nabs}{(s+b)^3} \cdot  \Big\{2ns_*^{1-\f2n}-a+\f{\mu}n (s_*+b) \Big\} \notag\\
&= \f{nabs}{(s+b)^3} \cdot  \Big\{2ns_*^{1-\f2n}-M_0 \cdot \f{s_*+b}{s_*}+\f{\mu}n (s_*+b) \Big\} \notag\\
&\leq \f{nabs}{(s+b)^3} \cdot  \Big\{2ns_*^{1-\f2n}-M_0 +\f{\mu}n (s_*+s_*^2) \Big\} \notag\\
&\leq 0 \qquad \text{for }s\in [0,s_*),
\end{align}
where we relied on \eqref{bandaacc} and \eqref{s*smallenoughacc}. In the interior region we draw on our restriction on $\lambda$ in \eqref{lambdas*acc} to see that
\begin{align}\label{interiorregionacc}
-n^2 s^{2-\f2n}\uW_{ss} - n \uW \uW_s + \mu s \uW_s &= -n^2 \lambda^2 c s^{2-\f2n} e^{\lambda s}- n (c e^{\lambda s} + k) c \lambda e^{\lambda s} + \mu \lambda c s e^{\lambda s} \notag\\
&= c\lambda e^{\lambda s} \Big\{ -n^2 \lambda s^{2-\f2n} - n (c e^{\lambda s} + k) + \mu s \Big\} \notag\\
&\leq c\lambda e^{\lambda s} \Big\{ -n^2 \lambda s^{2-\f2n} + \mu s \Big\} \notag\\
&= cs\lambda e^{\lambda s} \Big\{\mu -n^2 \lambda s^{1-\f2n} \Big\} \notag\\
&\leq cs\lambda e^{\lambda s} \Big\{\mu -n^2 \lambda s_*^{1-\f2n} \Big\} \notag\\
&\leq 0 \qquad \text{for }s\in (s_*,s_1),
\end{align}
whereas similarly, in the outer region we resort to $\kappa \geq \kappa_0=\f\mu{n^2 s_1^{1-\f2n}}$ by \eqref{kappasubsolacc} and \eqref{kappa0defacc} to obtain
\begin{align}\label{outerregionacc}
-n^2 s^{2-\f2n}\uW_{ss} - n \uW \uW_s + \mu s \uW_s &= -n^2 \kappa^2 d s^{2-\f2n} e^{\kappa s}- n (d e^{\kappa s} + l) d \kappa e^{\kappa s} + \mu \kappa d s e^{\kappa s} \notag\\
&= d\kappa e^{\kappa s} \Big\{ -n^2 \kappa s^{2-\f2n} - n (d e^{\kappa s} + l) + \mu s \Big\} \notag\\
&\leq d\kappa e^{\kappa s} \Big\{ -n^2 \kappa s^{2-\f2n} + \mu s \Big\} \notag\\
&= ds\kappa e^{\kappa s} \Big\{\mu -n^2 \kappa s^{1-\f2n} \Big\} \notag\\
&\leq ds\kappa e^{\kappa s} \Big\{\mu -n^2 \kappa_0 s_1^{1-\f2n} \Big\} \notag\\
&\leq 0 \qquad \text{for }s\in (s_1,R^n].
\end{align}
By combining \eqref{innerregionacc}, \eqref{interiorregionacc} and \eqref{outerregionacc}, we arrive at \eqref{diffineqexcepts*s1acc} and thereby complete the proof.
\end{bew}

We could apply  this construction directly to prove the monotonicity asserted in Lemma \ref{infmoninclemacc}. However, in order to simultaneously prepare for Lemma \ref{wtomomeganforsgreater0acclemma}, we proceed along a slightly different route.\\
The function $\uW$ has the remarkable property that solutions of \eqref{0wacc} emanating from it never drop below it; in fact, the corresponding minimal nonnegative solution is monotone increasing in time:

\begin{lemma}\label{uWstartingdata}
Let $n\geq 3$, $R>0$, $m>0$ and $\mu := \f{mn}{\omega_n R^n}$, and suppose that $s_0,s_1, M_0$ and $M_1$ are such that
\[
0<s_0<s_1<R^n \quad \text{and} \quad 0<M_0<M_1<\f m{\omega_n},
\]
and with $\uW^{(s_0,s_1,M_0,M_1)} \in W^{2,\infty}((0,R^n))\cap C^\infty([0,R^n]\setminus \{s_*,s_1\})$ for some $s_*\leq s_0$ taken from Lemma \ref{staticsubsolacc}, let $w=\lim_{\eps \searrow 0} \weps$ denote the minimal nonnegative solution of \eqref{0wacc} emanating from $w_0:=\uW^{(s_0,s_1,M_0,M_1)}$ in $(0,R^n)\times(0,\infty)$ obtained in Lemma \ref{createdsol}. Then
\begin{align}\label{wtgeq0acc}
w_t\geq 0 \qquad \text{in }(0,R^n]\times(0,\infty);
\end{align}
in particular,
\begin{equation}\label{wgequWacc}
w(s,t)\geq \uW^{(s_0,s_1,M_0,M_1)}(s) \qquad \text{for all }s\in(0,R^n]~\text{and}~t\geq 0.
\end{equation}
\end{lemma}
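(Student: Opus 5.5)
Let $\uW:=\uW^{(s_0,s_1,M_0,M_1)}$. The plan is to prove time monotonicity first at the level of the approximations $\weps$ from Lemma \ref{wepsolacc}, with initial data $w_{0\eps}=\min\{\f s\eps,\uW\}$, and then pass to the limit $\eps\searrow 0$ via Lemma \ref{createdsol}. The basic mechanism is the classical one: if the initial datum is a (weak) stationary subsolution, then the solution is nondecreasing in time. To see this, compare $\weps(\cdot,\cdot+h)$ with $\weps$ for fixed $h>0$.

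\textbf{Step 1: $w_{0\eps}$ is a stationary subsolution of the $\eps$-problem.}
For small $\eps$ we use $\uW_s\le \sup\uW_s<\f1\eps$, which holds since $\uW\in C^1([0,R^n])$. Then $f_\eps(\uW_s)=\uW_s$, so \eqref{diffineqexcepts*s1acc} states exactly that $\uW$ is a stationary subsolution of \eqref{0wepsacc} away from $\{s_*,s_1\}$.

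The function $s/\eps$ is trivially a stationary subsolution, because
\[
-\f\mu\eps s+\f n\eps\cdot\f s\eps\ \ge 0 \quad\text{requires}\quad \f n\eps\ge \mu .
\]
This holds for small $\eps$; for $s/\eps$ we have $f_\eps(1/\eps)$, which should be adjusted by taking $f_\eps(\xi)=\xi$ for $\xi\le 1/\eps$ after slight modification, or one simply works with $f_\eps(1/\eps)\ge$ something positive.

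The minimum of two subsolutions is not a subsolution in general. It is a viscosity subsolution, however, only if the kink is concave, and $\min$ produces concave kinks, which fits subsolutions in viscosity sense. So the min is admissible.

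There is a subtlety: Lemma \ref{halfcomparison} wants pointwise inequalities off a discrete set, and the crossing set of $s/\eps$ and $\uW$ near $0$ is discrete, since $\uW$ is concave there and the crossing is a single point. Thus $w_{0\eps}$ is Lipschitz, piecewise $C^2$ with finitely many breakpoints, satisfies the stationary inequality off them, and all kinks are concave.

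\textbf{Step 2: comparison with the time shift.}
By Step 1, $w_{0\eps}$, regarded as a time-independent function, satisfies the hypotheses of Lemma \ref{halfcomparison} as a subsolution, with $f=f_\eps$ and $a=0$. The boundary data $0$ and $\f m{\omega_n}$ match. The only defect is that Lemma \ref{halfcomparison} as proved handles $C^2$ off $N(t)$ with $z_{ss}$ limits. At a concave kink of the subsolution, $z=\ow-\uw$ has a convex kink, so $z_s$ cannot vanish there at a touching minimum, and the argument goes through.

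This yields $\weps(\cdot,t)\ge w_{0\eps}$ for all $t$. Then $\weps(\cdot,\cdot+h)$ is a solution with initial datum $\ge w_{0\eps}$, so the standard comparison for \eqref{0wepsacc} (or Lemma \ref{halfcomparison} with $\uw=\weps$, which has bounded $s$-derivative by the Lipschitz bound in the proof of Lemma \ref{wepsolacc}) gives $\weps(s,t+h)\ge\weps(s,t)$. Hence $\wepst\ge0$.

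\textbf{Step 3: limit.}
By \eqref{wepswlimitacc} the convergence holds in $C^{2,1}_{loc}((0,R^n]\times(0,\infty))$, so $w_t\ge0$, which is \eqref{wtgeq0acc}. Moreover $w\ge\weps\ge w_{0\eps}\nearrow\uW$, which gives \eqref{wgequWacc}.

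\textbf{Main obstacle.}
The delicate point is Step 1 together with applying Lemma \ref{halfcomparison} to a piecewise subsolution with breakpoints. At $s_*$ and $s_1$ the function $\uW$ is $C^1$, so no kink arises and \eqref{compregt} applies directly with $N=\{s_*,s_1\}$. The genuine kink comes from the truncation $\min\{s/\eps,\cdot\}$, together with $f_\eps$ possibly differing from the identity for the steep piece.

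To avoid this, I would alternatively bypass the truncation. Apply Lemma \ref{halfcomparison} with $\uw=\uW$ and $\ow=w$ directly, using $a=0$ and the facts $\uW(0)=0$ and $\uW\in W^{1,\infty}$. This gives $w\ge\uW$ immediately. For the monotonicity, compare each $\weps(\cdot,\cdot+h)$ with $\weps$, using $\weps(\cdot,h)\ge w_{0\eps}$, which is obtained from comparing with the subsolution $\min\{s/\eps,\uW\}$ as above.
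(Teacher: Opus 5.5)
Your overall architecture matches the paper's: a stationary subsolution for the $\eps$-problem gives $\weps\ge\weps(\cdot,0)$, then $\weps(\cdot,\cdot+h)$ is compared with $\weps$, then one passes to the limit via \eqref{wepswlimitacc}. However, your justification that $w_{0\eps}=\min\{\f s\eps,\uW\}$ is a subsolution is wrong.

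It is the \emph{maximum} of two subsolutions that is a viscosity subsolution. A minimum of two transversally crossing functions has a concave corner, and a concave corner is exactly what a subsolution cannot have. Smooth functions touching it from above can have arbitrarily negative second derivative, while $n^2s^{2-\f2n}>0$. Already for the heat equation, the solution emanating from $-|x|$ drops below $-|x|$ at the corner instantly.

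Your claim that Lemma \ref{halfcomparison} survives such a corner fails for the same reason. Condition \eqref{comp2} requires $\uw_s$ to be continuous, and this is not cosmetic. At a convex corner of $z=\ow-\uw$, the first touching can occur precisely at the corner, with negative left and positive right derivative. Then \eqref{compscross} and \eqref{compzss} are unavailable and no contradiction arises. Had a corner really been present, $\weps\ge w_{0\eps}$ and $\wepst\ge0$ would actually be false near it.

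The missing observation is that there is no corner. You already note $\sup\uW_s<\f1\eps$ for small $\eps$. Since $\uW\in C^1([0,R^n])$ with $\uW(0)=0$, this gives $\uW(s)=\int_0^s\uW_s(\sigma)\,d\sigma\le\f s\eps$. Hence $w_{0\eps}=\uW$ for every $\eps<\eps_0$, where $\f1{\eps_0}\ge\sup\uW_s$.

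This is precisely the paper's argument. For such $\eps$ one also has $f_\eps(\uW_s)=\uW_s$, so \eqref{diffineqexcepts*s1acc} says that $\uw(s,t):=\uW(s)$ satisfies \eqref{compdiffinequ} with $f=f_\eps$ and $N(t)=\{s_*,s_1\}$. Since $\uW$ is $C^1$ at these points, \eqref{comp2} holds. Lemma \ref{halfcomparison} then yields $\weps\ge\uW=\weps(\cdot,0)$, and your Steps 2 and 3 go through. Since only $\eps\searrow0$ matters, discarding $\eps\ge\eps_0$ costs nothing, and the discussion of $\f s\eps$ as a subsolution is superfluous.

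Two side remarks. First, your direct comparison of $\uW$ with $w$ (with $f(\xi)=\xi$ and $a=0$) is valid and gives \eqref{wgequWacc} without going through \eqref{wtgeq0acc}. Second, in the time-shift step, the bound $\weps\le c_1(\eps)e^{\kappa(\eps)t}s$ inherited from the proof of Lemma \ref{wepsolacc} only controls difference quotients at $s=0$, not $\weps_s$ on $(0,R^n)$. What actually makes Lemma \ref{halfcomparison} usable with $\uw=\weps$ is that $0\le f_\eps\le 1+\f1\eps$ on $[0,\infty)$, so $\kappa$ in \eqref{compkappa} can be fixed without any bound on $\uw_s$.
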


\begin{bew}
Due to the regularity properties of $\uW:=\uW^{(s_0,s_1,M_0,M_1)}$, we can fix $\eps_0\in(0,\f{R^n \omega_n}{m})$ such that
\[
\uW_s(s) \leq \f1{\eps_0} \qquad \text{for all }s\in(0,R^n),
\]
and note that this guarantees that for each $\eps\in(0,\eps_0)$, we both have
\[
f_\eps(\uW_s(s)) = \uW(s) \qquad \text{for all }s\in(0,R^n)
\]
as well as
\[
\uW(s)\leq \f{s}\eps \qquad \text{for all }s\in(0,R^n),
\]
because $f_\eps(\xi)=\xi$ for all $\xi \in [0,\f1\eps]$ and $\uW(0)=0$.\\
Therefore, for any such $\eps$ the solution $\weps$ of \eqref{0wepsacc} actually satisfies
\begin{align}\label{wepsuWacc}
\left\lbrace
\begin{array}{r@{}l@{\quad}l@{\quad}l@{\,}c}
	&\wepst = n^2 s^{2-\frac{2}{n}} \wepsss + n\weps f_\eps(\wepss) - \mu s \wepss,
	\qquad & s\in (0,R^n), \ t>0, \\[1mm]
	&\weps(0,t)=0, \quad \weps(R^n,t)=\frac{m}{\omega_n},
	\qquad & t>0, \\[1mm]
	& \weps(s,0)=\uW(s), \qquad
	& s\in (0,R^n),
\end{array}\right.
\end{align}
whereas using Lemma \ref{staticsubsolacc}, we see that for $\uw(s,t):=\uW(s),~s\in[0,R^n],~t\geq 0$, we have
\begin{align*}
\left\lbrace
\begin{array}{r@{}l@{\quad}l@{\quad}l@{\,}c}
	&\uw_t \leq n^2 s^{2-\frac{2}{n}} \uw_{ss} + n\uw f_\eps(\uw_s) - \mu s \uw_s,
	\qquad & s\in (0,R^n), \ t>0, \\[1mm]
	&\uw(0,t)=0, \quad \uw(R^n,t)=\frac{m}{\omega_n},
	\qquad & t>0, \\[1mm]
	& \uw(s,0)=\uW(s), \qquad
	& s\in (0,R^n).
\end{array}\right.
\end{align*}
An application of the comparison principle from Lemma \ref{halfcomparison} thus warrants that $\weps \geq \uw$ in $[0,R^n]\times[0,\infty)$ for all $\eps\in(0,\eps_0)$, which in particular entails that for each fixed $h\in(0,1)$, 
\[
\weps^{(h)}(s,t):=\weps(s,t+h), \qquad (s,t)\in[0,R^n]\times[0,\infty), 
\]
has the property that
\[
\weps^{(h)}(s,0)=\weps(s,h)\geq \uW(s)=\weps(s,0) \qquad \text{for all }s\in[0,R^n]~\text{and}~\eps\in(0,\eps_0).
\]
As both the the parabolic equation and the boundary condition in \eqref{wepsuWacc} are autonomous and hence as well satisfied by $\weps^{(h)}$, we may thus once more employ Lemma \ref{halfcomparison} to infer that
\[
\weps^{(h)}-\weps \geq 0 \qquad \text{in }[0,R^n]\times[0,\infty).
\]
Upon dividing by $h$ and taking $h\searrow 0$, this shows that $\wepst \geq 0$ in $(0,R^n]\times(0,\infty)$, so that \eqref{wtgeq0acc} and hence also \eqref{wgequWacc} result from \eqref{wepswlimitacc}.
\end{bew}

We now compare $w$ to these minimal nonnegative solutions emanating from $\uW=\uW^{(s_0,s_1,M_0,M_1)}$. The resulting statement is also pertinent to Lemma \ref{wtomomeganforsgreater0acclemma} later on.

\begin{lemma}\label{risingsubsolsol}
Let $n\geq 3$, $R>0$, $m>0$, $\mu := \f{mn}{\omega_n R^n}$, assume \eqref{initacc} holds, and suppose that $w\in C^0((0,R^n]\times[0,T))\cap C^{2,1}((0,R^n]\times (0,T))$ is a  nonnegative classical solution of \eqref{0wacc} in $(0,R^n)\times(0,T)$ for some $T\in(0,\infty]$, which is such that $w_s\geq 0$ in $(0,R^n)\times(0,T)$ and that
\begin{align}
\inf_{s\in(0,R^n)} w(s,t_0) > 0 \qquad \text{for some}~t_0\in[0,T).
\end{align}
Then for any choice of $s_0\in(0,R^n)$ and each $\eta>0$, there exists a nonnegative classical solution $\uw\in C^0((0,R^n]\times[t_0,\infty))\cap C^{2,1}((0,R^n]\times(t_0,\infty))$ of the boundary value problem in \eqref{0wacc} which has the properties that $\uw_s \geq 0$ in $(0,R^n)\times(t_0,\infty)$, that
\begin{align}\label{uwtgeq02acc}
\uw_t(s,t)\geq 0 \qquad \text{for all }s\in(0,R^n]~\text{and}~t\geq t_0,
\end{align}
that
\begin{align}\label{uwisasubsolacc}
\uw(s,t)\leq w(s,t) \qquad \text{for all }s\in(0,R^n]~\text{and}~t\in[t_0,T),
\end{align}
and
\begin{align}\label{uws0geqM0acc}
\uw(s_0,t)\geq \inf_{s\in(0,R^n)}w(s,t_0)-\eta \qquad \text{for all }t\geq t_0.
\end{align}
\end{lemma}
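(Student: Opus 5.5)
Set $M:=\inf_{s\in(0,R^n)} w(s,t_0)>0$. We may assume $\eta<M$ (otherwise we shrink $\eta$). Put $M_0:=M-\eta$. Since $w(\cdot,t_0)$ is continuous on $(0,R^n]$ with $w(R^n,t_0)=\f m{\omega_n}$, and since $M\le \f m{\omega_n}$, we can choose $M_1\in(M_0,\f m{\omega_n})$. We also choose $s_1\in(s_0,R^n)$ close to $R^n$. Lemma~\ref{staticsubsolacc} then gives $\uW:=\uW^{(s_0,s_1,M_0,M_1)}$, and we define $\uw(s,t):=\hat w(s,t-t_0)$, where $\hat w$ is the minimal nonnegative solution of \eqref{0wacc} emanating from $\uW$ (Lemma~\ref{createdsol}). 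The initial datum $\uW$ satisfies \eqref{initacc}, because $\uW(0)=0$, $\uW(R^n)=\f m{\omega_n}$, $\uW_s>0$, and $\uW$ is Lipschitz. Consequently $\uw$ is a nonnegative classical solution with $\uw_s\ge0$. Lemma~\ref{uWstartingdata} yields \eqref{uwtgeq02acc}. It also yields $\uw(s_0,t)\ge \uW(s_0)\ge M_0=M-\eta$, which is \eqref{uws0geqM0acc}. The equation is autonomous, so the time shift is harmless.

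It remains to prove \eqref{uwisasubsolacc}. I would do this by comparing the approximations $w_\eps$ associated with $\uW$ (time-shifted by $t_0$) with $w$ via Lemma~\ref{halfcomparison}, and then letting $\eps\searrow0$. In that lemma we take $a:=0$. Then $\ow:=w\ge0$ gives \eqref{uwowgeqaacc}, and $w_\eps(0,t)=0$ gives \eqref{comp0}. The subsolution $w_\eps$ lies in $C^0([0,R^n]\times[t_0,T))$ and has bounded $s$-derivative on compact time intervals (Lemma~\ref{wepsolacc} together with the linear bound from its proof). We use $f:=f_\eps$. Since $f_\eps(\xi)\le\xi$ for $\xi\ge0$ and $w\ge0$, $w$ is a supersolution of the $f_\eps$-equation, exactly as in Lemma~\ref{minimalsolacc}. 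At $s=R^n$ both functions equal $\f m{\omega_n}$, which gives \eqref{compRn}.

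The crux is the initial ordering \eqref{compinit}: we need $\min\{s/\eps,\uW(s)\}\le w(s,t_0)$ on $(0,R^n]$, and it suffices to show $\uW\le w(\cdot,t_0)$. On $(0,s_1]$ this holds because $\uW\le \uW(s_1)\le M_1$. Here I would refine the choice and take $M_1\le M$, i.e. pick $M_1\in(M-\eta,M]$, so that $M_1<\f m{\omega_n}$ is still possible (if $M=\f m{\omega_n}$, take $M_1$ slightly below). Then $\uW\le M_1\le M\le w(\cdot,t_0)$ on $(0,s_1]$.

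On $(s_1,R^n]$, convexity of $\uW$ ($\uW_{ss}\ge0$ there) gives $\uW(s)\le M_1+\f{s-s_1}{R^n-s_1}(\f m{\omega_n}-M_1)$. This linear interpolation must lie below $w(\cdot,t_0)$. Since $w(\cdot,t_0)$ is continuous on $(0,R^n]$ with $w(R^n,t_0)=\f m{\omega_n}$, and presumably $w_s$ is bounded near $R^n$ (for $t_0>0$ by interior/boundary regularity; for $t_0=0$ since $w_0\in W^{1,\infty}_{loc}((0,R^n])$), we have $w(s,t_0)\ge \f m{\omega_n}-L(R^n-s)$ near $R^n$. I expect this to be the main obstacle: the interpolating line has slope $(\f m{\omega_n}-M_1)/(R^n-s_1)$, which blows up as $s_1\to R^n$, while $w(\cdot,t_0)$ may be much flatter near $R^n$. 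So shrinking $s_1$ alone does not work.

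I would first try to handle this by pushing $M_1$ close to $\f m{\omega_n}$ rather than to $M$. Since $w(\cdot,t_0)\ge M$ everywhere, on $(0,s_1]$ I only need $\uW\le w(\cdot,t_0)$. The bound $\uW\le M_0=M-\eta$ on $(0,s_*]$ is automatic, but on $(s_*,s_1)$ the exponential piece rises steeply toward $M_1$. To control that region, I would choose $s_1$ where $w(s_1,t_0)$ is close to $\f m{\omega_n}$ and set $M_1:=\min\{w(s_1,t_0),\ldots\}$, checking that the exponential piece stays below $\min_{[s_*,s_1]}w(\cdot,t_0)$. If the explicit bounds from the proof of Lemma~\ref{staticsubsolacc} do not suffice for this, the fallback is a modified construction with an extra parameter, so that $\uW\le w(\cdot,t_0)$ holds outright.

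Once \eqref{compinit} is established, Lemma~\ref{halfcomparison} gives $w_\eps\le w$ on $[t_0,T)$ for every small $\eps$. Passing to the limit $\eps\searrow0$ then yields \eqref{uwisasubsolacc}.
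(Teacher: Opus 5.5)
\textbf{There is a genuine gap.} Most of your structure matches the paper's proof:
\begin{itemize}
\item the choices $M_0=M-\eta$ and $M_1\in(M-\eta,M]$ (the paper takes $M_1=M-\f\eta2$);
\item $\uw$ as the time-shifted minimal solution emanating from $\uW^{(s_0,s_1,M_0,M_1)}$;
\item Lemma \ref{uWstartingdata} for \eqref{uwtgeq02acc} and \eqref{uws0geqM0acc};
\item the bound $\uW\le\uW(s_1)\le M_1\le M\le w(\cdot,t_0)$ on $(0,s_1]$;
\item comparison of the approximations with $w$ via $f_\eps(\xi)\le\xi$, as in Lemma \ref{minimalsolacc}.
\end{itemize}
The gap is the initial ordering on $(s_1,R^n]$, which \eqref{compinit}, and hence \eqref{uwisasubsolacc}, depends on. You do not establish it, and your reason for discarding the natural choice of $s_1$ near $R^n$ is backwards. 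Your fallback (raising $M_1$ toward $\f m{\omega_n}$, or modifying Lemma \ref{staticsubsolacc}) is left unverified and is not needed. Raising $M_1$ above $M$ would also destroy the easy bound on $(s_*,s_1)$.

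\textbf{How to close it.} Your chord estimate can be rewritten as
\[
\uW(s)\le \f m{\omega_n}-\sigma\,(R^n-s) \quad \text{for } s\in(s_1,R^n], \qquad \sigma:=\f{\f m{\omega_n}-M_1}{R^n-s_1}.
\]
Now fix $L>0$ with $w_s(\cdot,t_0)\le L$ a.e.\ on $(s_0,R^n)$. This is possible by the $C^{2,1}$ regularity up to $s=R^n$ if $t_0>0$, and by $w_0\in W^{1,\infty}_{loc}((0,R^n])$ if $t_0=0$. The mean value theorem then gives $w(s,t_0)\ge \f m{\omega_n}-L(R^n-s)$ on $[s_0,R^n]$.

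Both lines pass through $(R^n,\f m{\omega_n})$, so the steeper one lies below the flatter one to the left of $R^n$. Hence $\sigma\ge L$ already yields $\uW\le w(\cdot,t_0)$ on $(s_1,R^n]\subset[s_0,R^n]$. Since $L$ depends only on $s_0$ and $w(\cdot,t_0)$, you can fix $M_1$ first and then choose $s_1\in(s_0,R^n)$ with $R^n-s_1\le(\f m{\omega_n}-M_1)/L$. This is exactly the paper's choice, and Lemma \ref{staticsubsolacc} accepts any such $s_1$.

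So the blow-up of $\sigma$ as $s_1\nearrow R^n$ is what makes the argument work, not an obstacle. A flatter $w(\cdot,t_0)$ near $R^n$ is closer to $\f m{\omega_n}$, hence easier to dominate, not harder.
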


\begin{bew}
Abbreviating $M:=\inf_{s\in(0,R^n)}w(s,t_0)$ and assuming without loss of generality that $\eta \in (0,M)$, we let $M_0:=M-\eta \in (0, \f{m}{\omega_n})$ and $M_1:=M-\f\eta 2 \in (M_0,\f m{\omega_n})$, and for $s_0\in(0,R^n)$, we may rely on the continuity of $w_s(\cdot,t_0)$ in $[s_0,R^n]$ to fix $c_1>0$ such that $w_s(\cdot,t_0)\leq c_1$ for all $s\in[s_0,R^n]$. Using that $M_1 < \f m{\omega_n}$, we may thereupon pick $s_1 \in(s_0,R^n)$ suitably close to $R^n$ such that
\begin{align}\label{s1closetoRn2}
R^n-s_1 \leq \f{\f m{\omega_n}-M_1}{c_1},
\end{align}
and let $\uW:=\uW^{(s_0,s_1,M_0,M_1)}$ be as provided by Lemma \ref{staticsubsolacc}.\\
Then since $\uW_s \geq 0$, the second property in \eqref{s0M0s1M1} warrants that for all $s\in(0,s_1]$ we have
\begin{align}\label{uWestsmalls2}
\uW(s) \leq \uW(s_1) \leq M_1 < M = \inf_{\sigma \in (0,R^n)} w(\sigma,t_0) \leq w(s,t_0).
\end{align}
For larger $s$, we firstly make use of our choice of $c_1$ to estimate
\begin{align}\label{westlinearsub}
w(s,t_0)&\geq w(R^n,t_0)-\|w_s(\cdot,t_0)\|_{L^\infty((s_0,R^n))}\cdot (R^n-s) \notag\\
&\geq \f m{\omega_n} - c_1\cdot (R^n-s) \qquad \text{for all }s \in (s_0,R^n],
\end{align}
whereas from the convexity of $\uW$ on $(s_1,R^n]$, as asserted by Lemma \ref{staticsubsolacc}, as well as from \eqref{s1closetoRn2} and \eqref{s0M0s1M1}, we obtain that
\begin{align*}
\uW(s) &\leq \uW(R^n) - \f{\uW(R^n)-\uW(s_1)}{R^n-s_1}\cdot(R^n-s)\\
&= \f{m}{\omega_n} - \f{\f{m}{\omega_n}-\uW(s_1)}{R^n-s_1}\cdot(R^n-s)\\
&\leq \f{m}{\omega_n} - \f{\f{m}{\omega_n}-M_1}{R^n-s_1}\cdot(R^n-s)\\
&\leq \f{m}{\omega_n} - c_1\cdot(R^n-s) \qquad \text{for all }s\in(s_1,R^n].
\end{align*}
Together with \eqref{uWestsmalls2} and \eqref{westlinearsub}, this shows that $\uW(s)\leq w(s,t_0)$ for all $s\in(0,R^n]$, so that the comparison principle from Lemma \ref{halfcomparison} asserts that if in accordance with Lemma \ref{createdsol} we let
\[
\uw(s,t):=\lim_{\eps \to 0} \weps (s,t-t_0), \qquad (s,t)\in(0,R^n]\times[t_0,\infty),
\]
with $\weps$ denoting solutions of \eqref{0wepsacc} emanating from $\weps(s,0):=\min \{\f s\eps, \uW(s)\},~s\in[0,R^n],~\eps \in(0,\f{\omega_n R^n}{m})$, then indeed \eqref{uwisasubsolacc} is valid.\\
Due to our selection of $\uW$, \eqref{wtgeq0acc} precisely states that $\uw$ moreover satisfies \eqref{uwtgeq02acc}, while the monotonicity property $\uw_s\geq 0$ is obvious by Lemma \ref{createdsol}.\\
Finally, \eqref{wgequWacc} along with the left inequality in \eqref{s0M0s1M1} and our definition of $M_0$ ensures that
\[
\uw(s_0,t)\geq \uW(s_0) \geq M_0 = M-\eta
\]
and that hence also \eqref{uws0geqM0acc} holds.
\end{bew}

This shows that $w$ can be bounded from below by its infimum at a given time, thereby confirming the desired monotonicity property.

\begin{lemma}\label{infmoninclemacc}
Let $n\geq 3$, $R>0$, $m>0$, $\mu := \f{mn}{\omega_n R^n}$, and suppose that $w_0$ satisfies \eqref{initacc}. Furthermore, assume that for some $T>0$, $w\in C^0((0,R^n]\times[0,T))\cap C^{2,1}((0,R^n)\times(0,T))$ is a nonnegative classical solution of \eqref{0wacc} in $(0,R^n)\times(0,T)$ such that $w_s\geq 0$ in $(0,R^n)\times(0,T)$. Then
\begin{align}\label{massmonotonacc}
w(s,t) \geq \inf_{\sigma\in(0,R^n)} w(\sigma,t_0) \qquad \text{for all }t_0\in[0,T)~\text{and any}~(s,t)\in(0,R^n]\times [t_0,T).
\end{align}
\end{lemma}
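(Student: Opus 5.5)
Fix $t_0\in[0,T)$ and abbreviate $M:=\inf_{\sigma\in(0,R^n)} w(\sigma,t_0)$. If $M=0$, the claim \eqref{massmonotonacc} is immediate from the nonnegativity of $w$. We may therefore assume $M>0$. Since $w_s\ge 0$ and $w(R^n,t_0)=\f m{\omega_n}$, we also have $M\le \f m{\omega_n}$.

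\textbf{Main idea.} We use Lemma \ref{risingsubsolsol} to bound $w$ from below by a solution that rises in time. Fix $s_0\in(0,R^n)$ and $\eta>0$. Lemma \ref{risingsubsolsol} supplies a solution $\uw$ with the following properties:
\begin{itemize}
\item $\uw\le w$ on $(0,R^n]\times[t_0,T)$;
\item $\uw_s\ge 0$;
\item $\uw_t\ge 0$;
\item $\uw(s_0,t)\ge M-\eta$ for all $t\ge t_0$.
\end{itemize}
The hypothesis of that lemma allows $t_0\in[0,T)$, and $M>0$ is exactly its positivity assumption. The regularity class stated there uses $C^{2,1}((0,R^n]\times(0,T))$, whereas here we only have $C^{2,1}((0,R^n)\times(0,T))$. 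This gap is harmless, because the proof of Lemma \ref{risingsubsolsol} only uses continuity of $w_s(\cdot,t_0)$ on $[s_0,R^n]$. For $t_0>0$ this follows from interior regularity up to the Dirichlet boundary $s=R^n$ by standard parabolic theory. For $t_0=0$ it follows from $w_0\in W^{1,\infty}_{loc}((0,R^n])$, where the Lipschitz bound is exactly what enters \eqref{westlinearsub}.

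\textbf{Deducing the bound.} For any $s\in[s_0,R^n]$ and $t\in[t_0,T)$, monotonicity of $\uw$ in $s$ gives
\[
w(s,t)\ge \uw(s,t)\ge \uw(s_0,t)\ge M-\eta .
\]
Now let $\eta\searrow 0$ to get $w(s,t)\ge M$ on $[s_0,R^n]\times[t_0,T)$. Since $s_0\in(0,R^n)$ was arbitrary, letting $s_0\searrow0$ yields \eqref{massmonotonacc} on all of $(0,R^n]\times[t_0,T)$.

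Note that the time-monotonicity $\uw_t\ge 0$ is not needed here. It is already encoded in the bound $\uw(s_0,t)\ge M-\eta$, which holds for all $t\ge t_0$.

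\textbf{Main obstacle.} The only delicate point is the applicability of Lemma \ref{risingsubsolsol}, in particular the $t_0=0$ case. There one needs $\uW\le w_0$ near $s=R^n$, which uses a Lipschitz bound of $w_0$ on $[s_0,R^n]$; this is guaranteed by \eqref{initacc}. Everything else is a direct limiting argument.
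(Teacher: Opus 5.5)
Your proposal is correct and follows essentially the same route as the paper: reduce to $M>0$, invoke Lemma \ref{risingsubsolsol} for arbitrary $s_0$ and $\eta$, combine $\uw\le w$ with $\uw(s_0,t)\ge M-\eta$, and let $\eta\searrow 0$. The paper evaluates directly at $s=s_0$ and uses that $s_0$ is arbitrary, so your appeal to $\uw_s\ge 0$ is unnecessary but harmless. Your remarks on the regularity mismatch and on the $t_0=0$ case (where only the Lipschitz bound from $w_0\in W^{1,\infty}_{loc}((0,R^n])$ is available) correctly fill in details the paper leaves implicit.
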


\begin{bew}
It is clearly sufficient to restrict ourselves to considering the case when $\inf_{\sigma\in(0,R^n)} w(\sigma,t_0)$ is positive. We fix $s_0\in(0,R^n)$ and $\eta>0$, and thereupon invoke Lemma \ref{risingsubsolsol} to find $\uw\in C^0((0,R^n]\times[t_0,\infty))\cap C^{2,1}((0,R^n]\times(t_0,\infty))$ satisfying \eqref{uwisasubsolacc} and \eqref{uws0geqM0acc}. These two properties yield
\[
w(s_0,t) \geq \uw(s_0,t) \geq \inf_{s\in(0,R^n)}w(s,t_0)-\eta \qquad \text{for all }t\in[t_0,T),
\]
and thereby entail \eqref{massmonotonacc} due to the fact that $\eta>0$ and $s_0\in(0,R^n)$ had been chosen arbitrarily.

\end{bew}

\subsection{Strict monotonicity}

Building upon the established monotonicity, we now go on to show that $w(0,\cdot)$ is in fact strictly increasing.
To that end, we shall construct a suitable subsolution. As preparation, by means of elementary calculus we first establish

\begin{lemma}\label{uvpacclemma}
Let $\eta>0$, $a\in\R$, $b>a$, $a_0 \in \R$ and $\vp \in C^0([a,b])\cap C^1((a,b])$ be such that $\vp(a)=a_0$ and $\vp'(\xi)>0$ for all $\xi\in(a,b]$.\\
Then there exists $\uvp \in C^1([a,b])\cap C^2((a,b])$ such that $\uvp(a)=a_0$, $a_0<\uvp(\xi)<\vp(\xi)$ for all $\xi\in(a,b]$ and $\uvp'(\xi)>0$ as well as $\uvp''(\xi)>0$ for all $\xi\in(a,b]$.\\
Furthermore, this function satisfies $\uvp(b) \leq a_0 + \eta$.
\end{lemma}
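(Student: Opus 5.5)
We plan to normalize and then take $\uvp$ as a scaled primitive of $\vp - a_0$. The key point is that the primitive of a strictly increasing function is strictly convex. Its derivative is then the (increasing) function itself, so positivity of $\uvp'$ and $\uvp''$ is inherited directly from the hypotheses $\vp(a)=a_0$ and $\vp'>0$.

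Set $\psi(\xi) := \vp(\xi) - a_0$ for $\xi \in [a,b]$. Then $\psi \in C^0([a,b]) \cap C^1((a,b])$, $\psi(a) = 0$ and $\psi' > 0$ on $(a,b]$. Hence $\psi$ is strictly increasing on $[a,b]$, and in particular $\psi > 0$ on $(a,b]$. For a constant $c \in (0,1)$ to be fixed later, we define
\begin{align*}
\uvp(\xi) := a_0 + c \int_a^\xi \psi(\sigma)\, d\sigma, \qquad \xi \in [a,b].
\end{align*}

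The regularity and sign properties follow directly. Since $\psi$ is continuous on $[a,b]$, we get $\uvp \in C^1([a,b])$ with $\uvp' = c\psi$. Since $\psi \in C^1((a,b])$, we get $\uvp \in C^2((a,b])$ with $\uvp'' = c\psi'$. Consequently $\uvp' = c\psi > 0$ and $\uvp'' = c\psi' > 0$ on $(a,b]$, and clearly $\uvp(a) = a_0$. Also $\uvp(\xi) > a_0$ for $\xi \in (a,b]$, because the integrand is positive on $(a,\xi]$.

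The only step that needs a little care is the strict inequality $\uvp < \vp$ on $(a,b]$. Strict monotonicity of $\psi$ gives $\psi(\sigma) < \psi(\xi)$ for $\sigma \in [a,\xi)$, and therefore
\begin{align*}
\int_a^\xi \psi(\sigma)\, d\sigma < (\xi - a)\,\psi(\xi) \le (b - a)\,\psi(\xi) \qquad \text{for } \xi \in (a,b].
\end{align*}
So any $c \le \frac{1}{b-a}$, together with $c < 1$, gives $\uvp(\xi) - a_0 < \psi(\xi)$, which is exactly $\uvp(\xi) < \vp(\xi)$.

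Finally, we shrink $c$ further so that $c \int_a^b \psi \le \eta$, which ensures $\uvp(b) \le a_0 + \eta$. For instance,
\begin{align*}
c := \min\Big\{ \tfrac12,\ \tfrac{1}{b-a},\ \tfrac{\eta}{(b-a)\psi(b)} \Big\}
\end{align*}
satisfies all three requirements, since $\int_a^b \psi \le (b-a)\psi(b)$. No step here is a genuine obstacle; the only point needing care is the strictness argument above, which rests on $\psi$ being strictly increasing.
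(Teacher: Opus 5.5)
Your proof is correct, and it takes a simpler route than the paper.

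The paper first builds a function $\phi\in C^1((a,b])$ with $0<\phi<\vp'$, $\phi'>0$ and $\phi\le\frac{\eta}{b-a}$. It does this by gluing auxiliary monotone $C^1$ pieces along a sequence $\xi_j\searrow a$, with thresholds $\eta_j\le\frac12\min_{[\xi_j,b]}\vp'$. It then sets $\uvp=a_0+\int_a^\xi\phi$, so $\uvp<\vp$ comes from comparing derivatives and integrating from $a$.

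You instead take the integrand to be $c(\vp-a_0)$ itself. This function is already positive, strictly increasing and $C^1$ on $(a,b]$, so $\uvp'>0$ and $\uvp''=c\vp'>0$ come for free. You get $\uvp<\vp$ directly at the level of function values, from the fact that the mean of a strictly increasing function over $[a,\xi]$ lies strictly below its value at $\xi$.

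Your route avoids three things in the paper's argument:
\begin{itemize}
\item the sequence construction;
\item the existence of the four-parameter gluing functions $\chi$, which the paper only sketches;
\item the implicit use of $\vp(\xi)-a_0=\int_a^\xi\vp'$ as an improper integral at $a$.
\end{itemize}

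In return, the paper's construction gives the stronger derivative-level bound $\uvp'<\vp'$, and a $\uvp$ whose smoothness does not depend on that of $\vp$. Neither feature is needed later. Lemma \ref{strictmonotonacclemma} and Lemma \ref{ovpacclemma} use only the properties listed in the statement, so your version could replace the paper's proof as is.

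One minor point: the restriction $c\le\frac12$ (your ``$c<1$'') is unnecessary. The first inequality $\int_a^\xi\psi<(\xi-a)\psi(\xi)$ in your chain is already strict, so $c\le\frac1{b-a}$ alone suffices.
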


\begin{bew}
Fixing a strictly decreasing sequence $(\xi_j)_{j\in\N}\subset(a,b]$ such that $\xi_1=b$ and $\xi_j \searrow a$ as $j\to\infty$, we recursively define
\begin{align}\label{eta1defacc}
\eta_1:=\min \Big\{\f{\vp'(b)}2, \f \eta{b-a} \Big\}
\end{align}
and
\begin{align}\label{etajdefacc}
\eta_j := \min \Big\{\f12 \eta_{j-1}, \min_{\xi \in [\xi_j,b]} \f{\vp'(\xi)}2 \Big\} \qquad \text{for }j\geq 2.
\end{align}
Then by positivity of $\vp'$ on $(a,b]$, it follows that for all $j \geq 1$ we have $\eta_j>0,~\eta_{j+1}<\eta_j$ and $\vp'(\xi) > \eta_j$ for all $\xi\in[\xi_j,b]$, and that moreover $\eta_j \searrow 0$ as $j\to \infty$.\\
For our further procedure, we note that for any choice of real numbers $A,B,\ueta$ and $\oeta$ fulfilling $A<B$ and $\ueta < \oeta$, by means of an elementary construction it is possible to find $\chi \equiv \chi_{(A,B,\ueta,\oeta)} \in C^1([A,B])$ such that $\chi(A)=\ueta$, $\chi(B)=\oeta$, $\chi'(A)=\chi'(B)=1$ and $\chi'>0$ on $[A,B]$.\\
Utilizing this four-parameter formulation, we introduce a function $\phi$ on $(a,b]$ by letting
\begin{align}\label{phidefuvpacc}
\phi(\xi):=\chi_{(\xi_{j+1},\xi_j,\eta_{j+2},\eta_{j+1})}(\xi) \qquad \text{if }\eta\in(\xi_{j+1},\xi_j]~\text{for some }j\geq 1.
\end{align}
It can easily be verified that through this indeed a function $\phi\in C^1((a,b])$ is well-defined, and that both $\phi$ and $\phi'$ are positive on $(a,b]$ with $\phi(\xi) \searrow 0$ as $\xi \searrow a$, so that setting $\phi(a):=0$ we can achieve that moreover $\phi\in C^0([a,b])$.\\
Apart from that, whenever $j\in\N$ and $\xi\in (\xi_{j+1},\xi_j]$, according to the monotonicity of $\chi_{(\xi_{j+1},\xi_j,\eta_{j+2},\eta_{j+1})}$ and our choice of $\eta_{j+1}$ we have
\[
\phi(\xi)=\chi_{(\xi_{j+1},\xi_j,\eta_{j+2},\eta_{j+1})}(\xi) \leq \chi_{(\xi_{j+1},\xi_j,\eta_{j+2},\eta_{j+1})}(\xi_j)=\eta_{j+1} \leq \min_{\tilde \xi \in [\xi_{j+1},b]} \f{\vp'(\tilde \xi)}2 < \vp'(\xi)
\]
and hence $\phi < \vp'$ on $(a,b]$.\\
Therefore,
\begin{align}\label{uvpdefacca0}
\uvp(\xi):= a_0 + \int_a^\xi \phi(\tilde \xi) d \tilde \xi, \qquad \xi\in[a,b],
\end{align}
belongs to $C^1([a,b])\cap C^2((a,b])$ with $\uvp(a)=a_0$ and clearly satisfies $\uvp < \vp$ on $(a,b]$ as well as $\uvp''=\phi'>0$ throughout $(a,b]$. Lastly, we may verify that by \eqref{uvpdefacca0}, \eqref{phidefuvpacc}, \eqref{etajdefacc} and \eqref{eta1defacc},
\begin{align*}
\uvp(b) &= a_0 + \int_a^b \phi(\tilde \xi) d \tilde \xi \\
&\leq a_0 + \int_a^b \eta_1 d \tilde \xi \\
&= a_0 + (b-a) \eta_1 \\
&\leq a_0 + \eta,
\end{align*}
thus concluding the proof.
\end{bew}

We can suitably encapsulate transport to create a function which lies at the heart of not only Lemma \ref{strictmonotonacclemma} but also Lemma \ref{massaccumulatestotallyacc}. Great care is needed in the construction to ensure that \eqref{psidiffineqacc} still holds near points where $\phi$ enters into zero.

\begin{lemma}\label{functionpsidiffineqpluginfct}
Let $n\geq 3$, $R>0$, $\mu>0$, $a_0>0$, $t_0>0$ and $\tau>0$. Then there exist $\tau_0 \in (0,\tau]$ and $s_*=s_*(n,\mu,a_0,\tau) \in(0,\f{R^n}2)$ such that for any $s_0 \in (0,s_*]$, there are discrete sets $N(t)\subset [0,s_0]$ for each $t\in[t_0,t_0+\tau_0)$ and a function $\psi \in C^1([0,s_0]\times[t_0,t_0+\tau_0))\cap C^2(\mathcal G)$, where
\begin{align}\label{calGdefacc}
\mathcal G:=\{(s,t)\in[0,s_0]\times[t_0,t_0+\tau_0):~s \notin N(t)\},
\end{align}
such that $\psi$ has the properties that 
\begin{align}\label{psisislocallyboundedacc}
0 \leq \psi_s \in L^\infty_{loc}([0,s_0]\times[t_0,t_0+\tau_0)),
\end{align}
that 
\begin{align}\label{psi0at0leqs0ats0acc}
\psi(0,t)=0 \quad \text{and }\quad \psi(s_0,t)\leq s_0 \qquad \text{for all }t\in[t_0,t_0+\tau_0),
\end{align}
that furthermore
\begin{align}\label{psiissmallersinitiallyacc}
\psi(s,0) \leq s \qquad \text{for all }s\in[0,s_0],
\end{align}
that the differential inequality
\begin{align}\label{psidiffineqacc}
\psi_t - n^2 s^{2-\f2n} \psi_{ss} - (n a_0 - \mu s)\psi_s \leq 0
\end{align}
holds in $\mathcal G$, and that moreover for each $s\in(0,s_0]$,
\begin{align}\label{limpsigeqs02acc}
\lim_{t \nearrow t_0 + \tau_0} \psi(s,t) \geq \f{s_0}2.
\end{align}
\end{lemma}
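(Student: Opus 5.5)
The plan is to build $\psi$ as an explicit transport-dominated subsolution, exploiting that the drift $na_0-\mu s$ is strictly positive near $s=0$. First I would choose $s_*\le \f{na_0}{2\mu}$, so that $na_0-\mu s\ge \f{na_0}2=:2b$ on $[0,s_*]$. The inequality \eqref{psidiffineqacc} is then implied by
\[
\psi_t-n^2s^{2-\f2n}\psi_{ss}-2b\,\psi_s\le 0 \qquad\text{wherever }\psi_s\ge 0.
\]
Thanks to $n\ge3$, the diffusion coefficient satisfies $n^2s^{2-\f2n}=n^2 s\cdot s^{1-\f2n}$. Concave contributions of the form $s^{2-\f2n}|\psi_{ss}|\lesssim s^{1-\f2n}\psi_s$ are therefore absorbed by the drift once $s_0\le s_*$ is small. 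This smallness, together with a requirement $s_*\lesssim b\tau$ coming from the time budget, is what fixes $s_*=s_*(n,\mu,a_0,\tau)$.

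Next I would use the ansatz
\[
\psi(s,t)=g\big(s+b(t-t_0)\big)\cdot h\Big(\f{s}{\ell(t)}\Big),
\]
with the following ingredients.
\begin{itemize}
\item The outer profile is $g(x)=s_0(1-e^{-x/s_0})$. It satisfies $g(x)\le x$, $g\le s_0$ and $g'>0$, and the ratio $|g''|/g'=1/s_0$ is bounded, so that $n^2s^{2-\f2n}|g''|\le n^2s_0^{1-\f2n}g'\le b\,g'$ for small $s_0$.
\item The boundary-layer factor $h$ is increasing, with $h(0)=0$ and $h\le 1$, and it is chosen so that $h(\xi)\to1$ as $\xi\to\infty$.
\item The width $\ell(t)\searrow0$ vanishes exactly as $t\nearrow t_0+\tau_0$. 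I would take $\tau_0$ with $b\tau_0$ a fixed multiple of $s_0$ (hence $\tau_0\le\tau$ because $s_0\le s_*$), so that $g(b\tau_0)>\f{s_0}2$.
\end{itemize}
This ansatz gives the boundary properties directly. Since $h(0)=0$ we get $\psi(0,t)=0$, and $\psi(s_0,t)\le g\le s_0$ gives \eqref{psi0at0leqs0ats0acc}. From $h\le1$ and $g(x)\le x$ we get $\psi(\cdot,t_0)\le s$, which is \eqref{psiissmallersinitiallyacc}. For fixed $s>0$ we have $h(s/\ell)\to1$, which yields \eqref{limpsigeqs02acc}. Finally $\psi_s\ge0$, and $\psi_s$ is locally bounded on $[t_0,t_0+\tau_0)$ because $\ell>0$ there.

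The verification of the differential inequality then splits into terms.
\begin{itemize}
\item The $g$-transport part $b g'h$ of $\psi_t$ is dominated by the matching drift term $2b\,g'h$.
\item The cross term $2g'h'/\ell$ in $\psi_{ss}$ has a good sign.
\item The $g''h$ term is handled by the bounded ratio $|g''|/g'$ above.
\item The layer-motion term $g\,h'\,\xi|\ell'|/\ell$ in $\psi_t$ must be dominated by the drift $2b\,g\,h'/\ell$, which requires $\xi|\ell'|\le b$ on the support of $h'$.
\item The layer-concavity term $n^2s^{2-\f2n}g|h''|/\ell^2$ must likewise be absorbed.
\end{itemize}

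I expect these last two requirements to conflict, and this is the main obstacle. If $h'$ has unbounded support, for instance $h(\xi)=\f{\xi}{1+\xi}$ where $|h''|/h'$ is bounded, then $\xi|\ell'|\le b$ forces $|\ell'|\lesssim\ell$. In that case $\ell$ decays only exponentially and never reaches $0$ in finite time. If instead $h\equiv1$ for $\xi\ge2$, so that the motion term is harmless, then near the point where $h'$ enters zero we have $h''<0$ while $h'\to0$. There the concave diffusion term is of size $\ell^{-\f2n}g|h''|$ and has nothing to absorb it. This is precisely the delicate point announced before the lemma.

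My first attempt at resolving this would abandon smoothness of $h$ at its cap. I would use Lemma \ref{uvpacclemma} to produce a profile that is strictly convex ($\uvp''>0$) on the steep part, and glue it to the flat part only at an isolated point. These gluing points form the discrete sets $N(t)$ permitted in \eqref{calGdefacc}; there $\psi$ remains $C^1$ but is only $C^2$ on $\mathcal G$. Convexity turns the diffusion term into a helpful one, so the only term left to absorb is the transport term, which is where the restriction $\xi|\ell'|\le b$ is used. To keep $\psi\in C^1$ at the gluing point $\xi_1$, the slopes must match there, so I would require $h'(\xi_1)=0$. This forces some $h''<0$ region before $\xi_1$, and that region has to be controlled too, since convexity alone cannot close the argument. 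For that I would allow a slowly decreasing amplitude factor $A(t)$, with $A(t_0)=1$ and $A\ge\f34$ throughout, multiplying $g$. Its derivative supplies a negative contribution $A'\,\psi/A$ to $\psi_t$ on exactly the region where $h'$ is small. I would try to balance $|A'|$ against $\ell^{-\f2n}\sup|h''|$ over the short interval $[t_0,t_0+\tau_0)$. This balance is the step I would check first, and the one I am least sure of.
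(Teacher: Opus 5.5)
Your reduction is correct, and so is the bookkeeping for $g$, the cross term and the layer motion. However, the proposal stops exactly where the lemma's difficulty lies, so as written it has a gap. On the concave part of the cap, where $h''<0$ and $h'\to0$, you never establish \eqref{psidiffineqacc}; you only state a hoped-for balance between $A'$ and $\ell^{-\frac2n}$.

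The gap can be closed along your lines:
\begin{itemize}
\item Since $h'$ is supported in $[0,\xi_1]$, your motion constraint $\xi_1|\ell'|\le b$ allows a linear width $\ell(t)=\frac{b}{\xi_1}(t_0+\tau_0-t)$.
\item The only new term is $A'gh\le 0$. It must dominate $n^2s^{2-\frac2n}Ag|h''|\ell^{-2}=n^2\xi^{2-\frac2n}\ell^{-\frac2n}Ag|h''|$.
\item Because the good term carries the factor $h$, the relevant constant is $K:=n^2\sup\xi^{2-\frac2n}|h''|/h$ over the concave region, not $\sup|h''|$.
\item For instance, $h(\xi)=1-(1-\xi/\xi_1)_+^2$ gives $K\le 2n^2\xi_1^{-\frac2n}$. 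With this choice you need neither a convex part nor Lemma \ref{uvpacclemma}.
\item Take $A(t)=\exp\bigl(-K\int_{t_0}^{t}\ell(\sigma)^{-\frac2n}\,d\sigma\bigr)$. Then $A\ge\exp\bigl(-K(\xi_1/b)^{\frac2n}\tau_0^{1-\frac2n}/(1-\frac2n)\bigr)$. This bound is finite precisely because $n\ge3$, and it is close to $1$ when $\tau_0\sim s_0/b$ with $s_0$ small.
\item Finally, enlarge the multiple $b\tau_0/s_0$ so that $A(t_0+\tau_0)\,g(b\tau_0)\ge s_0/2$. All your other estimates are linear in $A$ and are unaffected.
\end{itemize}

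The paper avoids this balancing altogether. It takes $\psi=\frac{s_0}{2}\,\frac{s}{s+\phi(s+A(t-t_0))}+\frac{s}{2}$ with $\phi(y)=(2\sqrt{s_0}-y)_+^2$, $A=\frac{na_0}{4}$ and $\tau_0=2\sqrt{s_0}/A$. The first summand is your rejected profile $h(\xi)=\frac{\xi}{1+\xi}$, but the width is evaluated at the traveling coordinate instead of depending on $t$ alone. Hence $\psi_t=-\frac{s_0}{2}\frac{As\phi'}{(s+\phi)^2}\le A\psi_s$ identically. The layer motion is pure transport, so the restriction $|\ell'|\lesssim\ell$ never appears, while $\phi(s+A(t-t_0))\to0$ for every $s$ as $t\nearrow t_0+\tau_0$. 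The flattening point $y=2\sqrt{s_0}$ sits at the fixed curvature $\phi''=2$ rather than at the collapsing scale $\ell$. The concave error there is therefore bounded uniformly in time by $n^2s^{1-\frac2n}s_0\le n^2s_0^{2-\frac2n}$. It is absorbed by the floor $\psi_s\ge\frac12$ supplied by the additive linear term $\frac{s}{2}$, which is itself a subsolution. In the paper, $n\ge3$ enters through $s^{1-\frac2n}\to0$ rather than through integrability of $\ell^{-\frac2n}$. Your route, once completed, is a legitimate alternative; the paper's gains a single explicit formula with no time-dependent amplitude to tune.
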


\begin{bew}
We first set
\begin{align}\label{Adefstrictmonincacc}
A:= \f{n a_0}4
\end{align}
and, abbreviating $\alpha:=2-\f2n$,
\begin{align}\label{s0defstrictmonincacc}
s_*:=\min\Big\{\f{R^n}2, \f{na_0}{4\mu}, \Big(\f{a_0}{8n}\Big)^\f1\alpha, \Big(\f{a_0}{8n}\Big)^\f1{\alpha-1}, 1, \f{A^2\tau^2}4\Big\}.
\end{align}

Choosing any $s_0 \in (0,s_*]$, we then define $\phi \in C^1([0,\infty))\cap C^2([0,\infty)\setminus \{y \in [0,\infty)~\vert~y=2\sqrt{s_0}\})$ via
\begin{align*}
\phi(y):= \begin{cases}
(2\sqrt{s_0}-y)^2, \qquad &\text{if }2\sqrt{s_0} \geq y,\\
0, \qquad &\text{otherwise},
\end{cases}
\end{align*}
and thereupon construct $\psi \in C^1([0,s_0]\times[t_0,t_0+\tau_0))\cap C^2(\mathcal G)$ as
\begin{align*}
\psi(s,t):=\f{s_0}2 \cdot \f{s}{s+\phi(s+A(t-t_0))}+\f s2, \qquad (s,t)\in [0,s_0]\times[t_0,t_0+\tau_0),
\end{align*}
with $\tau_0=\f{2\sqrt{s_0}}A$ and $N(t)=\{s \in [0,s_0]~\vert~s=2\sqrt{s_0}-A(t-t_0)\}$ for $t\in[t_0,t_0+\tau_0)$, and $\mathcal G$ as in \eqref{calGdefacc}.\\
Next, we calculate
\begin{align}\label{psitsstrictmonincacc}
\psi_t (s,t) = \f{s_0}2 \cdot \f{-As \phi'}{(s+\phi)^2} \quad \text{as well as } \quad \psi_s = \f{s_0}2\cdot \f{\phi - s \phi'}{(s+\phi)^2}+\f12 \qquad s\in[0,s_0],~t\in[t_0,t_0+\tau_0),
\end{align}
as well as
\begin{align*}
\psi_{ss}= \f{s_0}2 \Big(\f{-s\phi''}{(s+\phi)^2} - \f{2(\phi-s\phi')(1+\phi')}{(s+\phi)^3}\Big), \qquad (s,t)\in \mathcal G,
\end{align*}
where $\phi$ and its derivatives are always to be evaluated at $s+A(t-t_0)$.\\
As furthermore,
\begin{align*}
\phi'(y)= \begin{cases}
-2(2\sqrt{s_0}-y), \qquad &\text{if }2\sqrt{s_0} \geq y,\\
0, \qquad &\text{otherwise},
\end{cases}
\end{align*}
and
\begin{align*}
\phi''(y)= \begin{cases}
2, \qquad &\text{if }2\sqrt{s_0} > y,\\
0, \qquad &\text{if }2\sqrt{s_0} < y,
\end{cases}
\end{align*}
we may observe that $\phi'$ is nonpositive and thus $\psi_s$ is nonnegative.\\
In order to obtain the differential inequality \eqref{psidiffineqacc}, due to the second restriction in \eqref{s0defstrictmonincacc}, we may estimate
\begin{align}\label{psisfactorestacc}
n a_0 - \mu s \geq \f34 n a_0 \qquad \text{for all }s\in[0,s_0],
\end{align}
and further utilizing \eqref{psitsstrictmonincacc}, the fact that $\phi'\leq 0$, and \eqref{Adefstrictmonincacc}, we receive the inequality
\begin{align}\label{psitleqpsisacc}
\psi_t &= \f{s_0}2 \cdot \f{-As \phi'}{(s+\phi)^2} \notag\\
&\leq \f{n a_0}4\cdot \f{s_0}2\cdot\f{-s \phi'}{(s+\phi)^2} \notag\\
&\leq \f{n a_0}4 \psi_s \qquad \text{in }[0,s_0]\times[t_0,t_0+\tau_0).
\end{align}
Moreover, since the third and fourth restriction, respectively, in \eqref{s0defstrictmonincacc} guarantee that
\begin{align*}
n^2 s^\alpha \cdot \f{s_0}2 \cdot \f{s \phi''}{(s+\phi)^2} &\leq \f{n^2 s^{\alpha+1} \cdot s_0}{(s+\phi)^2}\\
&\leq n^2 s^{\alpha-1} \cdot s_0\\
&\leq n^2 s_0^\alpha \\
&\leq \f{na_0}8 \\
&\leq \f{na_0}4 \psi_s \qquad \text{for all }t\in[t_0,t_0+\tau_0)~\text{and}~s\in [0,s_0]\setminus N(t)
\end{align*}
and, again recalling that $\phi'$ is nonpositive and that $\alpha-1=1-\f2n >0$ due to $n\geq 3$,
\begin{align*}
n^2 s^\alpha s_0 \cdot \f{(\phi-s\phi')(1+\phi')}{(s+\phi)^3} &\leq n^2 s^{\alpha-1} s_0 \cdot \f{\phi-s\phi'}{(s+\phi)^2}\\
&\leq 2n^2 s_0^{\alpha-1} \cdot \f{s_0}2 \cdot \f{\phi-s\phi'}{(s+\phi)^2}\\
&\leq \f{na_0}4 \cdot \f{s_0}2 \cdot \f{\phi-s\phi'}{(s+\phi)^2}\\
&\leq \f{na_0}4 \psi_s \qquad \text{for all }t\in[t_0,t_0+\tau_0)~\text{and}~s\in [0,s_0].
\end{align*}
Consequently, we obtain
\begin{align}\label{psissleqpsisacc}
-n^2 s^\alpha \psi_{ss} &= n^2 s^\alpha \cdot \f{s_0}2 \cdot \f{s \phi''}{(s+\phi)^2} + n^2 s^\alpha s_0 \cdot \f{(\phi-s\phi')(1+\phi')}{(s+\phi)^3} \notag\\
& \leq \f{na_0}2 \psi_s \qquad \text{for all }t\in[t_0,t_0+\tau_0)~\text{and}~s\in [0,s_0]\setminus N(t),
\end{align}
whence combining \eqref{psisfactorestacc}, \eqref{psitleqpsisacc} and \eqref{psissleqpsisacc}, we receive
\begin{align*}
\psi_t - n^2 s^\alpha \psi_{ss} - (n a_0 - \mu s)\psi_s &\leq \f{n a_0}4 \psi_s + \f{n a_0}2 \psi_s - \f{3n a_0}4 \psi_s  \notag\\
&= 0 \qquad \text{for all }t\in[t_0,t_0+\tau_0)~\text{and}~s\in [0,s_0]\setminus N(t),
\end{align*}
thus establishing \eqref{psidiffineqacc}. Now for any $\tau_1 \in(0,\tau_0)$, we have
\begin{align*}
 \psi_s &= \f{s_0}2\cdot \f{\phi - s \phi'}{(s+\phi)^2}+\f12\\
&= \f{s_0}2\cdot \f{\phi - s \phi'}{(s+((2\sqrt{s_0}-s-A(t-t_0))_+)^2)^2}+\f12\\
&\leq \f{s_0}2\cdot \f{\phi - s \phi'}{(s+((2\sqrt{s_0}-s-A\tau_1)_+)^2)^2}+\f12\\
&= \f{s_0}2\cdot \f{\phi - s \phi'}{(s+((A(\tau_0-\tau_1)-s)_+)^2)^2}+\f12\\
&\leq \f{s_0}2\cdot \f{\phi - s \phi'}{c_0^2}+\f12 \qquad \text{in }[0,s_0]\times[t_0,t_0+\tau_1]
\end{align*}
with $c_0:=\min\{A^2(\tau_0-\tau_1)^2, A(\tau_0-\tau_1) \}$, where the last inequality has been established by means of elementary calculus. Therefore, $\psi_s$ is bounded locally in $[0,s_0]\times[0,\tau_0)$, validating \eqref{psisislocallyboundedacc}. Similarly, we verify
\begin{align*}
\psi(0,t)=0 \qquad \text{for all }t\in[t_0,t_0+\tau_0),
\end{align*}
and as a result of the nonnegativity of $\phi$, we may infer that
\begin{align*}
\psi(s_0,t) &= \f{s_0}2 \cdot \f{s_0}{s_0+\phi(s+A(t-t_0))}+\f{s_0}2\notag\\
&\leq s_0 \qquad \text{for all }t\in[t_0,t_0+\tau_0),
\end{align*}
confirming \eqref{psi0at0leqs0ats0acc}.
Moreover, in light of $s_0\leq 1$, it follows that
\begin{align*}
\psi(s,t_0) &=\f{s_0}2 \cdot \f{s}{s+(2\sqrt{s_0}-s)^2}+\f{s}2 \notag\\
&\leq \f{s_0}2 \cdot \f{s}{s+s_0}+\f{s}2 \notag\\
&\leq s \qquad \text{for all }s\in(0,s_0],
\end{align*}
that is \eqref{psiissmallersinitiallyacc}.
Lastly, in view of 
\[
\phi(s+A(t-t_0)) \searrow 0 \qquad \text{as }t\nearrow t_0+\tau_0~~ \text{for each }s\in(0,s_0],
\]
we may easily deduce that
\begin{align*}
\lim_{t \nearrow t_0 + \tau_0} \psi(s,t) &= \lim_{t \nearrow t_0 + \tau_0} \f{s_0}2 \cdot \f{s}{s+\phi(s+A(t-t_0))}+\f s2 \\
&= \f{s_0}2 + \f s2 \\
&\geq \f{s_0}2 \qquad \text{for all }s\in(0,s_0]
\end{align*}
and thus \eqref{limpsigeqs02acc} holds, whereas the last restriction in \eqref{s0defstrictmonincacc} asserts that
\[
\tau_0 = \f{2\sqrt{s_0}}A \leq \f{2\sqrt{\f{A^2\tau^2}4}}A = \tau.
\]
\end{bew}

Now we are in position to prove strict monotonicity of $w$. To this effect, for $t_0\geq 0$ we devise a subsolution $\uw$ in $[0,s_0]\times[t_0,t_0+\tau_0)$ for some $s_0\in(0,R^n)$ and $\tau_0>0$ arbitrarily small with the property that $\uw(s,t)$ asymptotically grows beyond $w(0,t_0)$ for all $s>0$ as $t \nearrow t_0+\tau_0$.

\begin{lemma}\label{strictmonotonacclemma}
Let $n\geq 3$, $m>0$, $\mu := \f{mn}{\omega_n R^n}$ and $R>0$, and suppose $w_0$ satisfies \eqref{initacc}. Moreover, assume that for some $T>0$, $w\in C^0((0,R^n]\times[0,T))\cap C^{2,1}((0,R^n)\times(0,T))$ is a nonnegative classical solution of \eqref{0wacc} with $w_s\geq 0$ in $(0,R^n)\times(0,T)$, and suppose that
\begin{align*}
\inf_{s\in(0,R^n)} w(s,t_0) > 0 \qquad \text{for some }t_0\in(0,T).
\end{align*}
Then
\begin{align}\label{massstrictlymonotonacc}
\inf_{s\in(0,R^n)} w(s,t) > \inf_{s\in(0,R^n)} w(s,t_0) \qquad \text{for all }t\in(t_0,T).
\end{align}
\end{lemma}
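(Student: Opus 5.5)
Set $a_0:=\inf_{s\in(0,R^n)} w(s,t_0)>0$. By Lemma \ref{infmoninclemacc}, $w\geq a_0$ on $(0,R^n]\times[t_0,T)$, and $t\mapsto\inf_s w(s,t)$ is nondecreasing. It therefore suffices to show that for every $\tau\in(0,T-t_0)$ there is $t_1\in(t_0,t_0+\tau]$ with $\inf_s w(s,t_1)\geq a_0+c$ for some $c>0$; monotonicity then carries the strict inequality to every later $t$.

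The plan is to compare $w$ with the subsolution
\[
\uw(s,t):=a_0+\kappa\,\psi(s,t), \qquad (s,t)\in[0,s_0]\times[t_0,t_0+\tau_0),
\]
where $\psi$, $\tau_0\leq\tau$ and $s_0\leq s_*$ are taken from Lemma \ref{functionpsidiffineqpluginfct} with this $a_0$, and $\kappa>0$ is small. Since $\psi_s\geq 0$, we have $\uw\uw_s\geq a_0\uw_s$ wherever $\uw_s\geq0$. Hence \eqref{psidiffineqacc} yields
\[
\uw_t-n^2s^{2-\f2n}\uw_{ss}-n\uw\uw_s+\mu s\uw_s\leq \kappa\big(\psi_t-n^2s^{2-\f2n}\psi_{ss}-(na_0-\mu s)\psi_s\big)\leq0
\]
on $\mathcal G$. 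To apply Lemma \ref{halfcomparison} on the interval $(0,s_0]$ in place of $(0,R^n]$ (the proof is purely local), I check the remaining hypotheses as follows.
\begin{itemize}
\item[(i)] Quasi-boundary condition at $0$: take $a:=a_0$. Then $\uw(0,t)=a_0$ by \eqref{psi0at0leqs0ats0acc}, and $w\geq a_0$ holds for $t\geq t_0$.
\item[(ii)] Right boundary: $\uw(s_0,t)\leq a_0+\kappa s_0$. I need this to be at most $w(s_0,t)$ for $t\in[t_0,t_0+\tau_0]$. By Lemma \ref{minimalsolacc}, $w_s>0$, so $w(s_0,t)>w(s_0/2,t)\geq a_0$. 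By continuity of $w$ on $[s_0/2,s_0]\times[t_0,t_0+\tau_0]$ I get $w(s_0,t)\geq a_0+c_1$ for some $c_1>0$ (using $w(s_0,t)-w(s_0/2,t)>0$ and compactness). Choosing $\kappa s_0\leq c_1$ handles this.
\item[(iii)] Initial data: \eqref{psiissmallersinitiallyacc} gives $\psi(s,t_0)\leq s$. I need $a_0+\kappa s\leq w(s,t_0)$ on $(0,s_0]$. This is the main obstacle, because $w(\cdot,t_0)-a_0$ may vanish faster than linearly as $s\searrow0$.
\end{itemize}
For (iii) I would first try to avoid starting at $t_0$ itself. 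I would start at a slightly later time $t_0'\in(t_0,t_0+\tau/2)$, using $\inf w(\cdot,t_0')\geq a_0$, and prescribe $\psi(\cdot,t_0')$ via Lemma \ref{uvpacclemma}. That lemma is applied with $\vp:=w(\cdot,t_0')$ on $[0,s_0]$, extended by $a_0$ at $0$, which is legitimate since $\vp'>0$. It produces a convex minorant $\uvp$ lying strictly between $a_0$ and $w(\cdot,t_0')$. The scaling in $\psi$ would then be modified so that $\psi(\cdot,t_0')\leq(\uvp-a_0)/\kappa$. The difficulty is that the differential inequality must survive this modification. The simpler fallback is to scale space instead: replace $\kappa s$ by $\kappa\,\uvp$-type control, noting that $\psi(s,t_0)\leq s$ only matters near $s=0$. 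Failing that, I would use that $w_t$ is bounded near $(s,t_0)$ to show that $w(s,t)-a_0\gtrsim s$ for $t$ slightly larger, by comparison on $[0,s_0]$ with the linear subsolution $a_0+\epsilon s\,(t-t_0)$.

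Once the comparison gives $w\geq\uw$ on $(0,s_0]\times[t_0,t_0+\tau_0)$, \eqref{limpsigeqs02acc} yields, for each fixed $s\in(0,s_0]$,
\[
\liminf_{t\nearrow t_0+\tau_0}w(s,t)\geq a_0+\kappa\f{s_0}2 .
\]
Combined with $w_s\geq0$ and continuity of $w$ in $t$ for $s>0$, this gives $w(s,t_0+\tau_0)\geq a_0+\kappa s_0/2$ for all $s>0$, so $\inf_s w(s,t_0+\tau_0)>a_0$. Since $\tau$ was arbitrary and $\tau_0\leq\tau$, monotonicity from Lemma \ref{infmoninclemacc} yields \eqref{massstrictlymonotonacc} for all $t\in(t_0,T)$.
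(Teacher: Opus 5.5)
Your skeleton matches the paper's: the same $\psi$ from Lemma \ref{functionpsidiffineqpluginfct}, the quasi-boundary value $a=a_0$ justified by Lemma \ref{infmoninclemacc}, a uniform gap $w(s_0,t)\geq a_0+c_1$ obtained from $w_s>0$, and the limit \eqref{limpsigeqs02acc}. However, step (iii) is a genuine gap, and none of your fallbacks closes it.

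Since $\psi\geq\frac s2$ by construction, the affine ansatz $\uw=a_0+\kappa\psi$ can only lie below $w(\cdot,t_0)$ if $w(s,t_0)-a_0\geq\frac\kappa2 s$ near $s=0$, and nothing gives you that. Your linear candidate $a_0+\epsilon s(t-t_0)$ is not a subsolution. At any fixed $s>0$ its time derivative equals $\epsilon s>0$, while the right-hand side $(n\uw-\mu s)\epsilon(t-t_0)$ tends to $0$ as $t\searrow t_0$. You also have no control of $w_t$ as $s\searrow0$, because $w$ is only known to be regular on $(0,R^n]$. The rescaling idea, as stated, would require a new $\psi$ with superlinearly vanishing initial data that still satisfies \eqref{psidiffineqacc}, \eqref{psi0at0leqs0ats0acc} and \eqref{limpsigeqs02acc}. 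You do not construct such a function.

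The missing idea is to replace the affine outer function by a convex one. Apply Lemma \ref{uvpacclemma} at time $t_0$ itself; no shift to a later time is needed. Take $\vp:=w(\cdot,t_0)$ on $[0,s_0]$ with $\vp(0):=a_0$ (note $\vp'>0$ by Lemma \ref{minimalsolacc}) and $\eta:=c_1$. This yields $\uvp$ with $\uvp(0)=a_0$, $\uvp(s_0)\leq a_0+c_1$, $a_0<\uvp<w(\cdot,t_0)$ on $(0,s_0]$, and $\uvp',\uvp''>0$. Now set $\uw:=\uvp\circ\psi$. Then in $\mathcal G$,
\[
\uw_t-n^2s^{2-\f2n}\uw_{ss}-n\uw\uw_s+\mu s\uw_s\leq\uvp'(\psi)\Big\{\psi_t-n^2s^{2-\f2n}\psi_{ss}-(na_0-\mu s)\psi_s\Big\}\leq0 .
\]
This holds because the extra term $-n^2s^{2-\f2n}\uvp''(\psi)\psi_s^2$ is nonpositive, and because $\uvp\geq a_0$ and $\psi_s\geq0$.

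Your rescaling attempt amounts to exactly this if you take $\tilde\psi:=(\uvp\circ\psi-a_0)/\kappa$. The point you were missing is that convexity of $\uvp$ is what makes the differential inequality survive the modification.

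With this choice the initial ordering is automatic. By \eqref{psiissmallersinitiallyacc}, $\uw(s,t_0)\leq\uvp(s)<w(s,t_0)$, however fast $w(\cdot,t_0)-a_0$ vanishes at $0$. The boundary conditions are handled as in your (i)--(ii). Comparison then gives $w(s,t_0+\tau_0)\geq\uvp(s_0/2)>a_0$ on $(0,s_0]$, and hence on all of $(0,R^n]$ by $w_s\geq0$.
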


\begin{bew}
By contradiction, we assume that \eqref{massstrictlymonotonacc} was false and thus, due to the monotonicity ensured by Lemma \ref{infmoninclemacc}, there existed $\tau>0$ such that, abbreviating $a_0:=\inf_{s\in(0,R^n)} w(s,t_0)>0$,
\begin{align}\label{infstaysconstantfortauacc}
\inf_{s\in(0,R^n)} w(s,t) = a_0 \qquad \text{for all }t\in[t_0,t_0+\tau].
\end{align}
We may then invoke Lemma \ref{functionpsidiffineqpluginfct} to find $\tau_0 \in (0,\tau]$, $s_0=s_0(n,\mu,a_0,\tau) \in(0,\f{R^n}2)$, $\mathcal G$ as in \eqref{calGdefacc} and a function $\psi \in C^1([0,s_0]\times[t_0,t_0+\tau_0))\cap C^2(\mathcal G)$ satisfying \eqref{psisislocallyboundedacc} -- \eqref{limpsigeqs02acc}.\\
Now since $w_s>0$ in $(0,R^n)\times[t_0,t_0+\tau]$ by \eqref{wsrealgreaterzeroacc} and $w(s_0, \cdot)$ is continuous in $[t_0,t_0+\tau]$, there exists $\eta>0$ with the property that
\begin{align}\label{ws0tlowerestacc}
w(s_0, t) \geq a_0 + \eta \qquad \text{for all }\eta \in [t_0,t_0+\tau].
\end{align}
Then, again utilizing that $w_s(\cdot,t_0)>0$ in $(0,s_0]$ as $s_0 \leq \f{R^n}2$, by Lemma \ref{uvpacclemma} there exists a function $\uvp \in C^1([0,s_0])\cap C^2((0,s_0])$ with the properties that 
\begin{align}\label{uvpspatialboundariesacc}
\uvp(0)=a_0 \quad \text{and} \quad \uvp(s_0) \leq a_0+\eta,
\end{align}
as well as
\begin{align}\label{uvplesswt0acc}
a_0<\uvp(\xi)<w(\xi,t_0) \qquad \text{for all }\xi\in(0,s_0]
\end{align}
and 
\begin{align}\label{uvpprime12greater0acc}
\uvp'(\xi)>0 \quad \text{and } \quad \uvp''(\xi)>0 \qquad \text{for all }\xi\in(0,s_0].
\end{align}
In turn, we establish
\begin{align*}
\uw(s,t):= \uvp(\psi(s,t)), \qquad (s,t)\in [0,s_0]\times[t_0,t_0+\tau_0),
\end{align*}
and observe that accordingly, $\uw$ lies in $C^1([0,s_0]\times[t_0,t_0+\tau_0))\cap C^2(\mathcal G)$ and, as $\psi_s\geq 0$ by \eqref{psisislocallyboundedacc}, satisfies
\begin{align}\label{psidiffineqfinalacc}
\uw_t - n^2s^\alpha \uw_{ss} - n \uw \uw_s + \mu s \uw_s &= \uvp' \cdot \psi_t - n^2 s^\alpha(\uvp'' \cdot \psi_s^2 + \uvp' \cdot \psi_{ss}) - n \uvp\cdot\uvp'\cdot \psi_s + \mu s \uvp' \cdot \psi_s \notag\\
&\leq \uvp' \cdot \Big\{ \psi_t - n^2 s^\alpha \psi_{ss} - n \uvp \psi_s + \mu s\psi_s \Big\} \notag\\
&\leq \uvp' \cdot \Big\{ \psi_t - n^2 s^\alpha \psi_{ss} - n a_0 \psi_s + \mu s\psi_s \Big\} \notag\\
&= \uvp' \cdot \Big\{ \psi_t - n^2 s^\alpha \psi_{ss} - (n a_0 - \mu s)\psi_s \Big\} \notag\\
&\leq 0
\end{align}
in $\mathcal G$ on account of \eqref{uvpprime12greater0acc}, \eqref{uvpspatialboundariesacc} and \eqref{psidiffineqacc}.\\
Once more utilizing \eqref{psisislocallyboundedacc}, $\uw_s = \uvp' \cdot \psi_s$ is bounded locally in $[0,s_0]\times[t_0,t_0+\tau_0)$.\\
By \eqref{psi0at0leqs0ats0acc} and \eqref{uvpspatialboundariesacc}, we verify
\begin{align}\label{uwisa0at0acc}
\uw(0,t)=\uvp(\psi(0,t))=\uvp(0)=a_0 \qquad \text{for all }t\in[t_0,t_0+\tau_0),
\end{align}
and, additionally resorting to the monotonicity of $\uvp$ and \eqref{ws0tlowerestacc}, we may infer that
\begin{align}\label{uwissmallerats0acc}
\uw(s_0,t) &= \uvp(\psi(s_0,t)) \notag\\
&\leq \uvp(s_0) \notag\\
&\leq a_0 + \eta \notag\\
&\leq w(s_0,t) \qquad \text{for all }t\in[t_0,t_0+\tau_0).
\end{align}
Moreover, again in light of the monotonicity of $\uvp$, \eqref{psiissmallersinitiallyacc} and \eqref{uvplesswt0acc}, it follows that
\begin{align}\label{uwissmallerinitiallyacc}
\uw(s,t_0)&=\uvp(\psi(s,t_0)) \notag\\
&\leq \uvp(s) \notag\\
&< w(s,t_0) \qquad \text{for all }s\in(0,s_0].
\end{align}
Given \eqref{psidiffineqfinalacc}, \eqref{uwisa0at0acc}, \eqref{uwissmallerats0acc}, \eqref{uwissmallerinitiallyacc}, the fact that $\uw_s$ is bounded locally in time, and the discreteness of the set $N(t)$ for each $t\in[t_0,t_0+\tau_0)$ in the definition of $\mathcal G$, we may invoke the comparison principle Lemma \ref{halfcomparison} to deduce that
\begin{align*}
\uw(s,t) \leq w(s,t) \qquad \text{for all }s\in(0,s_0]~\text{and}~t\in[t_0,t_0+\tau_0).
\end{align*}
In conjunction with the continuity of $w(s,\cdot)$ for each $s\in(0,s_0]$ and \eqref{limpsigeqs02acc}, this entails that
\begin{align*}
w(s,t_0+\tau_0) &= \lim_{t \nearrow t_0+\tau_0} w(s,t) \\
&\geq \lim_{t \nearrow t_0+\tau_0} \uw(s,t) \\
&= \lim_{t \nearrow t_0+\tau_0} \uvp(\psi(s,t)) \\
&= \uvp\Big(\lim_{t \nearrow t_0+\tau_0}\psi(s,t)\Big) \\
&\geq \uvp\Big(\f{s_0}2\Big) \qquad \text{for all }s\in(0,s_0].
\end{align*}
As $\uvp(\f{s_0}2)>a_0$ by \eqref{uvplesswt0acc}, this implies that
\begin{align*}
w(s,t_0+\tau_0) \geq \uvp\Big(\f{s_0}2\Big) > a_0 \qquad \text{for all }s\in(0,s_0],
\end{align*}
which in view of $\tau_0 \leq \tau$ contradicts \eqref{infstaysconstantfortauacc} and hence confirms that \eqref{massstrictlymonotonacc} must hold.
\end{bew}

\subsection{Right-continuity}

By applying Lemma \ref{uvpacclemma} to the inverse map and extending appropriately, we obtain a bounded concave, strictly increasing function which dominates a given suitably regular increasing map and coincides with it on the left.

\begin{lemma}\label{ovpacclemma}
Let $a\in\R,~b>a$, $a_0\in\R$ and $\vp \in C^0([a,b]) \cap C^1((a,b])$ be such that $\vp(a)=a_0$ and $\vp'\geq 0$ on $(a,b]$.\\
Then there exists $\ovp \in C^0([a,\infty))\cap C^2((a,\infty))$ such that $\ovp(a)=a_0$ as well as 
\[
\ovp'(\xi)>0~\text{and}~\ovp''(\xi)<0 \qquad \text{for all }\xi\in(a,\infty)
\]
and that furthermore $\ovp$ is bounded and fulfills
\[
\ovp(\xi)>\vp(\xi) \qquad \text{for all }\xi\in(a,b].
\]
\end{lemma}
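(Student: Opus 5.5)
The plan is to mirror the proof of Lemma \ref{uvpacclemma}: first build a strictly concave, strictly increasing majorant on $[a,b]$ by working with derivatives, then extend it to $[a,\infty)$ as a bounded strictly concave function. Because $\vp'$ is merely nonnegative and possibly unbounded near $a$, I would not apply Lemma \ref{uvpacclemma} to an inverse map literally. Instead I would construct $\ovp'$ directly as a positive, strictly decreasing function lying above a suitable majorant of $\vp'$.

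\emph{Step 1 (monotone majorant of the derivative).} Fix a strictly decreasing sequence $\xi_1=b>\xi_2>\dots\searrow a$. For $j\ge1$ set
\[
M_j:=1+\max_{[\xi_{j+1},b]}\vp'+\tfrac{1}{\xi_{j+1}-a},
\]
which is finite since $\vp'\in C^0((a,b])$. Then $(M_j)$ is strictly increasing, $M_j\to\infty$, and $\vp'< M_j$ on $[\xi_{j+1},b]$. Using the same four-parameter $C^1$ interpolants $\chi$ as in the proof of Lemma \ref{uvpacclemma}, but now decreasing, glue on each $[\xi_{j+1},\xi_j]$ a strictly decreasing $C^1$ piece from the value $M_{j+1}$ to $M_j$. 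This gives $g\in C^1((a,b])$ with $g'<0$ and $g>\vp'$ on $(a,b]$.

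\emph{Step 2 (integrability).} The integral $\int_a^b g$ may diverge, and controlling this is the main obstacle. If I only needed $\ovp>\vp$, I could use $\vp'$ directly. However, $\vp'$ itself is integrable because $\vp(b)-\vp(\xi)=\int_\xi^b\vp'$ with $\vp$ continuous at $a$. Therefore the majorant must be chosen to stay integrable. I would instead take $M_j:=1+\sup_{[\xi_{j+1},\xi_{j-1}]}\vp'$, choosing the decreasing envelope via the points $\xi_j$ so that $\sum_j M_{j}(\xi_{j}-\xi_{j+1})<\infty$.

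This is the delicate point. A general integrable function's decreasing envelope $\sup_{[\xi,b]}\vp'$ need not be integrable, for instance when $\vp'$ has thin tall spikes. If that obstruction arises, I would switch to majorizing $\vp$ itself rather than $\vp'$. Concretely, I would take $\ovp$ to be (a smoothing of) the least concave majorant of $\vp+ (\xi-a)^{1/2}\,\cdot$small, or more simply
\[
\ovp(\xi):=a_0+\sup_{(a,b]}\big(\vp-a_0\big)\text{-based concave hull}.
\]
The concave hull $h$ of $\vp$ on $[a,b]$ satisfies $h(a)=a_0$, using continuity of $\vp$ at $a$ and monotonicity. Since $h$ is concave and nondecreasing, it is integrable-derivative by construction. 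I would then set $\ovp:=h+\sqrt{\xi-a}$ suitably mollified away from $a$. This makes the inequality strict and keeps $\ovp''<0$ and $\ovp'>0$, since the $\sqrt{\cdot}$ term gives strictly negative curvature and mollification preserves concavity. I expect the concave-hull route to be the cleaner one, and it makes Steps 1 and 2 unnecessary.

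\emph{Step 3 (extension).} On $[b,\infty)$, continue with
\[
\ovp(\xi):=\ovp(b)+\frac{\ovp'(b)}{k}\bigl(1-e^{-k(\xi-b)}\bigr),
\]
with $k>0$ chosen so that $k\ge$ the decay rate of $\ovp'$ at $b^-$. A $C^2$ matching at $b$ can be arranged by choosing $k=-\ovp''(b)/\ovp'(b)$, after smoothing so that $\ovp''(b^-)$ exists. This extension is bounded and has positive first and negative second derivatives, so it completes the construction.
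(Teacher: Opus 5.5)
\textbf{There is a genuine gap: the smoothing step.} Your concave-hull route is sound up to that point. The least concave majorant $h$ of $\vp$ is continuous on $[a,b]$, satisfies $h(a)=a_0$, and is nondecreasing (e.g.\ because $\min\{h,h(b)\}$ is again a concave majorant). Hence $H:=h+\sqrt{\xi-a}$ is a strictly concave, strictly increasing strict majorant of $\vp$ on $(a,b]$ with $H(a)=a_0$.

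The trouble is the step ``suitably mollified away from $a$ \ldots\ mollification preserves concavity''. In general $H\notin C^2$: on the contact set $\{h=\vp\}$, which may accumulate at $a$, it has only the regularity of $\vp$. So what remains is to get $C^2$ regularity and $\ovp''<0$ at points arbitrarily close to $a$, while keeping $\ovp(a)=a_0$ and $\ovp>\vp$. That is the actual content of the lemma, and your justification does not deliver it.

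\begin{itemize}
\item Concavity is preserved by convolution with a \emph{fixed} kernel, but a fixed width is not available here. It would either need values of $H$ to the left of $a$, where no concave extension exists because $H'(a^+)=\infty$, or it would push $\ovp(a)$ above $a_0$.
\item With a width $\eps(\xi)\to 0$ as $\xi\searrow a$, the second derivative picks up terms containing $\eps'$ and $\eps''$. Neither concavity nor $\ovp''<0$ is then automatic.
\end{itemize}

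This can be repaired, but some argument must be supplied. For instance, take $\tilde H:=a_0+2(h-a_0)+\sqrt{\xi-a}$ and $0\le\zeta\in C_c^\infty((\frac12,1))$ of unit mass. The dilation average $\int_{1/2}^1\tilde H(a+\lambda(\xi-a))\zeta(\lambda)\,d\lambda$ is $C^\infty$ on $(a,b]$. It has strictly positive first and strictly negative second derivative, tends to $a_0$ as $\xi\searrow a$, and is $\geq h(\xi)+\sqrt{(\xi-a)/2}$ by concavity of $h$.

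\textbf{How the paper avoids this.} It uses the inverse map you dismissed, and neither of your objections to it applies.
\begin{itemize}
\item After adding $\xi-a$ to $\vp$ one has $\vp'\geq 1$. Then $\rho:=\vp^{-1}\in C^0([a_0,B])\cap C^1((a_0,B])$ with $\rho'=1/\vp'(\rho)>0$ on $(a_0,B]$.
\item Unboundedness of $\vp'$ only means $\rho'\to 0$ at $a_0$, which Lemma \ref{uvpacclemma} allows.
\item That lemma yields a strictly convex, strictly increasing $\urho\in C^1([a_0,B])\cap C^2((a_0,B])$ with $\urho(a_0)=a$ and $\urho<\rho$ on $(a_0,B]$.
\item Then $\ovp:=\urho^{-1}$ is $C^2$, strictly increasing and strictly concave on $(a,\urho(B)]$, and exceeds $\vp$ there.
\item The paper extends exponentially from $\urho(B)<b$, exactly as in your Step 3, using $\ovp>\vp(b)\geq\vp$ on $(\urho(B),b]$.
\end{itemize}
Inversion turns the concave-majorant problem into a convex-minorant problem. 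In the former, the derivative must dominate a possibly non-integrable envelope, which is your Step 2 obstacle. In the latter, the derivative only has to lie in $(0,\rho')$ and is automatically bounded. The $C^2$ regularity is built into the construction instead of being recovered by mollification.
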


\begin{bew}
Replacing $\vp(\xi)$ by $\vp(\xi)+\xi$ for $\xi\in[a,b]$ if necessary, we may assume that $\vp'>0$ on $(a,b]$.\\
Then $\vp$ is a bijection from $[a,b]$ onto $[a_0,B]$ with $B:=\vp(b)>a_0$, so that $\rho:=\vp^{-1}$ is a well-defined element of $C^0([a_0,B])\cap C^1((a_0,B])$ satisfying $\rho(a_0)=a$ and $\rho'>0$ on $(a_0,B]$.\\
Therefore, Lemma \ref{uvpacclemma} applies so as to provide $\urho \in C^0([a_0,B])\cap C^2((a_0,B])$ with the properties that $\urho(0)=0$ as well as
\[
\urho(\eta) < \rho(\eta), \quad \urho'(\eta)>0 \quad \text{and} \quad \urho''(\eta)>0 \qquad \text{for all }\eta\in(a_0,B].
\]
Thus, if we firstly define $\ovp$ on $[a,\xi_0]$ with $\xi_0:=\urho(B) < \rho(B) = b$ by letting
\[
\ovp(\xi):=\urho^{-1}(\xi), \qquad \xi \in[a,\xi_0],
\]
then $\ovp$ lies in $C^0([a,\xi_0])\cap C^2((a,\xi_0])$ with
\[
\ovp'(\xi)=\f1{\urho'(\urho^{-1}(\xi))} > 0 \qquad \text{for all }\xi \in (a,\xi_0]
\]
and
\[
\ovp''(\xi)=-\f{\urho''(\urho^{-1}(\xi))}{\urho'^3(\urho^{-1}(\xi))} < 0 \qquad \text{for all }\xi \in (a,\xi_0]
\]
as well as $\ovp(a)=a_0$, and since $\rho > \urho$ on $(a_0,B]$, from the strict monotonicity of $\urho^{-1}$ it moreover follows that
\begin{align*}
\ovp(\xi)=\urho^{-1}(\xi)=\urho^{-1}(\rho(\vp(\xi))) > \urho^{-1}(\urho(\vp(\xi))=\vp(\xi) \qquad \text{for all }\xi\in(a,\xi_0].
\end{align*}
To finally extend $\ovp$ so as to exist on all of $[a,\infty)$, we note that $c_1:=\ovp(\xi_0),~c_2:=\ovp'(\xi_0)$ and $c_3:= - \ovp''(\xi_0)$ are all positive, so that if we let
\[
\ovp(\xi):=c_1 + \f{c_2^2}{c_3}\cdot(1-e^{-\f{c_3}{c_2}(\xi-\xi_0)}) \qquad \text{for }\xi\in(\xi_0,\infty),
\]
then it can readily be verified that in fact $\ovp\in C^0([a,\infty)) \cap C^2((a,\infty))$ is bounded with
\[
\ovp'(\xi)= c_2 e^{-\f{c_3}{c_2}(\xi-\xi_0)} > 0 \qquad \text{for all }\xi\in(\xi_0,\infty)
\]
and
\[
\ovp''(\xi)= -c_3 e^{-\f{c_3}{c_2}(\xi-\xi_0)} < 0 \qquad \text{for all }\xi\in(\xi_0,\infty)
\]
and hence, in particular, also
\[
\ovp(\xi) > \ovp(\xi_0) = \urho^{-1}(\xi_0) = B = \vp(b) \geq \vp(\xi) \qquad \text{for all }\xi\in(\xi_0,b]
\]
again due to the monotonicity of $\vp$.
\end{bew}

We use a comparison argument involving the approximate solutions $\weps$ from Lemma \ref{wepsolacc} and a supersolution crafted by shifting the function from the previous lemma to the left, thereby proving that the minimal solutions created in Lemma \ref{createdsol} are right-continuous.

\begin{lemma}\label{rightcontinuityacc}
Let $n\geq 1$, $R>0$, $m>0$, $\mu := \f{mn}{\omega_n R^n}$, and suppose $w_0$ satisfies \eqref{initacc}. Furthermore, assume that for some $T>0$, $w\in C^0((0,R^n]\times[0,T))\cap C^{2,1}((0,R^n]\times(0,T))$ is the nonnegative classical solution of \eqref{0wacc} in $(0,R^n)\times(0,T)$ from Lemma \ref{createdsol}.\\
Then $t \mapsto \inf_{s\in(0,R^n)} w(s,t)$ is continuous from the right on $[0,T)$, that is, for all $t_0 \in [0,T)$ and any $\eta>0$ one can find $\delta \in (0,T-t_0)$ such that
\begin{align}\label{infimumrisesnottoomuch}
\inf_{s\in(0,R^n)} w(s,t) \leq \inf_{s\in(0,R^n)} w(s,t_0) + \eta \qquad \text{for all }t\in(t_0,t_0+\delta).
\end{align}
\end{lemma}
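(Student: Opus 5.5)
The idea is to bound the minimal solution from above by bounding each approximating solution $\weps$ from Lemma \ref{wepsolacc} from above. As a supersolution we take a left-shift of the concave majorant from Lemma \ref{ovpacclemma}, and then pass to the limit $\eps\searrow 0$ using the monotone convergence $\weps\nearrow w$ from Lemma \ref{createdsol}. Fix $t_0\in[0,T)$ and write $a_0:=\inf_{s\in(0,R^n)}w(s,t_0)=w(0,t_0)$; this equality holds because $w_s\ge0$.

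\emph{Step 1: choosing a concave majorant.} Set $\vp(s):=w(s,t_0)$ for $s\in(0,R^n]$ and $\vp(0):=a_0$. If $t_0>0$, then $\vp\in C^0([0,R^n])\cap C^1((0,R^n])$ with $\vp'\ge0$ by \eqref{limitregacc} and \eqref{wsgeq0asaprep}. If $t_0=0$, then $w_0$ is only in $W^{1,\infty}_{loc}$. In that case I first replace $\vp$ by a $C^1((0,R^n])$ nondecreasing majorant $\tilde\vp\ge w_0$ with $\tilde\vp(0)=0$, for instance $\tilde\vp(s)=\sup_{\sigma\le s}w_0(\sigma)+s$ suitably smoothed on $(0,R^n]$. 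Continuity at $0$ survives this because $w_0(0)=0$ and $w_0$ is continuous.

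Lemma \ref{ovpacclemma} with $a=0$, $b=R^n$ then gives a function $\ovp\in C^0([0,\infty))\cap C^2((0,\infty))$ with $\ovp(0)=a_0$, $\ovp'>0$, $\ovp''<0$ and $\ovp\le M:=\sup\ovp<\infty$. It also satisfies $\ovp>\vp$ on $(0,R^n]$, so in particular $\ovp\ge \ovp(R^n)>\vp(R^n)=\f m{\omega_n}$ on $[R^n,\infty)$.

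\emph{Step 2: the supersolution.} With $A:=nM$ we set
\[
\ow(s,t):=\ovp\big(s+A(t-t_0)\big),\qquad (s,t)\in(0,R^n]\times[t_0,T).
\]
Using $f_\eps(\xi)\le\xi$ for $\xi\ge0$, $\ovp''<0$ and $\ovp'>0$, we get
\[
\ow_t-n^2s^{2-\f2n}\ow_{ss}-n\ow f_\eps(\ow_s)+\mu s\ow_s\;\ge\;\ovp'\,\{A-n\ovp\}\;\ge\;0 .
\]
So $\ow$ is a supersolution of the $\eps$-problem \eqref{0wepsacc} for every $\eps$. The other hypotheses of Lemma \ref{halfcomparison} hold as follows, with $a:=0$:
\begin{itemize}
\item $\ow\ge 0$ on the whole domain.
\item $\weps$ is continuous on $[0,R^n]\times[t_0,T)$, has $\weps(0,t)=0\le a$, and its $s$-derivative is bounded by the linear barrier from the proof of Lemma \ref{wepsolacc}.
\item At the right boundary, $\ow(R^n,t)\ge\ovp(R^n)>\f m{\omega_n}=\weps(R^n,t)$.
\item Initially, $\weps(\cdot,t_0)\le w(\cdot,t_0)\le\ovp=\ow(\cdot,t_0)$ on $(0,R^n]$, by \eqref{pointwisewepsacc}. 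For $t_0=0$ one uses $w_{0\eps}\le w_0\le\tilde\vp$ instead.
\item The derivatives $\ow_t,\ow_s,\ow_{ss}$ are continuous on $(0,R^n)\times(t_0,T)$, since the argument $s+A(t-t_0)$ stays positive there.
\end{itemize}
Lemma \ref{halfcomparison}, applied with the nonlinearity $f=f_\eps$, therefore yields $\weps\le\ow$ on $(0,R^n]\times[t_0,T)$ for all small $\eps$.

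\emph{Step 3: passing to the limit.} Letting $\eps\searrow0$ in \eqref{pointwisewepsacc} gives $w\le\ow$. Hence
\[
\inf_{s}w(s,t)\le\lim_{s\searrow0}\ow(s,t)=\ovp\big(A(t-t_0)\big)\qquad\text{for } t\in(t_0,T).
\]
By continuity of $\ovp$ at $0$ with $\ovp(0)=a_0$, we can choose $\delta$ such that $\ovp(A\delta)\le a_0+\eta$, which gives \eqref{infimumrisesnottoomuch}.

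The step I expect to be most delicate is Step 1. One needs a majorant that is simultaneously concave, strictly increasing, bounded, and continuous with exactly the value $a_0$ at $s=0$. This is precisely what Lemma \ref{ovpacclemma} provides once $\vp$ is $C^1$ on $(0,R^n]$, so the remaining care is needed only at $t_0=0$, where the smoothing of the majorant must not destroy continuity at $0$. A second point to check is that comparison is carried out against $\weps$ and not against $w$ directly. This is essential: it supplies the boundary value $\weps(0,\cdot)=0$ required in \eqref{comp0}, and the argument is also the only place where minimality of $w$ enters.
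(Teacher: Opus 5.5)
Your proposal is correct and follows the paper's proof essentially step for step: the concave bounded majorant $\ovp$ from Lemma~\ref{ovpacclemma}, the left-shifted supersolution $\ovp(s+A(t-t_0))$ with $A=n\|\ovp\|_{L^\infty((0,\infty))}$, comparison with the approximations $\weps$ via Lemma~\ref{halfcomparison}, and the limit $\eps\searrow 0$. Your separate treatment of $t_0=0$ (there $w_0$ is merely $W^{1,\infty}_{loc}$, so a $C^1$ majorant is needed before Lemma~\ref{ovpacclemma} applies) fills in a point the paper passes over; note also that the linear barrier only yields $\weps\le Cs$, not a bound on $\wepss$, but the comparison argument really only uses $0\le f_\eps(\wepss)\le 1+\f1\eps$, which holds regardless.
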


\begin{bew}
Since Lemma \ref{createdsol} guarantees that for any $t_0 \geq 0$, $w(\cdot,t_0)$ lies in $C^0([0,R^n]) \cap C^1((0,R^n])$ and is monotonically increasing, abbreviating $a_0:=\inf_{s\in(0,R^n)} w(s,t_0)$ we may invoke Lemma \ref{ovpacclemma} to fix $\ovp \in C^0([0,\infty))\cap C^2((0,\infty))$ such that $\ovp(0)=a_0$ as well as 
\begin{align}\label{ovpderivativessign}
\ovp'(\xi)>0~\text{and}~\ovp''(\xi)<0 \qquad \text{for all }\xi\in(0,\infty)
\end{align}
and
\begin{align}\label{ovpgreaterwt0}
\ovp(\xi)>w(\xi,t_0) \qquad \text{for all }\xi\in(0,R^n].
\end{align}
Thereupon, we sequentially choose 
\begin{align}\label{AgreaternovpLinf}
A:=n \|\ovp\|_{L^\infty((0,\infty))},
\end{align}
$\delta>0$ small enough such that
\begin{align}\label{deltaovpa0def}
\ovp(A\delta)-a_0 \leq \eta,
\end{align}
which is possible due to continuity of $\ovp$.\\
We proceed to define $\ow \in C^0([0,R^n]\times[t_0,t_0+\delta])\cap C^2((0,R^n]\times(t_0,t_0+\delta])$ via
\[
\ow(s,t):= \ovp(s+A(t-t_0)), \qquad s\in[0,R^n],~t\in[t_0,t_0+\delta],
\]
which is clearly a nonnegative function. Then for $\eps \in (0,\f{R^n\omega_n}m)$, we denote $\weps$ as the classical solutions to \eqref{0wepsacc} from Lemma \ref{wepsolacc} and observe that
\begin{align}\label{initialwepssmallerowacc}
\weps(s,t_0) \leq w(s,t_0) < \ovp(s) = \ow(s,t_0) \qquad \text{for all }s\in(0,R^n]
\end{align}
and each $\eps \in (0,\f{R^n\omega_n}m)$ by \eqref{pointwisewepsacc} and \eqref{ovpgreaterwt0}, as well as
\begin{align}\label{0Rnwepssmallereqowacc}
\weps(0,t)=0 \quad \text{and} \quad \weps(R^n,t) = w(R^n,t) = w(R^n,t_0) \leq \ovp(R^n) \leq \ovp(R^n+A(t-t_0))=\ow(R^n,t)
\end{align}
for all $t\in[t_0,t_0+\delta]$ and $\eps \in (0,\f{R^n\omega_n}m)$ due to \eqref{0wepsacc}, \eqref{ovpgreaterwt0} and \eqref{ovpderivativessign}. Moreover, once more utilizing \eqref{ovpderivativessign}, we see that $\ow_s \geq 0$ and $\ow_{ss}\leq 0$, and thus by \eqref{AgreaternovpLinf} and the fact that $f_\eps(\xi)\leq \xi$ for all $\xi \geq 0$, we may estimate
\begin{align}\label{diffineqwepsowovpacc}
\ow_t - n^2 s^{2-\f2n}\ow_{ss} - n \ow f_\eps(\ow_s) + \mu s \ow_s &= A \ow_s - n^2 s^{2-\f2n}\ow_{ss} - n \ow f_\eps(\ow_s) + \mu s \ow_s \notag\\
&\geq \Big\{A - n \ow \Big\} \ow_s \notag\\
&\geq \Big\{A - n \|\ovp\|_{L^\infty((0,\infty))}\Big\} \ow_s \notag\\
&= 0 \qquad \text{in}~(0,R^n]\times(t_0,t_0+\delta]~\text{for each}~\eps\in(0,\f{R^n\omega_n}m).
\end{align}
In turn, \eqref{initialwepssmallerowacc}, \eqref{0Rnwepssmallereqowacc} and \eqref{diffineqwepsowovpacc} enable us to apply the comparison principle Lemma \ref{halfcomparison} to infer that
\[
\weps(s,t) \leq \ow(s,t) \qquad \text{for all }(s,t)\in(0,R^n]\times[t_0,t_0+\delta)~\text{and each }\eps\in(0,\f{R^n\omega_n}m).
\]
Consequently, passing to the limit as $\eps \searrow 0$, by Lemma \ref{createdsol} we obtain
\[
w(s,t) \leq \ow(s,t) \qquad \text{for all }(s,t)\in(0,R^n]\times[t_0,t_0+\delta),
\]
which together with the continuity of $w(\cdot,t)$ in $[0,R^n]$ and \eqref{deltaovpa0def} implies that
\begin{align*}
w(0,t) \leq \ow(0,t)&= \ovp(A(t-t_0)) \\
&\leq \ovp(A\delta) \\
&\leq a_0 + \eta \\
&= \inf_{s\in(0,R^n)} w(s,t_0) + \eta \qquad \text{for all }t\in[t_0,t_0+\delta)
\end{align*}
and thus \eqref{infimumrisesnottoomuch} holds.
\end{bew}

\section{Asymptotically complete aggregation of mass in the origin}

As $w(0,\cdot)$ is increasing and bounded, we can already conclude that $w(0,t) \to w_\infty$ for some $w_\infty \in (0,\f{m}{\omega_n}]$ as $t\to\infty$ if $w(0,t_0)>0$ for some $t_0\geq 0$. We shall establish that indeed always $w_\infty=\f{m}{\omega_n}$.\\
Utilizing an implicit logarithmic structure, we first demonstrate that the differential equation in \eqref{0wacc} admits no nonconstant, nondecreasing positive steady states.

\begin{lemma}\label{staticsollemma}
Let $n\geq 3$ and $R>0$, and suppose that with some $\mu>0$, $W \in C^0((0,R^n]) \cap C^2((0,R^n))$ is a nondecreasing function satisfying
\begin{align}\label{StatWacc}
0 = n^2 s^{2-\frac{2}{n}} W_{ss} + nWW_s - \mu s W_s, \qquad  s\in (0,R^n),
\end{align}
as well as
\begin{align}\label{Wpositiveacc}
\inf_{s\in(0,R^n)}W(s) > 0.
\end{align}
Then
\begin{align}\label{staticsolconstant}
W_s(s)=0 \qquad \text{for all }s\in(0,R^n).
\end{align}
\end{lemma}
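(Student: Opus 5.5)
The plan is to use the fact that, for fixed $W$, the steady-state equation \eqref{StatWacc} is a first-order linear ODE in $z:=W_s$. Solving it explicitly will show that any nonzero value of $z$ forces $z$ to grow too fast near $s=0$ to be integrable. That contradicts the boundedness of $W$ from below guaranteed by \eqref{Wpositiveacc}.

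First I rewrite \eqref{StatWacc} on $(0,R^n)$ as
\[
z_s(s) = -\frac{nW(s)-\mu s}{n^2 s^{2-\frac2n}}\, z(s), \qquad z:=W_s \in C^1((0,R^n)).
\]
The coefficient $g(s):=\frac{nW(s)-\mu s}{n^2 s^{2-\frac2n}}$ is continuous on $(0,R^n)$, so uniqueness for linear ODEs gives the alternative: either $z\equiv 0$ on $(0,R^n)$, which is \eqref{staticsolconstant}, or $z$ vanishes nowhere. I argue by contradiction in the second case. Since $W$ is nondecreasing, $z>0$ throughout $(0,R^n)$, and for a fixed $s_1\in(0,R^n)$ we have
\[
z(s)=z(s_1)\exp\Big(\int_s^{s_1} g(\sigma)\,d\sigma\Big), \qquad s\in(0,s_1].
\]

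Next I bound the exponent from below. Set $a:=\inf_{(0,R^n)}W>0$ from \eqref{Wpositiveacc}. Shrinking $s_1$ if necessary so that $\mu s_1\le \frac{na}{2}$, I get $g(\sigma)\ge \frac{a}{2n}\sigma^{-2+\frac2n}$ on $(0,s_1]$. Because $n\ge 3$, the exponent satisfies $2-\frac2n>1$, so
\[
\int_s^{s_1} g \;\ge\; \frac{a}{2n(1-\frac2n)}\Big(s^{-(1-\frac2n)}-s_1^{-(1-\frac2n)}\Big).
\]
Hence $z(s)\ge c_1\exp\big(c_2 s^{-(1-\frac2n)}\big)$ on $(0,s_1]$ with $c_1,c_2>0$. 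In particular $z(s)\ge s^{-1}$ for all small $s$, so $\int_0^{s_1} z\,ds=\infty$.

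Finally, the fundamental theorem of calculus gives $W(s_1)-W(s)=\int_s^{s_1} z$. The left-hand side is at most $W(s_1)-a<\infty$ for every $s$, while the right-hand side tends to $\infty$ as $s\searrow 0$. This contradiction rules out the case $z\neq 0$ and yields \eqref{staticsolconstant}. The only delicate point is the divergence estimate, and it is exactly where $n\ge3$ enters. For $n=2$ the exponent is $1$, the integral grows only logarithmically, and $z$ behaves like a power of $s$ that may be integrable; this is the "implicit logarithmic structure" mentioned before the lemma. For $n\ge3$ the growth is exponential in a negative power of $s$, and the argument closes without further work.
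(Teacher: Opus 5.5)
Your proposal is correct and follows essentially the same route as the paper. Both arguments establish the dichotomy $W_s\equiv 0$ or $W_s>0$, choose $s_1$ with $\mu s_1\le \frac{na}{2}$, derive the lower bound $W_s(s)\ge W_s(s_1)\exp\big(\frac{a}{2(n-2)}(s^{-(1-\frac2n)}-s_1^{-(1-\frac2n)})\big)$, and use its non-integrability at $s=0$ to contradict $\inf W>0$. The only difference is that you solve the linear equation for $z=W_s$ with an explicit integrating factor, which also makes the dichotomy transparent, whereas the paper integrates the inequality $W_{ss}/W_s\le -\frac{c_1}{2n}s^{\frac2n-2}$. Your exponent $1-\frac2n$ is the correct one; the paper's display writes $\frac{n-2}{2}$ where $\frac{n-2}{n}$ is meant.
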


\begin{bew}
In view of a uniqueness property of initial-value problems for \eqref{StatWacc}, it is clear that if \eqref{staticsolconstant} was false, then actually
\begin{align}\label{Wspositiveacc}
W_s(s)>0 \qquad \text{for all }s\in(0,R^n).
\end{align}
To see that this is incompatible with \eqref{Wpositiveacc}, assuming the latter we abbreviate $c_1:= \inf_{s\in(0,R^n)}W(s)>0$ and may then pick $s_0\in(0,R^n)$ small enough such that
\[
\mu s_0 \leq \f{n c_1}2.
\]
Through \eqref{Wpositiveacc}, this entails that since $W_s \geq 0$, using \eqref{StatWacc} we can estimate
\begin{align*}
n^2 s^{2-\f2n}W_{ss}(s) &= -nW(s)W_s(s) + \mu s W_s(s) \\
&\leq -n c_1 W_s(s) + \mu s_0 W_s(s) \\
&\leq -\f{nc_1}2 W_s(s) \qquad \text{for all }s\in(0,s_0),
\end{align*}
so that
\[
\f{W_{ss}(s)}{W_s(s)} \leq - \f{c_1}{2n} s^{\f2n - 2} \qquad \text{for all }s\in(0,s_0),
\]
which upon integration shows that
\begin{align}\label{Wsexponentialineq}
W_s(s) \geq W_s(s_0) \cdot e^{\f{c_1}{2(n-2)}\left(s^{-\f{n-2}2}-s_0^{-\f{n-2}2}\right)} \qquad \text{for all }s\in(0,s_0),
\end{align}
because $n\geq 3$. Since herein $W_s(s_0)$ is positive by \eqref{Wspositiveacc}, observing that
\[
\int_0^{s_0} e^{\f{c_1}{2(n-2)}s^{-\f{n-2}2}} ds = \infty
\]
again due to the inequality $n\geq 3$, from further integration of \eqref{Wsexponentialineq} we can infer that
\[
W(s) = W(s_0) - \int_s^{s_0} W_s(\sigma) d\sigma \to -\infty \qquad \text{as}~s\searrow 0,
\]
which contradicts \eqref{Wpositiveacc} and thereby shows that actually indeed \eqref{staticsolconstant} must hold.
\end{bew}

Building on Lemma \ref{risingsubsolsol} once more, we deduce that $w$ converges to $\f{m}{\omega_n}$ pointwise on $(0,R^n]$.

\begin{lemma}\label{wtomomeganforsgreater0acclemma}
Let $n\geq 3$, $R>0$, $m>0$ and $\mu := \f{mn}{\omega_n R^n}$, and assume $w_0$ satisfies \eqref{initacc}. Moreover, let $w\in C^0((0,R^n]\times[0,\infty))\cap C^{2,1}((0,R^n]\times(0,\infty))$ be a classical solution of \ref{0wacc} in $(0,R^n)\times(0,\infty)$ fulfilling $w_s\geq 0$ in $(0,R^n)\times(0,\infty)$ as well as
\begin{align}\label{infgreaterthanzeroacc}
\inf_{s\in(0,R^n)} w(s,t_0)>0 \qquad \text{for some }t_0\geq 0.
\end{align}
Then for all $s\in(0,R^n)$,
\begin{align}\label{masstowardsorigin}
w(s,t) \to \f m{\omega_n} \qquad \text{as }t \to \infty.
\end{align}
\end{lemma}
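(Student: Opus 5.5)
Given $s_0\in(0,R^n)$ and $\eta>0$, I would apply Lemma \ref{risingsubsolsol} to obtain a time-increasing classical solution $\uw$ of \eqref{0wacc} on $[t_0,\infty)$. It satisfies $\uw_s\ge 0$, $\uw\le w$ and $\uw(s_0,t)\ge M-\eta$ with $M:=\inf_{s}w(s,t_0)>0$. The point is that $\uw$ is monotone in $t$ and bounded by $\f m{\omega_n}$, so $\uw(s,t)\nearrow W(s)$ for each $s\in(0,R^n]$ as $t\to\infty$. In addition, $W(R^n)=\f m{\omega_n}$ and $W$ is nondecreasing in $s$. Positivity of the infimum follows from $\uw(s,t)\ge \uW(s)$ for $s\ge s_0$, together with Lemma \ref{infmoninclemacc} applied to $\uw$. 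For the latter I use that $\inf_s\uw(s,t_0)$ is positive, which holds because $\uw(\cdot,t_0)$ is the minimal solution emanating from $\uW$ at time $0$. If that infimum vanished, I would instead argue that for $t>t_0$ Lemma \ref{infmoninclemacc} applied to $w$ gives $w\ge M$, and use this as $\uw$'s lower information. The cleaner route, however, is to exploit the construction directly.

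Next I identify $W$ as a steady state. By the parabolic Schauder estimates used in Lemma \ref{createdsol}, the family $\uw(\cdot,\cdot+k)$, $k\in\N$, is bounded in $C^{2+\theta,1+\f\theta2}_{loc}((0,R^n]\times[1,\infty))$. Arzelà--Ascoli then gives convergence along subsequences in $C^{2,1}_{loc}$ to a limit, which must be $W$ because the pointwise monotone limit is independent of time. Hence $W\in C^2((0,R^n))$ solves \eqref{StatWacc}, and $W\in C^0((0,R^n])$ with $W(R^n)=\f m{\omega_n}$. To invoke Lemma \ref{staticsollemma} I need $\inf_s W>0$. I would obtain this by showing $\uw(s,t)\ge c>0$ uniformly in $s$ for large $t$: since $\uw_t\ge0$, it suffices that $\uw(\cdot,t_1)$ has positive infimum for some $t_1$. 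This is where I expect the main obstacle, since $\uW(0)=0$. My first attempt is to apply Lemma \ref{estwg0acc}-type reasoning, or better Lemma \ref{infmoninclemacc} combined with monotonicity. Since $\uw\ge\uW$ with $\uW(s_0)\ge M_0$, and $\uw$ is increasing in $t$, the limit $W$ satisfies $W\ge \uW$. If $\inf W=0$, then $W$ would be a nondecreasing steady state with $W(0)=0$ and $W_s>0$ somewhere. I would rule this out by comparing with the time-increasing structure: the minimal solutions from $\uW$ dominate $\uW$, which is a strict subsolution near $0$ because the bracket in \eqref{innerregionacc} is strictly negative. Propagating this yields $W_s$ near $0$ large enough that, as in Lemma \ref{magic}, concentration is forced.

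Alternatively, and more robustly, I would avoid needing $\inf W>0$ from $\uw$ itself and instead use $w$. Set $w_\infty:=\lim_t \inf_s w(s,t)\ge M>0$, which exists by Lemma \ref{infmoninclemacc}. Then the functions $w(\cdot,\cdot+k)$ also converge along subsequences in $C^{2,1}_{loc}$ to some entire solution $\hat w$ with $\inf_s \hat w\ge M$. Combining with $\uw\le w$ and $W\le \liminf_t w$, I get the pointwise bounds
\[
W\le \liminf_{t\to\infty} w\le\limsup_{t\to\infty} w\le \f m{\omega_n}.
\]
Thus it suffices to know $W\equiv\f m{\omega_n}$. For that, I choose the construction in Lemma \ref{risingsubsolsol} with $\eta$ small, so that $\uW\ge M_0>0$ on $[s_0,R^n]$. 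The steady state $W$ is then nondecreasing, solves \eqref{StatWacc} on $(0,R^n)$, and is $\ge M_0$ on $[s_0,R^n]$. Restricting the ODE argument of Lemma \ref{staticsollemma} to $(s,R^n)$ is not enough near $0$, so I need the positivity of $\inf W$. For this I would use Lemma \ref{infmoninclemacc} applied to $\uw$ starting at a time $t_1>t_0$ where, by Lemma \ref{estwg0acc}, $\uw(0,t_1)>0$. This is possible because $\uW$ behaves like $\f{as}{s+b}$ near zero with $b=\delta s_*^2$ tiny, matching the hypothesis \eqref{initialconditiondiracmassw} once $\delta$ is taken small in \eqref{deltarestracc}. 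With $\inf W>0$, Lemma \ref{staticsollemma} gives $W\equiv W(R^n)=\f m{\omega_n}$, and the sandwich concludes \eqref{masstowardsorigin}.
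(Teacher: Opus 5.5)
Your last paragraph gives a correct proof by a different route from the paper's. The earlier attempts contain false steps and should be deleted:
\begin{itemize}
\item By construction $\uw(\cdot,t_0)=\uW$ with $\uW(0)=0$, so $\inf_s\uw(s,t_0)=0$.
\item The lower bound $w\ge M$ gives no information on $\uw\le w$.
\item The heuristic that a strict subsolution near $0$ ``forces concentration'' cannot exclude $\inf W=0$. Nothing in the paper rules out nondecreasing steady states with $W(0)=0$.
\item The entire solution $\hat w$ is never used.
\end{itemize}

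The final step needs its matching with \eqref{initialconditiondiracmassw} made explicit. Fix $\gamma\in(0,1-\frac2n)$, $\tau>0$ and $y_0:=M_0R^{-n\gamma}$, take $\delta_*$ from Lemma \ref{estwg0acc}, and in \eqref{deltarestracc} additionally require $\delta\le R^{n\gamma}\delta_*/s_*^3$. This is legitimate because $\delta$ enters Lemma \ref{staticsubsolacc} only through upper bounds (in \eqref{Wests1acc}, \eqref{Wsests1acc} and $b\le s_*^2$). The verification of $\uW\le w(\cdot,t_0)$ in Lemma \ref{risingsubsolsol} is unaffected. On $(0,s_*]$ the inequality $a(s^{2-\gamma}+\delta_*)\ge y_0s(s+b)$ then holds termwise, since $a\ge M_0=y_0R^{n\gamma}\ge y_0s^\gamma$ and $y_0sb\le y_0\delta s_*^3\le M_0\delta_*$. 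This is exactly $\uW\ge y_0\frac{s^2}{s^{2-\gamma}+\delta_*}$ there. On $[s_*,R^n]$ you have $\uW\ge M_0\ge y_0s^\gamma$. Lemma \ref{estwg0acc} then gives $t_1>t_0$ with $\uw(0,t_1)>0$. Because $\uw_t\ge0$ and $\uw_s\ge0$, this yields $W\ge\uw(0,t_1)>0$ directly, without Lemma \ref{infmoninclemacc}.

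The paper never needs a single rising solution with positive infimum. It takes $s_j\searrow0$ and applies Lemma \ref{risingsubsolsol} with $s_0=s_j$. The resulting steady states $W_j=\lim_{t\to\infty}\uw_j(\cdot,t)$ are only known to satisfy $W_j\ge c_1$ on $[s_j,R^n]$. A second compactness step $W_j\to W$ in $C^2_{loc}((0,R^n])$ produces a nondecreasing steady state with $W\ge c_1$ everywhere, to which Lemma \ref{staticsollemma} applies. The bound $W_\infty\ge W_j$ for every subsequential limit $W_\infty$ of $w(\cdot,t_k)$ finishes the proof.

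The trade-off is as follows. The paper's route stays within the comparison and steady-state arguments and does not touch the collapse machinery. Yours needs only one auxiliary solution, no limit in $j$, and your pointwise sandwich replaces the $C^0_{loc}$ compactness of $w(\cdot,t)$. The price is importing Lemma \ref{estwg0acc}, and hence Lemmas \ref{magicprep}--\ref{compfctmassacc}, and reopening the construction in Lemma \ref{staticsubsolacc}.
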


\begin{bew}
Since $w$ is bounded, parabolic standard theory (\cite{Ladyzhenskaja1968}) shows that for each $s_0 \in (0,R^n)$, there exist $\theta \in (0,1)$ and $c_1>0$ such that
\begin{align*}
\|w(\cdot,t)\|_{C^\theta([s_0,R^n])} \leq c_1 \qquad \text{for all }t>1,
\end{align*}
which due to the Arzel\`a-Ascoli theorem implies that $(w(\cdot,t))_{t>1}$ is relatively compact in $C^0_{loc}((0,R^n])$. In view of a standard argument, for the verification of \eqref{masstowardsorigin} it is thus sufficient to show that whenever $(t_k)_{k\in\N}\subset(1,\infty)$ and $W_\infty \in C^0((0,R^n])$ are such that $t_k \to \infty$ and
\begin{align}\label{wtktoWinftyacc}
w(\cdot,t_k) \to W_\infty \qquad \text{in }C_{loc}^0((0,R^n])
\end{align}
as $k\to\infty$, we have $W_\infty \equiv \f m{\omega_n}$.\\
To achieve this, using \eqref{infgreaterthanzeroacc} we fix $c_1>0$ such that $c_1<\inf_{s\in(0,R^n)} w(s,t_0)$, and taking any $(s_j)_{j\in\N}\subset(0,R^n)$ such that $s_j \searrow 0$ as $j\to\infty$, we invoke Lemma \ref{risingsubsolsol} to see that for each $j\in\N$ we can find a classical solution $\uw_j \in C^0((0,R^n]\times[t_0,\infty))\cap C^{2,1}((0,R^n]\times[t_0,\infty))$ such that $\uw_{js} \geq 0$ in $(0,R^n)\times(t_0,\infty)$, that
\begin{align}\label{uwjtgeq0acc}
\uw_{jt}(s,t)\geq 0 \qquad \text{for all }s\in(0,R^n]~\text{and}~t>t_0,
\end{align}
that
\begin{align}\label{uwjleqwacc}
\uw_j(s,t) \leq w(s,t) \qquad \text{for all }s\in(0,R^n]~\text{and}~t\geq t_0
\end{align}
and that
\begin{align}\label{uwsjgeqc1acc}
\uw_j(s_j,t) \geq c_1 \qquad \text{for all }t\geq t_0.
\end{align}
In particular, \eqref{uwjtgeq0acc} ensures that for any such $j$,
\begin{align}\label{Wjdefuwjacc}
W_j(s):=\lim_{t\to\infty} \uw_j(s,t), \qquad s\in(0,R^n],
\end{align}
is a well-defined and clearly nondecreasing function fulfilling
\begin{align}\label{0leqWjleqmomnacc}
0 \leq W_j(s) \leq W_j(R^n) = \f m{\omega_n} \qquad \text{for all }s\in(0,R^n]
\end{align}
and, by \eqref{uwsjgeqc1acc},
\begin{align}\label{Wjsjgeqc1acc}
W_j(s_j)\geq c_1.
\end{align}
Moreover, since again by parabolic standard theory we may conclude from the boundedness of $\uw_j$ that $(\uw_j(\cdot,t))_{t>1}$ is relatively compact even in $C^2_{loc}((0,R^n])$ and that $(\uw_{jt}(\cdot,t))_{t>1}$ is relatively compact in $C^0_{loc}((0,R^n])$, it follows from \eqref{Wjdefuwjacc} and \eqref{uwjtgeq0acc} that
\[
\uw_j(\cdot,t) \to W_j \quad \text{in }C^2_{loc}((0,R^n]) \quad \text{and} \quad \uw_{jt}(\cdot,t)\to 0 \quad \text{in }C^0_{loc}((0,R^n]) \qquad \text{as }t\to\infty,
\]
and that hence taking $t\to\infty$ in \eqref{0wacc} yields the identity
\begin{align}\label{Wjstatdiffeqacc}
0=n^2s^{2-\f2n}W_{jss} + n W_j W_{js} - \mu s W_{js} \qquad \text{in }(0,R^n].
\end{align}
Together with \eqref{0leqWjleqmomnacc} and a straightforward regularity argument, this can really be shown to guarantee that $(W_j)_{j\in \N}$ is bounded in $C^3_{loc}((0,R^n])$ and hence relatively compact in $C^2_{loc}((0,R^n])$, meaning that on passing to a subsequence if necessary we may assume that
\begin{align}\label{WjtoWinC2locacc}
W_j \to W \quad \text{in }C^2_{loc}((0,R^n]) \qquad \text{as }j\to\infty
\end{align}
with some nondecreasing $W\in C^2_{loc}((0,R^n])$. Due to \eqref{0leqWjleqmomnacc} and \eqref{WjtoWinC2locacc}, this limit function satisfies
\begin{align}\label{0leqWleqmomacc}
0 \leq W(s) \leq W(R^n) = \f m{\omega_n},
\end{align}
while by \eqref{Wjstatdiffeqacc} and \eqref{WjtoWinC2locacc}, it is a classical solution of
\begin{align}\label{Wstatdiffeqacc}
0=n^2 s^{2-\f2n}W_{ss} + n W W_s - \mu s W_s \qquad \text{in }(0,R^n].
\end{align}
In addition,
\begin{align}\label{Wgeqc1everywhereacc}
W(s) \geq c_1 \qquad \text{for all }s\in(0,R^n],
\end{align}
because for each $j_0 \in \N$ and $j\in\N$ with $j\geq j_0$, \eqref{Wjsjgeqc1acc} along with the monotonicity of $W_j$ for all $j\in\N$ warrants that actually
\[
W_j(s) \geq W_j(s_j) \geq c_1 \qquad \text{for all }s\in[s_j,R^n] \supset [s_{j_0},R^n],
\]
which on taking $j\to \infty$ yields $W\geq c_1$ on $[s_{j_0},R^n]$ for all $j_0\in \N$ due to \eqref{WjtoWinC2locacc}, and which thereby implies \eqref{Wgeqc1everywhereacc} in the limit $j_0\to\infty$.\\
According to Lemma \ref{staticsollemma}, however, this latter property \eqref{Wgeqc1everywhereacc} when combined with \eqref{0leqWleqmomacc} and \eqref{Wstatdiffeqacc} enforces that
\[
W(s)=\f m{\omega_n} \qquad \text{for all }(0,R^n],
\]
so that since for all $j\in\N$ we know from \eqref{uwjleqwacc}, \eqref{wtktoWinftyacc} and \eqref{Wjdefuwjacc} that
\[
W_\infty(s) \geq W_j(s) \qquad \text{for all }s\in(0,R^n],
\]
and since thus \eqref{WjtoWinC2locacc} entails that
\[
W_\infty(s)\geq W(s) \qquad \text{for all }s\in(0,R^n],
\]
it follows from \eqref{0wacc} and the evident nondecrease of $W_\infty$ on $(0,R^n]$ that indeed
\[
\f m{\omega_n} \leq W_\infty(s) \leq W_\infty(R^n) = \f m{\omega_n},
\]
as desired.
\end{bew}

An iterative application of the argument from Lemma \ref{strictmonotonacclemma} reveals that, asymptotically, the entire mass not only concentrates near but enters into the origin.

\begin{lemma}\label{massaccumulatestotallyacc}
Let $n\geq 3$, $R>0$, $m>0$ and $\mu := \f{mn}{\omega_n R^n}$, suppose $w_0$ satisfies \eqref{initacc}, and assume that $w\in C^0((0,R^n]\times[0,\infty))\cap C^{2,1}((0,R^n)\times(0,\infty))$ is a nonnegative classical solution of \eqref{0wacc} with $w_s\geq 0$ in $(0,R^n)\times(0,\infty)$, and presume that
\begin{align*}
\inf_{s\in(0,R^n)} w(s,\bar t) > 0 \qquad \text{for some }\bar t>0.
\end{align*}
Then
\begin{align}\label{massasymptoticallyaggregatedacc}
\inf_{s\in(0,R^n)} w(s,t) \to \f{m}{\omega_n} \qquad \text{as }t\to\infty.
\end{align}
\end{lemma}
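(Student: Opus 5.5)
We argue by contradiction, mirroring the proof of Lemma \ref{strictmonotonacclemma}. By Lemma \ref{infmoninclemacc} and Lemma \ref{strictmonotonacclemma}, $a(t):=\inf_{s\in(0,R^n)}w(s,t)$ is nondecreasing on $[\bar t,\infty)$, positive and bounded by $\f m{\omega_n}$. Hence $a(t)\nearrow a_\infty\in(0,\f m{\omega_n}]$ as $t\to\infty$. Suppose that $a_\infty<\f m{\omega_n}$. We then aim to show that, after some late time, $a$ gains a fixed increment $\eta_*>0$ within one time window. Since this can be repeated indefinitely, it contradicts boundedness.

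The key step is to make the construction of Lemma \ref{strictmonotonacclemma} uniform in time. Fix $\tau=1$ and apply Lemma \ref{functionpsidiffineqpluginfct} with $a_0$ replaced by the fixed positive number $a(\bar t)$. This choice is legitimate, since every $a(t)$ with $t\ge\bar t$ is at least $a(\bar t)$, and the inequality \eqref{psidiffineqacc} only improves when $a_0$ increases (because $\psi_s\ge0$). The lemma yields $s_*$, and we fix some $s_0\in(0,s_*]$ together with $\tau_0\le 1$ and the associated $\psi$, all independent of the starting time $t_1\ge\bar t$. In Lemma \ref{strictmonotonacclemma}, the gain at $s\le s_0$ after time $\tau_0$ was $\uvp(\f{s_0}2)-a(t_1)$. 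This quantity depends on the profile $w(\cdot,t_1)$ and could degenerate as $t_1\to\infty$. To prevent this, we use Lemma \ref{wtomomeganforsgreater0acclemma}: since $w(s,t)\to\f m{\omega_n}$ for each $s>0$, and $w$ is monotone in $s$, we get $w(\f{s_0}4,t)\ge \f m{\omega_n}-\sigma$ for all $t\ge t_1$ with $t_1$ large. Here we choose $\sigma:=\f12(\f m{\omega_n}-a_\infty)$. The profile $w(\cdot,t_1)$ therefore rises from $a(t_1)\le a_\infty$ to at least $a_\infty+\sigma$ already on $[0,\f{s_0}4]$.

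Next we construct the subsolution directly rather than through Lemma \ref{uvpacclemma}. We take a convex, increasing $\uvp$ on $[0,s_0]$ with $\uvp(0)=a(t_1)$ that lies below $w(\cdot,t_1)$ on $(0,s_0]$ and satisfies $\uvp(s_0)\le w(s_0,t)$ for all $t\in[t_1,t_1+\tau_0]$. We also require $\uvp(\f{s_0}2)\ge a(t_1)+\eta_*$, where $\eta_*>0$ depends only on $\sigma$, $s_0$ and $a_\infty$. Our first attempt is a piecewise linear-then-smoothed function of the form $a(t_1)+\kappa\,\xi^2$ with $\kappa$ fixed. Then $\uvp(\f{s_0}2)-a(t_1)=\kappa s_0^2/4$, which is independent of $t_1$. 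To keep $\uvp< w(\cdot,t_1)$ near $\xi=0$, we use the fact that $w(\xi,t_1)>a(t_1)$ for $\xi>0$ by Lemma \ref{minimalsolacc}. However, $w(\cdot,t_1)-a(t_1)$ could vanish faster than quadratically at $0$, so the quadratic profile alone may not stay below $w$ there. We handle this by taking $\uvp:=\min\{\text{quadratic},\,\text{output of Lemma \ref{uvpacclemma}}\}$ only on a tiny interval near $0$, smoothed convexly. The value at $\f{s_0}2$ is still governed by the quadratic part, because $w(\cdot,t_1)\ge a_\infty+\sigma$ on $[\f{s_0}4,s_0]$ provides room of size $\sigma$ there. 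This uniform lower bound on the gain is the main obstacle. A possible alternative is to replace the quadratic by a gluing, at $\xi=\f{s_0}4$, of the Lemma \ref{uvpacclemma} function with a fixed convex arc reaching height $a(t_1)+\sigma$; the gap $\sigma$ makes such a convex gluing possible.

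Finally, $\uw:=\uvp\circ\psi$ is a subsolution exactly as in \eqref{psidiffineqfinalacc}, and Lemma \ref{halfcomparison} gives $w(s,t_1+\tau_0)\ge\uvp(\f{s_0}2)\ge a(t_1)+\eta_*$ for $s\in(0,s_0]$. For $s\ge s_0$ the same bound follows from monotonicity of $w$ in $s$. Hence $a(t_1+\tau_0)\ge a(t_1)+\eta_*$ for all large $t_1$. Iterating this $k$ times gives $a\ge a(t_1)+k\eta_*$, which exceeds $\f m{\omega_n}$ for large $k$. This contradiction proves \eqref{massasymptoticallyaggregatedacc}.
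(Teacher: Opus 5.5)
Your overall architecture matches the paper's. You use a single $\psi$ from Lemma \ref{functionpsidiffineqpluginfct} built with the fixed level $a(\bar t)$; this is legitimate, as you note, because $\psi_s\ge 0$. You use Lemma \ref{wtomomeganforsgreater0acclemma} to make $w(\frac{s_0}4,t)$ close to $\frac m{\omega_n}$ for all late $t$. You then apply the subsolution $\uvp\circ\psi$ and Lemma \ref{halfcomparison} on windows of length $\tau_0$; the lower bound $w\ge a(t_1)$ needed there comes from Lemma \ref{infmoninclemacc}. Arguing by contradiction with $\sigma=\frac12(\frac m{\omega_n}-a_\infty)$, instead of the paper's explicit recursion, is only cosmetic.

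The gap is exactly at the step you call the main obstacle: constructing $\uvp$ with a gain $\eta_*$ independent of $t_1$. Your first variant fails for two reasons.
\begin{itemize}
\item A quadratic $a(t_1)+\kappa\xi^2$ with fixed $\kappa$ need not lie below $w(\cdot,t_1)$ anywhere on $(0,\frac{s_0}4)$. You only know that $w(\cdot,t_1)$ has reached $a_\infty+\sigma$ by $\frac{s_0}4$, not where it rises, so the problem is not confined to a tiny interval near $0$.
\item Where a Lemma \ref{uvpacclemma} function hands over to a convex function that it crosses from below, the minimum has a concave corner. So it is not convex, and ``smoothed convexly'' is not a specified operation.
\end{itemize}
The gluing variant is the viable one, but as stated it is not justified. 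A $C^1$ convex continuation beyond $\frac{s_0}4$ has slope at least $\uvp'(\frac{s_0}4)$. The statement of Lemma \ref{uvpacclemma} bounds only $\uvp(b)$, not $\uvp'(b)$, so the arc could overshoot $a_\infty+\sigma$ before $s_0$. This is repairable: glue at $\frac{s_0}8$, where convexity bounds the slope by $\frac{8\eta}{s_0}$, and continue with the tangent line, which stays below $\uvp$ and hence below $w(\cdot,t_1)$ up to $\frac{s_0}4$.

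The idea you are missing, which makes the obstacle disappear, is that $\uvp$ need neither rise immediately nor lie strictly below $w(\cdot,t_1)$. Lemma \ref{halfcomparison} only asks for $\uvp(\psi(\cdot,t_1))\le w(\cdot,t_1)$ and $\uvp(0)\le a(t_1)$. The computation \eqref{psidiffineqfinalacc} only uses $\uvp'\ge 0$, $\uvp''\ge 0$ and $\uvp\ge a(\bar t)$. Hence you may take $\uvp\equiv a(t_1)$ on $[0,\frac{s_0}4]$, which is admissible because $w(\cdot,t_1)\ge a(t_1)$, and $\uvp(\xi)=a(t_1)+(\frac4{3s_0})^3(a_\infty+\sigma-a(t_1))(\xi-\frac{s_0}4)^3$ on $[\frac{s_0}4,s_0]$. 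This function:
\begin{itemize}
\item is $C^2$, nondecreasing and convex;
\item stays below $a_\infty+\sigma=\frac m{\omega_n}-\sigma\le w(\frac{s_0}4,t)\le w(s,t)$ for $s\ge\frac{s_0}4$ and $t\ge t_1$;
\item gives $\uvp(\frac{s_0}2)-a(t_1)=\frac1{27}(a_\infty+\sigma-a(t_1))\ge\frac{\sigma}{27}$, which is your uniform $\eta_*$.
\end{itemize}
Neither Lemma \ref{uvpacclemma} nor $w_s>0$ is needed. This is exactly the paper's profile \eqref{uvpadefacccomplete}. The paper iterates it directly as $a_{k+1}=a_k+\frac1{27}(\frac m{\omega_n}-\eps-a_k)$ on the windows $[t_0+k\tau_0,t_0+(k+1)\tau_0)$ and then concludes via Lemma \ref{infmoninclemacc}.
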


\begin{bew}
Writing 
\begin{align}\label{a0defindiraccompleteacc}
a_0:= \inf_{s\in(0,R^n)} w(s,\bar t), 
\end{align}
For any $t_* \geq 0$, Lemma \ref{functionpsidiffineqpluginfct} guarantees the existence of $\tau_0 \in (0,1]$, $s_0=s_0(n,\mu,a_0) \in(0,\f{R^n}2)$ and some discrete set $N(t)\subset [0,s_0]$ for each $t\in[t_*,t_*+\tau_0)$ such that for 
\[
\mathcal G_{t_*}:=\{(s,t)\in[0,s_0]\times[t_*,t_*+\tau_0):~s \notin N(t)\},
\]
there is a function $\psi_{t_*} \in C^1([0,s_0]\times[t_*,t_*+\tau_0))\cap C^2(\mathcal G_{t_*})$ satisfying \eqref{psisislocallyboundedacc} -- \eqref{limpsigeqs02acc}.\\
We may then fix any $\eps\in(0,\f{m}{\omega_n}-a_0)$ and draw upon Lemma \ref{wtomomeganforsgreater0acclemma} to pick $t_0 \geq \bar t$ such that
\begin{align}\label{ws04greatenoughacc}
w\Big(\f{s_0}4,t\Big) \geq \f{m}{\omega_n}-\eps \qquad \text{for all }t\geq t_0.
\end{align}
Furthermore, we define a family of functions $(\uvp_a)_{a\in(0,\f{m}{\omega_n}-\eps)} \subset C^2([0,s_0])$ via
\begin{align}\label{uvpadefacccomplete}
\uvp_a (s) = \begin{cases}
a, \qquad &0\leq s \leq \f{s_0}4,\\
a + (\f4{3s_0})^3(\f{m}{\omega_n}-\eps-a)(s-\f{s_0}4)^3, \qquad & \f{s_0}4 < s \leq s_0,
\end{cases}
\end{align}
which satisfies
\begin{align}\label{uvpaspatialboundariesacc}
\uvp_a(0)=a \quad \text{and} \quad \uvp_a(s_0)=\f{m}{\omega_n}-\eps
\end{align}
as well as
\begin{align}\label{uvpaprime12geq0acc}
\uvp_a'(s) \geq 0 \quad \text{and} \quad \uvp_a''(s) \geq 0 \qquad \text{for all }s \in [0,s_0].
\end{align}
Now we recursively define a sequence $(a_k)_{k\in\N_0}$ with $a_0$ as in \eqref{a0defindiraccompleteacc} and
\begin{align}\label{akdefacccomplete}
a_{k+1}:=\uvp_{a_k}\Big(\f{s_0}2\Big)=a_k+\f1{27}\Big(\f{m}{\omega_n}-\eps-a_k\Big),
\end{align}
which can easily be seen to exhibit the asymptotic behavior $a_k \nearrow \f{m}{\omega_n}-\eps$ as $k\to\infty$.\\
In turn, writing $t_k:=t_0+k\tau_0$ and $\psi_k:=\psi_{t_k}$ for $k\in \N_0$, we establish
\begin{align*}
\uw_k(s,t):= \uvp_{a_k}(\psi_k (s,t)), \qquad (s,t)\in [0,s_0]\times[t_k,t_{k+1}),
\end{align*}
and observe that accordingly, $\uw_k$ lies in $C^1([0,s_0]\times[t_k,t_{k+1}))\cap C^2(\mathcal G_{t_k})$ and, as $\psi_{ks}\geq 0$ by \eqref{psisislocallyboundedacc}, satisfies
\begin{align}\label{psidiffineqfinal2acc}
\uw_{kt} - n^2s^\alpha \uw_{kss} - n \uw_k \uw_{ks} + \mu s \uw_{ks} &= \uvp_{a_k}' \cdot \psi_{kt} - n^2 s^\alpha(\uvp_{a_k}'' \cdot \psi_{ks}^2 + \uvp_{a_k}' \cdot \psi_{kss}) - n \uvp_{a_k}\cdot\uvp_{a_k}'\cdot \psi_{ks} + \mu s \uvp_{a_k}' \cdot \psi_{ks} \notag\\
&\leq \uvp_{a_k}' \cdot \Big\{ \psi_{kt} - n^2 s^\alpha \psi_{kss} - n \uvp_{a_k} \psi_{ks} + \mu s\psi_{ks} \Big\} \notag\\
&\leq \uvp_{a_k}' \cdot \Big\{ \psi_{kt} - n^2 s^\alpha \psi_{kss} - n a_k \psi_{ks} + \mu s\psi_{ks} \Big\} \notag\\
&\leq \uvp_{a_k}' \cdot \Big\{ \psi_t - n^2 s^\alpha \psi_{kss} - (n a_0 - \mu s)\psi_{ks} \Big\} \notag\\
&\leq 0
\end{align}
in $\mathcal G_{t_k}$ on account of \eqref{uvpaprime12geq0acc}, \eqref{uvpaspatialboundariesacc} and \eqref{psidiffineqacc}.\\
Once more utilizing \eqref{psisislocallyboundedacc}, $\uw_{ks} = \uvp_{a_k}' \cdot \psi_{ks}$ is bounded locally in $[0,s_0]\times[t_k,t_{k+1})$ for each $k\in\N_0$.\\
By \eqref{psi0at0leqs0ats0acc} and \eqref{uvpaspatialboundariesacc}, we verify
\begin{align}\label{uwkisakat0acc}
\uw_k(0,t)=\uvp_{a_k}(\psi_k(0,t))=\uvp_{a_k}(0)=a_k \qquad \text{for all }t\in[t_k,t_{k+1}),
\end{align}
and, additionally resorting to the monotonicity of $\uvp_{a_k}$ and \eqref{ws04greatenoughacc}, we may infer that
\begin{align}\label{uwkissmallerats0acc}
\uw_k(s_0,t) &= \uvp_{a_k}(\psi_k(s_0,t)) \notag\\
&\leq \uvp_{a_k}(s_0) \notag\\
&= \f{m}{\omega_n}-\eps \notag\\
&\leq w\Big(\f{s_0}4,t\Big) \notag\\
&\leq w(s_0,t) \qquad \text{for all }t\in[t_k,t_{k+1}).
\end{align}
Now by induction, we assume that $w(s,t_k) \geq a_k$ for all $s\in(0,s_0]$; for $k=0$, this holds due to \eqref{a0defindiraccompleteacc} and $t_0 \geq \bar t$.\\
Then, again in light of the monotonicity of $\uvp_{a_k}$, \eqref{psiissmallersinitiallyacc}, \eqref{uvpadefacccomplete} and \eqref{ws04greatenoughacc}, it follows that
\begin{align}\label{uwissmallerinitially2acc}
\uw_k(s,t_k)&=\uvp_{a_k}(\psi_k(s,t_k)) \notag\\
&\leq \uvp_{a_k}(s) \notag\\
&\leq w(s,t_k) \qquad \text{for all }s\in(0,s_0].
\end{align}
Given \eqref{psidiffineqfinal2acc}, \eqref{uwkisakat0acc}, \eqref{uwkissmallerats0acc}, \eqref{uwissmallerinitially2acc}, the fact that $\uw_{ks}$ is bounded locally in time, and the discreteness of the set $N(t)$ for each $t\in[t_k,t_{k+1})$ in the definition of $\mathcal G_{t_k}$, we may invoke the comparison principle Lemma \ref{halfcomparison} to deduce that
\begin{align*}
\uw_k(s,t) \leq w(s,t) \qquad \text{for all }s\in(0,s_0]~\text{and}~t\in[t_k,t_{k+1}).
\end{align*}
In conjunction with the continuity of $w(s,\cdot)$ for each $s\in(0,s_0]$ and \eqref{limpsigeqs02acc}, we may deduce that
\begin{align*}
w(s,t_{k+1}) &= \lim_{t \nearrow t_{k+1}} w(s,t) \\
&\geq \lim_{t \nearrow t_{k+1}} \uw_k(s,t) \\
&= \lim_{t \nearrow t_{k+1}} \uvp_{a_k}(\psi_k(s,t)) \\
&= \uvp_{a_k}\Big(\lim_{t \nearrow t_{k+1}}\psi_k(s,t)\Big) \\
&\geq \uvp_{a_k}\Big(\f{s_0}2\Big) \qquad \text{for all }s\in(0,s_0].
\end{align*}
As $\uvp_{a_k}(\f{s_0}2)=a_{k+1}$ by \eqref{akdefacccomplete}, this results in
\begin{align*}
w(s,t_{k+1}) \geq a_{k+1} \qquad \text{for all }s\in(0,s_0].
\end{align*}
Therefore, together with the monotonicity of $t \mapsto \inf_{s\in(0,R^n)} w(s,t)$ asserted by Lemma \ref{infmoninclemacc}, we may infer that
\begin{align*}
\lim_{t \to \infty} \inf_{s\in(0,R^n)} w(s,t) &= \lim_{k \to \infty} \inf_{s\in(0,R^n)} w(s,t_k) \\
&\geq \lim_{k \to \infty} a_k \\
&= \f{m}{\omega_n}-\eps.
\end{align*}
Since $\eps>0$ was arbitrary and $w(s,t) \leq \f{m}{\omega_n}$ for all $(s,t)\in(0,R^n]\times[0,\infty)$, this readily entails \eqref{massasymptoticallyaggregatedacc}.
\end{bew}

\section{Proof of main results}

Combining the preceding lemmata yields the propositions from Subsection \ref{subpropos}.

\begin{proof}[\textbf{Proof of Proposition~\ref{prop1acc}}:]
Lemma \ref{createdsol} guarantees the existence of a nonnegative classical solution $w\in C^0((0,R^n]\times [0,\infty)) \cap C^{2,1}((0,R^n]\times(0,\infty))$ of \eqref{0wacc}, whereas Lemma \ref{minimalsolacc} asserts that this solution is minimal in the sense that
\[
\tilde w \geq w \qquad \text{in }(0,R^n]\times[0,\infty)
\]
for all nonnegative classical solutions $\tilde w$ to \eqref{0wacc}, and that $w_s>0$ in $(0,R^n)\times(0,\infty)$. Furthermore, Lemma \ref{rightcontinuityacc} and Lemma \ref{infmoninclemacc} establish the right-continuity and increasing monotonicity of $t \mapsto w(0,t)=\inf_{s\in(0,R^n)}w(s,t)$, respectively.\\
If additionally
\begin{align*}
\uc \cdot \f{s^2}{s^{2-\gamma_1}+\delta} \leq w_0(s) \leq \oc \cdot s^{\gamma_2} \qquad \text{for all }s\in[0,R^n],
\end{align*}
for $\delta>0$ as in Corollary \ref{corslowcollapseacc}, the corollary yields
\begin{align*}
0<w(0,t_0)<\eta \qquad \text{for some }t_0\in (0,\tau).
\end{align*}
\end{proof}

\begin{proof}[\textbf{Proof of Proposition~\ref{prop2acc}}:]
The strict monotonicity is a direct consequence of Lemma \ref{strictmonotonacclemma}, while the convergence property has been established in Lemma \ref{massaccumulatestotallyacc}.
\end{proof}

The final task in proving Theorem \ref{maintheoremacc} is to construct a measure-valued solution of \eqref{KS} from $w$; all asserted properties of this solution then follow immediately from the preceding propositions.

\begin{proof}[\textbf{Proof of Theorem~\ref{maintheoremacc}}:]
Let $w\in C^0((0,R^n]\times [0,\infty)) \cap C^{2,1}((0,R^n]\times(0,\infty))$ be a classical nonnegative solution of \eqref{0wacc} with $w_0$ as in \eqref{w0acc} satisfying $w_s\geq0$ in $(0,R^n]\times(0,\infty)$.\\
For $t\geq 0$ and $r:=|x|$, define $u(t)\in \mathcal M_+(\bom)$ as a radially symmetric measure via
\begin{align}\label{udefmeasureacc}
\begin{cases}
u(t)(B_r(0)):= \omega_n w(r^n,t), \qquad r\in(0,R],\\
u(t)(\{0\}):= \omega_n w(0,t),
\end{cases}
\end{align}
and let $v\in L_{loc}^1([0,\infty);W^{1,1}(\Om))\cap C^{1,0}((\bom\setminus\{0\})\times(0,\infty))$ be such that $\io v(\cdot,t)=0$ for all $t>0$ and
\[
\nabla v(x,t) = r^{1-n}(w(r^n,t)- \f{\mu}{n}r^n) \cdot \f{x}{r}, \qquad x\in\bom\setminus\{0\},~t>0.
\]
Since $n \geq 2$, the set $\bom\setminus\{0\}$ is connected and therefore $v$ is uniquely determined even without assuming radial symmetry a priori.\\
For any radially symmetric function $\psi \in C^\infty(\bom)$, with $\psi(x)=\Psi(r)$ we may calculate $\nabla \psi(x)= \Psi^\prime (r) \cdot \f{x}{r}$ and therefore via integration by parts receive
\begin{align}\label{vpsitestacc}
\int_{\bom} \nabla v \cdot \nabla \psi dx &= \omega_n \int_0^R v_r(r,t) \cdot \Psi^\prime(r) \cdot r^{n-1} dr \notag\\
&=\omega_n \int_0^R r^{1-n}(w(r^n,t)- \f{\mu}{n}r^n) \cdot \Psi^\prime(r) \cdot r^{n-1} dr \notag\\
&=\omega_n \int_0^R (w(r^n,t)- \f{\mu}{n}r^n) \cdot \Psi^\prime(r) dr \notag\\
&=\omega_n\cdot\Big[(w(R^n,t)-\f{\mu}{n}R^n)\Psi(R)-w(0,t)\cdot \Psi(0)\Big]-\omega_n \int_0^R (w(r^n,t)-\f{\mu}{n}r^n)\Psi(r)dr \notag\\
&= -\omega_n w(0,t) \cdot \Psi(0) - \omega_n \int_0^R (nr^{n-1} w_s(r^n,t) - \mu r^{n-1})\Psi(r)dr
\end{align}
as $w(R^n,t)=\f{m}{\omega_n}=\f{\mu}{n}R^n$, whereas on the other hand
\begin{align}\label{iomupsitestacc}
\int_{\bom} \mu \psi dx = \mu \omega_n \int_0^R \Psi(r)\cdot r^{n-1} dr
\end{align}
and
\begin{align}\label{ioupsitestacc}
\int_{\bom} \psi du(t) = \omega_n w(0,t) \Psi(0) + \omega_n \int_0^R \Psi(r) \cdot nr^{n-1} w_s(r^n,t) dr,
\end{align}
whence combining \eqref{vpsitestacc}, \eqref{iomupsitestacc} and \eqref{ioupsitestacc} implies that
\[
\int_{\bom} \nabla v \cdot \nabla \psi dx - \int_{\bom} \mu \psi dx + \int_{\bom} \psi du(t) = 0,
\]
confirming \eqref{vdistreqacc}. 
The representation 
\begin{align}\label{upartingproofacc}
u(t)=\theta(t)\delta_0 + \rho(\cdot,t) dx, \qquad t\geq 0,
\end{align}
with 
\[
\theta(t)=\omega_n w(0,t) \quad \text{for }t\geq 0 \quad \text{and} \quad \rho(x,t)= nw_s(r^n,t) \quad \text{for }(x,t)\in\bom\times[0,\infty)
\]
follows directly from \eqref{udefmeasureacc} and \eqref{0wacc}. The asserted regularity classes $\theta \in L^\infty((0,\infty))$ and $\rho \in C^{1,1}((\bom\setminus\{0\})\times[0,\infty))$ are thus a consequence of the boundedness and regularity of $w$, respectively.\\
Hence we may conclude that $(u,v)$ defines a mass-conserving radially symmetric measure-valued solution of \eqref{KS}, and thus by definition, mass-conserving radially symmetric measure-valued solutions of \eqref{KS} correspond to classical solutions of \eqref{0wacc}. In particular, minimality of $w$ as for the solution in Proposition \ref{prop1acc} directly translates to minimality of $u$. Notably, $\rho \in C^0((\bom\setminus\{0\})\times[0,\infty))$ for the minimal solution derives from the classical solvability of \eqref{KS} in $\Om\times(0,T_{max})$ for some $T_{max}>0$.\\
The nonnegativity of $\theta$ and $\rho$ follow directly from that of $w$ and $w_s$, respectively, whereas the monotonicity and right-continuity of $\theta=w(0,\cdot)$ have been established in Proposition \ref{prop1acc}.\\
For the next part, we first demonstrate that for $\gamma\in(0,1-\f2n)$, $w_0$ as in \eqref{w0acc} satisfies
\begin{align*}
w_0(s) &= \int_0^{s^\frac{1}{n}} \rho^{n-1} u_0(\sigma) d\sigma \\
&\leq \f1n \|u_0\|_{L^\infty(\Om)}\cdot s \\
&\leq \f1n \|u_0\|_{L^\infty(\Om)}R^{n(1-\gamma)}\cdot s^\gamma, \qquad s\in [0,R^n],
\end{align*}
that is $w_0(s)\leq \oc s^\gamma$ with $\oc := \f1n \|u_0\|_{L^\infty(\Om)}R^{n(1-\gamma)}$ and thus the inequality on the right-hand side of \eqref{prop1accw0bounds}. Further given $c>0$, $\tau>0$ and $\eta>0$, we may once more invoke Proposition \ref{prop1acc} with $\eta'=\f\eta{\omega_n}$ to assert the existence of $\delta>0$ such that if
\[
w_0(s) \geq c \cdot \f{s^2}{s^{2-\gamma}+\delta} \qquad \text{for all }s\in[0,R^n],
\]
then for some $t_0\in(0,\tau)$,
\[
0<\theta(t_0)<\eta.
\]
Finally, the strict monotonicity of $\theta$ in $[t_0,\infty)$ and the convergence property are immediate consequences of Proposition \ref{prop2acc}.
\end{proof}

\section*{Acknowledgements}
The author acknowledges support of the {\em Deutsche Forschungsgemeinschaft} in the context of the project
  {\em Fine structures in interpolation inequalities and application to parabolic problems}, project number 462888149.

\section*{Disclosure of interest}
The author reports there are no competing interests to declare.

\footnotesize{
\setlength{\bibsep}{3pt plus 0.5ex}
\bibliography{mass_acc}
\bibliographystyle{abbrvurl}
}

\end{document}